\providecommand{\abs}[1]{\left\lvert#1\right\rvert}
\providecommand{\norm}[2][]{\left\lVert#2\right\rVert_{#1}}
\DeclareMathOperator{\Diag}{Diag}
\DeclareMathOperator{\sgn}{sgn}
\DeclareMathOperator{\var}{var}
\newcommand{\0}{\mathbf{0}}
\newcommand{\mbb}{\mathbf{b}}
\newcommand{\mbQ}{\mathbf{Q}}
\newcommand{\mbf}{\mathbf{f}}
\newcommand{\mbc}{\mathbf{c}}
\newcommand{\mbI}{\mathbf{I}}
\newcommand{\mbx}{\mathbf{x}}
\newcommand{\mbi}{\mathbf{i}}
\newcommand{\mbe}{\mathbf{e}}
\newcommand{\mbmu}{\boldsymbol{\mu}}
\newcommand{\mbg}{\mathbf{g}}
\newcommand{\mbD}{\mathbf{D}}
\newcommand{\mbV}{\mathbf{V}}
\newcommand{\mbLambda}{\boldsymbol{\Lambda}}
\newcommand{\mbDelta}{\boldsymbol{\Delta}}
\newcommand{\mbU}{\mathbf{U}}
\newcommand{\mbPsi}{\boldsymbol{\Psi}}
\newcommand{\mbv}{\mathbf{v}}
\newcommand{\mbS}{\mathbf{S}}
\newcommand{\mbz}{\mathbf{z}}
\newcommand{\mbQt}{\widetilde{\mathbf{Q}}}
\newcommand{\mbct}{\widetilde{\mathbf{c}}}
\newcommand{\mbDeltat}{\widetilde{\boldsymbol{\Delta}}}
\newcommand{\mbUt}{\widetilde{\mathbf{U}}}
\newcommand{\lambdab}{\overline{\lambda}}
\newcommand{\mcE}{\mathcal{E}}
\newcommand{\eff}{\mathrm{eff}}
\newcommand{\dd}{\mathrm{dd}}
\newcommand{\na}{\mathrm{na}}
\begin{document}

\title{On two relaxations of quadratically-constrained cardinality minimization%\thanks{Grants or other notes
%about the article that should go on the front page should be
%placed here. General acknowledgments should be placed at the end of the article.}
}
%\subtitle{Do you have a subtitle?\\ If so, write it here}

%\titlerunning{Short form of title}        % if too long for running head

\author{Dennis Wei}

%\authorrunning{Short form of author list} % if too long for running head

\institute{D. Wei \at
              Department of Electrical Engineering and Computer Science, University of Michigan, 1301 Beal Avenue, Ann Arbor, MI 48109, USA \\
%              Tel.: +123-45-678910\\
%              Fax: +123-45-678910\\
              \email{dlwei@eecs.umich.edu}           %  \\
%             \emph{Present address:} of F. Author  %  if needed
%           \and
%           S. Author \at
%              second address
}

\date{Received: \today / Accepted: date}
% The correct dates will be entered by the editor

\maketitle

\begin{abstract}
This paper considers a quadratically-constrained cardinality minimization problem with applications to digital filter design, subset selection for linear regression, and portfolio selection.  Two relaxations are investigated: the continuous relaxation of a mixed integer formulation, and an optimized diagonal relaxation that exploits a simple special case of the problem.  For the continuous relaxation, an absolute upper bound on the optimal cost is derived, suggesting that the continuous relaxation tends to be a relatively poor approximation.  In computational experiments, diagonal relaxations often provide stronger bounds than continuous relaxations and can greatly reduce the complexity of a branch-and-bound solution, even in instances that are not particularly close to diagonal.  Similar gains are observed with respect to the mixed integer programming solver CPLEX.  Motivated by these results, the approximation properties of the diagonal relaxation are analyzed.  In particular, bounds on the approximation ratio are established in terms of the eigenvalues of the matrix defining the quadratic constraint, and also in the diagonally dominant and nearly coordinate-aligned cases.  
\keywords{Cardinality minimization \and Mixed integer quadratic programming \and Relaxation methods \and Subset selection \and Portfolio optimization}
% \PACS{PACS code1 \and PACS code2 \and more}
\subclass{90C11 \and 90C57 \and 90C59}
\end{abstract}

\section{Introduction}
\label{sec:intro}

This paper considers the problem of minimizing the cardinality of a vector $\mbx \in \mathbb{R}^{N}$ subject to a single convex quadratic constraint:
\begin{equation}\label{eqn:sparseProb}
\min_{\mbx} \quad C(\mbx) \qquad \text{s.t.} \qquad (\mbx - \mbc)^T \mbQ (\mbx - \mbc) \leq \gamma,
\end{equation}
where $C(\mbx)$ is the number of nonzero components of $\mbx$, $\mbQ$ is a positive definite matrix, and $\gamma$ is a positive scalar.  Geometrically, problem \eqref{eqn:sparseProb} corresponds to finding a point of minimal cardinality in an ellipsoid, denoted as $\mcE_{\mbQ}$, centered at the point $\mbc$.  The orientation and relative lengths of the ellipsoid axes are determined by the eigenvectors and eigenvalues of $\mbQ$ while $\gamma$ determines its absolute size.  %We will make reference to this ellipsoidal interpretation in Section \ref{sec:lowerBounds}.

The author's interest in \eqref{eqn:sparseProb} stems from the design of digital filters in signal processing (see \cite{wei2012a,wei2012} and the references therein).  In this context, $\mbx$ represents a vector of filter coefficients and cardinality minimization is motivated by the fact that the cost of implementing a filter is often dominated by arithmetic operations, especially in hardware.  The quadratic constraint represents a requirement on filter performance, for example a specified fidelity in approximating a desired frequency response or a bound on recovery error in the equalization of communication channels.

Problem \eqref{eqn:sparseProb} also has applications to subset selection for linear regression \cite{miller2002,das2008}, more specifically the overdetermined case in which $\mbQ$ is positive definite and less so the underdetermined case in which $\mbQ$ is rank-deficient and control of the cardinality is employed as a regularization.  A similar problem arises in optimal linear-quadratic control with cardinality-constrained input \cite{gao2011} (see also \cite{lin2012} for optimal control with sparse state-feedback gains).  A problem related to \eqref{eqn:sparseProb} has been studied extensively in cardinality-constrained financial portfolio optimization \cite{bertsimas2009,bienstock1996,bonami2009,cui2012,frangioni2006,gunluk2010,shaw2008,vielma2008}.  The portfolio optimization problem however has additional linear constraints, most notably non-negativity, upper bounds on nonzero variables, and sometimes lower bounds as well.  There is some computational evidence \cite{bertsimas2009} to suggest that the relative lack of constraints in \eqref{eqn:sparseProb} increases the difficulty of the problem, at least when approached using conventional integer optimization methods. %It is unclear whether the additional structure imposed by these linear constraints increases or alleviates the difficulty of the resulting discrete optimization.

Certain cases of \eqref{eqn:sparseProb} are known to be efficiently solvable, the simplest of which is the case of diagonal $\mbQ$.  Extensions to block-diagonal, tridiagonal, and well-conditioned $\mbQ$ are discussed in \cite{wei2012a}.  The authors of \cite{das2008} present polynomial algorithms for several additional cases, including an FPTAS for the general banded case and exact algorithms for the cases of a tree-structured covariance graph, a large independent set (``arrow''-structured $\mbQ$), and exponential decay in the entries of $\mbQ$ away from the diagonal.  The case in which nearly all of the eigenvalues of $\mbQ$ are identical and larger than the rest is treated in \cite{gao2012}.  

In the general case, \eqref{eqn:sparseProb} is a difficult combinatorial optimization problem.
% for which no polynomial-time algorithm is known.  
Several heuristics such as forward and backward greedy selection can be used, often with good results (see e.g.~\cite{wei2012a}, also \cite{zheng2012} for references on portfolio selection heuristics).  Although approximation guarantees do exist for forward selection in the near-diagonal case \cite{das2008} and for backward selection when a (difficult to evaluate) threshold test is met \cite{couvreur2000}, more general guarantees for heuristics are not available.  Thus if a certificate of optimality or a bound on the deviation from optimality is desired, branch-and-bound remains the method of choice and has therefore been considered by many researchers \cite{bienstock1996,shaw2008,bertsimas2009,bonami2009,vielma2008}.  In particular, \cite{bienstock1996} investigates a branch-and-cut algorithm employing disjunctive cuts and finds that such cuts are ineffective when $\mbQ$ is near full rank.  In \cite{bertsimas2009}, Lemke's pivoting method is used to provide warm starts in solving continuous relaxations.  Lagrangian relaxations have also been considered \cite{shaw2008}.  In \cite{vielma2008}, a lifted polyhedral relaxation is applied to mixed-integer second-order cone programs (which include the problems considered here) to take advantage of the more mature techniques for solving mixed-integer linear programs.

The complexity of branch-and-bound can be significantly reduced if specialized relaxations are available that can better approximate the original optimal cost while remaining efficiently solvable.  Such relaxations permit increased pruning of the branch-and-bound tree and can also suggest stronger reformulations of the original problem.  In the present context, a sequence of works \cite{frangioni2006,frangioni2007,gunluk2010,cui2012,zheng2012} have developed the \emph{perspective} relaxation, so-called because of its relationship to the perspective of a convex function.  The perspective relaxation can also be viewed as a particularly tractable instance of disjunctive convex optimization \cite{ceria1999}.  In \cite{frangioni2006,gunluk2010}, the relaxation is derived for general convex functions (not necessarily quadratic) using a convex hull approach; \cite{frangioni2006} emphasizes the identification of linear cuts whereas \cite{gunluk2010} proposes solving the nonlinear relaxation directly, aided by second-order cone representations.  In contrast, \cite{cui2012} focuses on portfolio optimization and derives the relaxation through Lagrangian decomposition.  The authors of \cite{frangioni2006,gunluk2010,cui2012} also show that the perspective relaxation is tighter than the standard continuous relaxation in certain contexts.  To apply the relaxation to portfolio optimization problems, a diagonal matrix must be separated from $\mbQ$; a semidefinite programming method for determining the best separation was reported very recently \cite{zheng2012} and is shown to outperform simpler methods in \cite{frangioni2006,frangioni2007}.  %largely focused on the cardinality-constrained portfolio optimization problem as opposed to \eqref{eqn:sparseProb}  
None of the above works however have analyzed the quality of approximation of the relaxation with respect to the original problem.

%At the same time, the bounds must be efficiently computable since they may potentially be evaluated for a large number of subproblems.

In this paper, we focus on the pure quadratically-constrained problem \eqref{eqn:sparseProb} and investigate two relaxations. % with the goal of obtaining bounds on the optimal cost.  Both relaxations are convex and can be efficiently solved.  
The first is the conventional continuous relaxation, obtained by formulating \eqref{eqn:sparseProb} as a mixed-integer optimization and relaxing binary-value constraints to unit interval constraints.  An absolute upper bound is given on the optimal cost of the continuous relaxation.  The bound suggests that the continuous relaxation is relatively weak for many instances of \eqref{eqn:sparseProb}, a hypothesis borne out by numerical experiments.  %We show analytically as well as numerically that linear relaxations yield relatively weak bounds for many instances of \eqref{eqn:sparseProb}.  
The second relaxation exploits the simplicity of the case of diagonal $\mbQ$, specifically by constructing the best diagonal approximation to \eqref{eqn:sparseProb}, referred to as a diagonal relaxation. %, when $\mbQ$ is non-diagonal.  
A computational comparison of the two relaxations shows that diagonal relaxations often yield significantly stronger bounds and can greatly decrease the complexity of a branch-and-bound solution to \eqref{eqn:sparseProb}, by orders of magnitude in difficult instances, and even when $\mbQ$ does not seem close to diagonal.  Similar efficiency gains are seen relative to the mixed-integer programming solver CPLEX \cite{cplex2012}.  Motivated by these results, this paper undertakes a theoretical analysis of diagonal relaxations, providing approximation guarantees for certain classes of instances and general insight into when diagonal relaxations are expected to be successful.  In particular, bounds on the approximation ratio are derived in terms of the eigenvalues of $\mbQ$ and in the cases of diagonally dominant $\mbQ$ and nearly coordinate-aligned $\mcE_{\mbQ}$.  We note that a relaxation similar to the diagonal relaxation was proposed independently in \cite{gao2011} with similarly positive computational experience.  A principal objective of the current paper is to support such findings with more detailed analysis.

We begin in Sect.~\ref{sec:prelim} by deriving some preliminary facts pertaining to problem \eqref{eqn:sparseProb}.  In Sect.~\ref{sec:linRelax}, continuous relaxations of \eqref{eqn:sparseProb} are discussed and analyzed, while the same is done for diagonal relaxations in Sect.~\ref{sec:diagRelax}.  In Sect.~\ref{sec:numEx}, the two relaxations are compared numerically in terms of their approximation ratios and effect on branch-and-bound complexity.  A comparison with CPLEX is also reported.  The paper concludes in Sect.~\ref{sec:concl}.

\subsection{Notation}
\label{subsec:notation}

Vectors and matrices are denoted using lowercase and uppercase boldface letters with $x_{n}$ representing the $n$th element of a vector $\mbx$ and $Q_{mn}$ the $(m,n)$ element of a matrix $\mbQ$.  
%Depending on context, a boldface $\0$ may denote the zero vector or the zero matrix of appropriate dimensions.
The letter $\mbe$ is reserved for a vector of unit entries. %; $\mbe_k$ represents the $k$th standard basis vector. 
%The $n$th element of a vector $\mbx$ is written $x_{n}$ and the $(m,n)$ element of a matrix $\mbQ$ is $Q_{mn}$.  
%The zero-norm notation $\norm[0]{\mbx}$ refers to the number of non-zero elements of $\mbx$.  
For sets of indices $Y$ and $Z$, $\mbx_Y$ represents the $\abs{Y}$-dimensional subvector of $\mbx$ corresponding to $Y$ and $\mbQ_{YZ}$ the $\abs{Y}\times\abs{Z}$ submatrix of $\mbQ$ with rows indexed by $Y$ and columns indexed by $Z$.  %Matrix transposition is denoted by a superscript $^T$.  
The notation $\mbQ \succeq \0$ ($\mbQ \succ \0$) indicates that $\mbQ$ is positive semidefinite (positive definite); $\mbQ \succeq \mbD$ is equivalent to $\mbQ - \mbD \succeq \0$.  The $n$th smallest eigenvalue of $\mbQ$ is written $\lambda_n(\mbQ)$ except as noted in Sect.~\ref{subsec:diagRelaxNAA}; we also use $\lambda_{\min}(\mbQ)$ and $\lambda_{\max}(\mbQ)$ for the smallest and largest eigenvalues.
%Given a vector $\mbc$, $\Diag(\mbc)$ represents a diagonal matrix with the entries of $\mbc$ along the diagonal; given a matrix $\mbH$, $\diag(\mbH)$ is the vector formed from the diagonal entries of $\mbH$.
%The notation $\abs{\mcY}$ refers to the number of elements in the set $\mcY$. 

%We denote the inner product $\tr(\mbP^T \mbQ)$ between matrices $\mbP$ and $\mbQ$ as $\mbP \bullet \mbQ$.

\section{Preliminaries}
\label{sec:prelim}

In this section, some facts related to problem \eqref{eqn:sparseProb} are derived for later use.  In Sect.~\ref{subsec:prelimFeasCond}, a condition is given for the feasibility of solutions of specified cardinality.  In Sect.~\ref{subsec:prelimElim}, it is shown that variables that are either constrained to a zero value or assumed to be nonzero can be eliminated to yield a lower-dimensional instance of \eqref{eqn:sparseProb}.

\subsection{Feasibility of solutions of specified cardinality}
\label{subsec:prelimFeasCond}

First we obtain a condition for the existence of feasible solutions to \eqref{eqn:sparseProb} with a specified number $K$ of zero-valued components.  Suppose that $x_{n}$ is constrained to a zero value for $n$ in a set $Z$ of size $K$.  With $Y$ denoting the complement of $Z$, the constraint in \eqref{eqn:sparseProb} becomes 
\begin{equation}\label{eqn:quadConsZ}
%\begin{multline}\label{eqn:quadCons1Z}
%\begin{bmatrix} (\mbb_\mcY - \mbc_\mcY)^T & -\mbc_\mcZ^T \end{bmatrix}
%\begin{bmatrix} \mbQ_{\mcY\mcY} & \mbQ_{\mcY\mcZ} \\ 
%\mbQ_{\mcZ\mcY} & \mbQ_{\mcZ\mcZ} \end{bmatrix}
%\begin{bmatrix} \mbb_\mcY - \mbc_\mcY \\ -\mbc_\mcZ \end{bmatrix}\\
%= 
(\mbx_Y - \mbc_Y)^T \mbQ_{YY} (\mbx_Y - \mbc_Y) - 2 \mbc_Z^T \mbQ_{ZY} (\mbx_Y - \mbc_Y) + \mbc_Z^T \mbQ_{ZZ} \mbc_Z \leq \gamma.
%\end{multline}
\end{equation}
%
%where $\mbQ_{\mcZ\mcY}$ denotes the submatrix of $\mbQ$ with rows indexed by $\mcZ$ and columns indexed by $\mcY$.  
Consider minimizing the left-hand side of \eqref{eqn:quadConsZ} with respect to $\mbx_{Y}$, with solution $\mbx_Y - \mbc_Y = (\mbQ_{YY})^{-1} \mbQ_{YZ} \mbc_Z$.  If \eqref{eqn:quadConsZ} is not satisfied when the left-hand side is minimized, then it cannot be satisfied for any value of $\mbx_{Y}$.  Hence a feasible solution to \eqref{eqn:sparseProb} exists subject to $x_{n} = 0$ for $n \in Z$ if and only if 
\begin{equation}\label{eqn:feasTestZ}
\mbc_Z^T ( \mbQ / \mbQ_{YY} ) \mbc_Z \leq \gamma,
\end{equation}
where $\mbQ / \mbQ_{YY} = \mbQ_{ZZ} - \mbQ_{ZY} (\mbQ_{YY})^{-1} \mbQ_{YZ} = \left( \bigl(\mbQ^{-1}\bigr)_{ZZ} \right)^{-1}$ is the Schur complement of $\mbQ_{YY}$.  Condition \eqref{eqn:feasTestZ} may be generalized to encompass all subsets of cardinality $K$ using a similar argument: If \eqref{eqn:feasTestZ} is not satisfied when the left-hand side is minimized over all subsets $Z$ of cardinality $K$, then there can be no solution to \eqref{eqn:sparseProb} with $K$ zero-valued components.  This yields the condition 
%, defined as \cite{hornjohnson1994}
%
%\begin{equation}\label{eqn:Schur}
%\[
%\mbQ / \mbQ_{\mcY\mcY} = \mbQ_{\mcZ\mcZ} - \mbQ_{\mcZ\mcY} (\mbQ_{\mcY\mcY})^{-1} \mbQ_{\mcY\mcZ} = \left( \bigl(\mbQ^{-1}\bigr)_{\mcZ\mcZ} \right)^{-1}.
%\]
%\end{equation}
%

%
\begin{equation}\label{eqn:feasTestK}
E_{0}(K) = \min_{\abs{Z}=K} \left\{ \mbc_Z^T( \mbQ / \mbQ_{YY} ) \mbc_Z \right\} \leq \gamma
\end{equation}
for the existence of a feasible solution of cardinality $N-K$.  In general, computing $E_{0}(K)$ in \eqref{eqn:feasTestK} involves an intractable combinatorial optimization.  However, when $\mbQ$ has special structure, $E_{0}(K)$ becomes much easier to evaluate and it is in these cases that condition \eqref{eqn:feasTestK} will be used.

In the special case of a single zero-value constraint, i.e., $Z = \{n\}$, condition \eqref{eqn:feasTestZ} reduces to 
\begin{equation}\label{eqn:feasTestSingleZero}
\frac{c_{n}^{2}}{\bigl(\mbQ^{-1}\bigr)_{nn}} \leq \gamma.
\end{equation}
If \eqref{eqn:feasTestSingleZero} is not satisfied, then it is not feasible for $x_{n}$ to take a value of zero.  It follows that an easily computed lower bound on the optimal cost in \eqref{eqn:sparseProb} is obtained by counting the number of indices $n$ for which \eqref{eqn:feasTestSingleZero} is not satisfied.  Furthermore, in Sect.~\ref{subsec:prelimElim}, it is shown that the variables $x_{n}$ corresponding to violations of \eqref{eqn:feasTestSingleZero} can be eliminated from the problem to reduce its dimension.

\subsection{Variable elimination}
\label{subsec:prelimElim}

We now consider restrictions of problem \eqref{eqn:sparseProb} in which certain variables are constrained to zero while others are assumed to be nonzero.  These two types of constraints arise in branch-and-bound as \eqref{eqn:sparseProb} is divided recursively into subproblems.  Variables that must be non-zero to maintain feasibility may also be identified through condition \eqref{eqn:feasTestSingleZero}.  

Let $Z$ denote as before the subset of variables constrained to zero, $U$ the subset of variables assumed to be nonzero, and $F$ the remainder.  We show that an arbitrary subproblem defined by subsets $(Z, U, F)$ can be reduced to the following problem:
\begin{equation}\label{eqn:subprobF}
\min_{\mbx_F} \quad \abs{U} + C(\mbx_F) \qquad 
\text{s.t.} \qquad \left( \mbx_F - \mbc_{\eff} \right)^T \mbQ_{\eff} \left( \mbx_F - \mbc_{\eff}
\right) \leq \gamma_{\eff},  
\end{equation}
with effective parameters given by  
\begin{subequations}\label{eqn:Qcgammaeff}
\begin{align}
\mbQ_{\eff} &= \mbQ_{FF} - \mbQ_{FU} \left(
\mbQ_{UU} \right)^{-1} \mbQ_{UF}, \label{eqn:Qeff}\\ 
\mbc_{\eff} &= \mbc_F + (\mbQ_{\eff})^{-1} \bigl( \mbQ_{FZ} - \mbQ_{FU} (\mbQ_{UU})^{-1} \mbQ_{UZ} \bigr) \mbc_Z, \label{eqn:ceff}\\
%\gamma_{\eff} &= \gamma - \mbc_\mcZ^T \left( \bigl(\mbQ^{-1}\bigr)_{\mcZ\mcZ} \right)^{-1} \mbc_\mcZ. \label{eqn:gammaeff} 
\gamma_{\eff} &= \gamma - \mbc_Z^T ( \mbQ / \mbQ_{YY} ) \mbc_Z. \label{eqn:gammaeff} \end{align}
\end{subequations}
Problem \eqref{eqn:subprobF} is an instance of \eqref{eqn:sparseProb} with $\abs{F}$ variables instead of $N$.  

The reduction can be carried out in the two steps $(\emptyset, \emptyset, \{1,\ldots,N\}) \longrightarrow (Z, \emptyset, Y = U \cup F) \longrightarrow (Z, U, F)$.  In the first step, the constraints $x_{n} = 0$ for $n \in Z$ reduce $C(\mbx)$ to $C(\mbx_{Y})$ and the quadratic constraint in \eqref{eqn:sparseProb} to \eqref{eqn:quadConsZ}.  By completing the square, \eqref{eqn:quadConsZ} can be rewritten as 
\begin{equation}\label{eqn:quadConsY}
\begin{bmatrix} \mbx_U - \mbc'_U \\ \mbx_F - \mbc'_F \end{bmatrix}^{T}
\begin{bmatrix} \mbQ_{UU} & \mbQ_{UF} \\ 
\mbQ_{FU} & \mbQ_{FF} \end{bmatrix}
\begin{bmatrix} \mbx_U - \mbc'_U \\ \mbx_F - \mbc'_F \end{bmatrix} 
\leq \gamma_{\eff},
%(\mbb_\mcY - \mbc'_\mcY)^T \mbQ_{\mcY\mcY} (\mbb_\mcY - \mbc'_\mcY) \leq \gamma_{\eff}
\end{equation}
where the subset $Y$ has been partitioned into $U$ and $F$, $\mbc'_U = \mbc_U + \left( (\mbQ_{YY})^{-1} \mbQ_{YZ} \mbc_Z \right)_U$, and $\mbc'_F = \mbc_F + \left( (\mbQ_{YY})^{-1} \mbQ_{YZ} \mbc_Z \right)_F$.  

In the second step $(Z, \emptyset, U\cup F) \longrightarrow (Z, U, F)$, the non-zero assumption on $\mbx_{U}$ allows $C(\mbx_{Y})$ to be rewritten as $\abs{U} + C(\mbx_F)$.  Since $\mbx_{U}$ no longer has any effect on the objective function, its value can be freely chosen, and in the interest of minimizing $C(\mbx_F)$, $\mbx_U$ should be chosen as a function of $\mbx_F$ to maximize the margin in constraint \eqref{eqn:quadConsY}, thereby making the set of feasible $\mbx_F$ as large as possible.  This is equivalent to minimizing the left-hand side of \eqref{eqn:quadConsY} with respect to $\mbx_U$ while holding $\mbx_F$ constant.  Similar to the minimization of \eqref{eqn:quadConsZ} with respect to $\mbx_{Y}$, we obtain $\mbx_U^\ast = \mbc'_U - \left( \mbQ_{UU} \right)^{-1} \mbQ_{UF} (\mbx_F - \mbc'_F)$ as the minimizer of \eqref{eqn:quadConsY}.  Substituting back into \eqref{eqn:quadConsY} results in the constraint in \eqref{eqn:subprobF}, except with $\mbc'_{F}$ in place of $\mbc_{\eff}$.  By expressing $(\mbQ_{YY})^{-1}$ in terms of the block decomposition of $\mbQ_{YY}$ in \eqref{eqn:quadConsY}, it can be shown that $\mbc'_{F}$ is equal to $\mbc_{\eff}$ as defined in \eqref{eqn:ceff}, thus completing the reduction.

In the sequel, we focus on the unrestricted root problem \eqref{eqn:sparseProb} with the understanding that the results apply to any subproblem by virtue of the reduction to \eqref{eqn:subprobF}.  In addition, the following assumption will be made:
\begin{assumption}\label{ass:feasTestSingleZero}
Condition \eqref{eqn:feasTestSingleZero} is satisfied for all $n = 1,\ldots,N$.
\end{assumption}
In other words, it is assumed that a feasible solution exists whenever a single variable is constrained to zero, since any variables for which this is not the case can be eliminated as shown in this section.  Thus the focus is solely on the ``difficult'' part of the problem, i.e., those variables whose status is ambiguous.

\section{Continuous relaxation}
\label{sec:linRelax}

In the remainder of the paper, we consider two relaxations of \eqref{eqn:sparseProb} for the purpose of obtaining lower bounds on its optimal cost in the context of branch-and-bound.  In Sect.~\ref{subsec:linRelaxDeriv}, \eqref{eqn:sparseProb} is reformulated as a mixed-integer optimization problem, yielding a continuous relaxation.  Best-case and worst-case instances are exhibited in Sect.~\ref{subsec:linRelaxEx} to show that continuous relaxations can provide arbitrarily tight or loose bounds on the optimal cost of \eqref{eqn:sparseProb}.  An absolute upper bound on the optimal cost of the relaxation is then derived in Sect.~\ref{subsec:linRelaxBounds}, suggesting that continuous relaxations are unlikely to yield good approximations to \eqref{eqn:sparseProb} in most instances.

\subsection{Derivation}
\label{subsec:linRelaxDeriv}

Problem \eqref{eqn:sparseProb} is first reformulated as a mixed integer optimization problem by associating with each continuous variable $x_{n}$ a binary-valued indicator variable $i_n$ with the property that $i_n = 0$ if $x_n = 0$ and $i_n = 1$ otherwise.  Problem \eqref{eqn:sparseProb} can be restated in terms of indicator variables as follows:
\begin{equation}\label{eqn:MIP}
%\begin{split}
\min_{\mbx,\mbi} \quad \sum_{n=1}^{N} i_n \qquad 
\text{s.t.} \qquad (\mbx - \mbc)^T \mbQ (\mbx - \mbc) \leq \gamma, \quad
\abs{x_n} \leq B_n i_n, \quad
i_n \in \{0, 1\} \quad \forall \; n.
%\end{split}
\end{equation}
The constraint $\abs{x_n} \leq B_n i_n$ is the usual forcing constraint linking $i_{n}$ with $x_{n}$ in the desired manner, where the positive constants $B_n$ are chosen large enough to keep the set of feasible $\mbx$ unchanged from that in \eqref{eqn:sparseProb}.  It will be seen shortly that $B_{n}$ should be set to the smallest possible value subject to this requirement, i.e.,  
%
%\begin{equation}\label{eqn:Bn}
\[
B_n = \max \left\{ \abs{x_n} : (\mbx - \mbc)^T \mbQ (\mbx - \mbc) \leq \gamma \right\} = \max \left\{ B_{n}^{+}, B_{n}^{-} \right\},
\]
%\end{equation}
%
where
%The inner maximizations in \eqref{eqn:Bn} can be solved in closed form as shown in Appendix \ref{app:Bn+-}, yielding
%
\begin{equation}\label{eqn:Bn+-}
B_{n}^{\pm} = \max \left\{ \pm x_n : (\mbx - \mbc)^T \mbQ (\mbx - \mbc) \leq \gamma \right\} = \sqrt{\gamma \bigl(\mbQ^{-1}\bigr)_{nn}} \pm c_n.
\end{equation}
The closed-form expressions for $B_{n}^{+}$ and $B_{n}^{-}$ can be derived straightforwardly from the corresponding KKT conditions \cite{bertsekas1999,wei2011}.  %Hence \eqref{eqn:Bn} simplifies to $B_n^{\ast} = \sqrt{\gamma \bigl(\mbQ^{-1}\bigr)_{nn}} + \abs{c_n}$.

A continuous relaxation of \eqref{eqn:MIP} results from relaxing the binary-value constraints on $i_n$ to interval constraints $0 \leq i_n \leq 1$.  By minimizing the objective with respect to $\mbi$ and substituting back into \eqref{eqn:MIP}, we obtain the following minimization with respect to $\mbx$:
\begin{equation}\label{eqn:linRelaxUnseparated}
\min_{\mbx} \quad \sum_{n=1}^{N} \frac{\abs{x_n}}{B_n} \qquad \text{s.t.} \qquad (\mbx - \mbc)^T \mbQ (\mbx - \mbc) \leq \gamma.
\end{equation}
The continuous relaxation \eqref{eqn:linRelaxUnseparated} is a quadratically-constrained weighted $1$-norm minimization and is therefore a convex problem.  The optimal cost in \eqref{eqn:linRelaxUnseparated} is clearly a lower bound on the optimal cost in \eqref{eqn:MIP} since the feasible set has been enlarged; more precisely, since the latter must be an integer, the ceiling of the former is also a lower bound.  It is also seen that the lower bound is maximized when the constants $B_n$ are as small as possible.  

A stronger lower bound on \eqref{eqn:MIP} can be obtained by first separating each variable $x_{n}$ into its positive and negative parts $x_{n}^{+}$ and $x_{n}^{-}$ as follows:
%The value of $B_n$ can be decreased even further if it can be made to depend on the sign of $b_n$.  For example, if $b_n$ is known to be positive, $B_n$ only needs to be greater than or equal to the quantity in \eqref{eqn:Bn+} without regard to \eqref{eqn:Bn-}, while the reverse is true if $b_n$ is negative.  Unless $c_n = 0$, one of the quantities in \eqref{eqn:Bn+} and \eqref{eqn:Bn-} is lower than the value in \eqref{eqn:Bn2}.  The key to allowing $B_n$ to vary in a sign-dependent way is to separate $b_n$ into its positive and negative parts.  We express each $b_n$ as
%
\begin{equation}\label{eqn:bn+-}
x_n = x_n^+ - x_n^-, \quad x_n^+, \; x_n^- \geq 0.
\end{equation}
%
%Under the condition that at least one of $b_n^+$, $b_n^-$ is always zero, the representation in \eqref{eqn:bn+-} is unique, $b_n = b_n^+$ for $b_n > 0$, and $b_n = -b_n^-$ for $b_n < 0$.  
By assigning to each pair $x_n^+$, $x_n^-$ corresponding indicator variables $i_n^+$, $i_n^-$ and constants $B_n^+$, $B_n^-$, a mixed integer optimization problem equivalent to \eqref{eqn:MIP} may be formulated, where the values of $B_n^+$ and $B_n^-$ are given by \eqref{eqn:Bn+-}.  The continuous relaxation of this alternative mixed integer formulation corresponds to the following quadratically-constrained linear program:
\begin{equation}\label{eqn:linRelaxPrimal}
%\begin{split}
\min_{\mbx^+,\mbx^-} \quad \sum_{n=1}^{N} \left(\frac{x_n^+}{B_n^+} 
+ \frac{x_n^-}{B_n^-} \right) \qquad 
\text{s.t.} \qquad (\mbx^+ - \mbx^- - \mbc)^T \mbQ (\mbx^+ - \mbx^- -
\mbc) \leq \gamma, \quad
\mbx^{\pm} \geq \0.
%\end{split}
\end{equation}
%
%Problem \eqref{eqn:linRelaxPrimal} is a quadratically constrained linear program and is also efficiently solvable.  The smallest values for $B_{n}^{+}$ and $B_{n}^{-}$ that ensure that \eqref{eqn:linRelaxPrimal} is a valid relaxation are given by $B_{n}^{+\ast}$ and $B_{n}^{-\ast}$ in \eqref{eqn:Bn+-}.  
Using \eqref{eqn:bn+-} to replace the absolute value functions in \eqref{eqn:linRelaxUnseparated} with linear functions as done in linear programming \cite{bt1997}, it can be seen that \eqref{eqn:linRelaxUnseparated} is a special case of \eqref{eqn:linRelaxPrimal} with $B_n^+$ and $B_n^-$ replaced by $B_n$.  Since $B_n = \max\{ B_{n}^{+}, B_{n}^{-} \}$, the optimal cost in \eqref{eqn:linRelaxPrimal} is at least as large as that in \eqref{eqn:linRelaxUnseparated}, and therefore \eqref{eqn:linRelaxPrimal} is at least as strong a relaxation as \eqref{eqn:linRelaxUnseparated}.  The term continuous relaxation will refer henceforth to \eqref{eqn:linRelaxPrimal} with $B_{n}^{\pm}$ given by \eqref{eqn:Bn+-}.

%An alternative interpretation of the linear relaxation in \eqref{eqn:linRelaxUnseparated} is as a weighted $\ell^1$ relaxation of the $\ell^0$ minimization in \eqref{eqn:sparseProb}.  In \eqref{eqn:linRelaxPrimal}, the weights are also allowed to depend on the signs of the coefficients.  The values of $B_n^+$ and $B_n^-$ in \eqref{eqn:Bn+-2} correspond to the choice of weights that gives the tightest relaxation in this class.  For this reason, we will use the term linear relaxation to refer henceforth to \eqref{eqn:linRelaxPrimal} with $B_n^+$ and $B_n^-$ given by \eqref{eqn:Bn+-2}.

Fig.~\ref{fig:weightedl1} shows a graphical interpretation of the continuous relaxation \eqref{eqn:linRelaxPrimal}.  The asymmetric diamond represents a level contour of the cost function, which can be regarded as a weighted $1$-norm with different weights for positive and negative component values.  As seen from \eqref{eqn:Bn+-}, the weights $B_n^{\pm}$ correspond to the maximum extent of the ellipsoid $\mcE_{\mbQ}$ along the positive and negative coordinate directions and can be found graphically as indicated in Fig.~\ref{fig:weightedl1}.  The solution to the weighted $1$-norm minimization can be visualized by inflating the diamond until it just touches the ellipsoid.  Note that Assumption \ref{ass:feasTestSingleZero} implies that $\mcE_{\mbQ}$ must intersect all of the coordinate planes.  In Sect.~\ref{subsec:linRelaxEx}, we will draw upon the geometric intuition in Fig.~\ref{fig:weightedl1} to construct best-case and worst-case instances for continuous relaxation. %The optimal solution is given by the point of tangency and the optimal value by the tangent contour.  

\begin{figure}[ht]
\centering
\psfrag{b1}[][]{$b_1$}
\psfrag{b2}[][]{$b_2$}
\psfrag{B1+}[][]{$B_1^+$}
\psfrag{B1-}[][]{$B_1^-$}
\psfrag{B2+}[][]{$B_2^+$}
\psfrag{B2-}[][]{$B_2^-$}
\psfrag{EQ}[][]{$\mcE_{\mbQ}$}
\includegraphics[width=0.45\columnwidth]{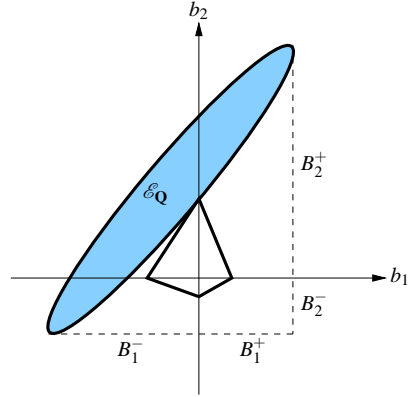}
\caption{Interpretation of the continuous relaxation as a weighted $1$-norm minimization and a graphical representation of its solution.} 
\label{fig:weightedl1}
\end{figure}

\begin{comment}
In Section \ref{subsec:linRelaxEx}, we will also require the dual of the linear relaxation \eqref{eqn:linRelaxPrimal}, given by 
%
\begin{equation}\label{eqn:linRelaxDual}
\begin{split}
\max_{\mbmu} \quad &\mbc^T \mbmu - \sqrt{\gamma \mbmu^T \mbQ^{-1}
 \mbmu}\\  
\text{s.t.} \quad &-\mbg^- \leq \mbmu \leq \mbg^+,
\end{split}
\end{equation}
%
with $g_n^+ = 1/B_n^+$ and $g_n^- = 1/B_n^-$ for all $n$.  The derivation of the dual problem can be found in Appendix \ref{app:linRelaxDual}.  Since the primal problem \eqref{eqn:linRelaxPrimal} is convex and has a strictly feasible solution $\mbb^+ - \mbb^- = \mbc$, and the dual has a strictly feasible solution $\mbmu = \0$, by Slater's condition the optimal values of the primal and dual are equal \cite{bertsekas1999}.  The dual is a nonlinear optimization problem with upper and lower bound constraints on each of the variables and is generally easier to solve than the primal because of the nature of the constraints. 
%A customized algorithm that solves the linear relaxation efficiently will be described in a future paper.
\end{comment}

\subsection{Best-case and worst-case instances}
\label{subsec:linRelaxEx}

%In general, given a relaxation of an optimization problem, it is of interest to analyze the conditions under which the relaxation is either a good or a poor approximation to the original problem.  
In this subsection, instances of problem \eqref{eqn:sparseProb} are exhibited to show that the continuous relaxation can be a tight approximation to \eqref{eqn:sparseProb} as well as an arbitrarily poor one.  The quality of approximation is characterized by the approximation ratio, defined as the ratio of the optimal cost of the relaxation to the optimal cost of the original problem.  

In the instances to be constructed, we take $\mbc = \mbe$ and $\gamma = 1$, which can be regarded as a normalization.  The matrix $\mbQ$ is restricted to be of the form 
\begin{equation}\label{eqn:Qex}
\mbQ = \lambda_{2} \mbI - (\lambda_{2} - \lambda_{1}) \mbv \mbv^{T},
\end{equation}
where $\lambda_{2} > \lambda_{1}$ and $\mbv$ is vector with unit 2-norm and components equal to $\pm 1/\sqrt{N}$.  It follows from \eqref{eqn:Qex} that $\mbv$ is an eigenvector of $\mbQ$ with eigenvalue $\lambda_{1}$ and the remaining $N-1$ eigenvectors are orthogonal to $\mbv$ with eigenvalue $\lambda_{2}$.  Geometrically, the ellipsoid $\mcE_{\mbQ}$ corresponding to \eqref{eqn:Qex} has a single long principal axis in the direction $\mbv$ and shorter and equal principal axes in the other directions.  We note for later use that the inverse of $\mbQ$ and the Schur complement $\mbQ / \mbQ_{YY}$ can be computed explicitly as 
\begin{align}
\mbQ^{-1} &= \frac{1}{\lambda_{2}} \mbI + \frac{\lambda_{2} - \lambda_{1}}{\lambda_{1} \lambda_{2}} \mbv \mbv^{T},\label{eqn:QinvEx}\\
\mbQ / \mbQ_{YY} &= \lambda_{2} \mbI - \frac{K \lambda_{2} (\lambda_{2} - \lambda_{1})}{K \lambda_{2} + (N-K) \lambda_{1}} \hat{\mbv}_{Z} \hat{\mbv}_{Z}^{T},\label{eqn:QschurEx}
\end{align}
where $K = \abs{Z}$ and $\hat{\mbv}_{Z}$ is the unit 2-norm vector obtained by rescaling the subvector $\mbv_{Z}$.

To construct best-case instances for which the continuous relaxation is a tight approximation to \eqref{eqn:sparseProb}, our aim is to make the optimal cost of the relaxation as large as possible.  Based on Fig.~\ref{fig:weightedl1} and the above structure for $\mbQ$, this can be done by choosing the major axis of $\mcE_{\mbQ}$ to be parallel to a level surface of the $1$-norm and keeping the lengths of the minor axes to a minimum, thus allowing the $\ell_{1}$ ball to grow relatively unimpeded.  Algebraically, we set $\lambda_{1} = 1/N$, $\lambda_{2} = N$, $\lceil N/2 \rceil$ of the components of $\mbv$ equal to $+1/\sqrt{N}$, and the remaining components of $\mbv$ equal to $-1/\sqrt{N}$.  

First it is shown that the point $\mbc - \sqrt{N} \mbv$ is optimal for \eqref{eqn:sparseProb} with a corresponding cost of $\lfloor N/2 \rfloor$.  Feasibility follows from substitution into the constraint in \eqref{eqn:sparseProb}.  To prove optimality, we verify that an additional zero-valued component is not feasible, i.e., condition \eqref{eqn:feasTestK} is violated for $K = N - \lfloor N/2 \rfloor + 1 = \lceil N/2 \rceil + 1$.  Substituting \eqref{eqn:QschurEx} and $\mbc = \mbe$ into \eqref{eqn:feasTestK} and rearranging, we obtain 
\begin{equation}\label{eqn:feasTestLinRelaxBestCase}
E_{0}(K) = (\lceil N/2 \rceil + 1)\lambda_{2} \left( 1 - \frac{(\lceil N/2 \rceil + 1) (\lambda_{2} - \lambda_{1})}{(\lceil N/2 \rceil + 1) \lambda_{2} + (\lfloor N/2 \rfloor - 1) \lambda_{1}} \max_{\abs{Z} = \lceil N/2 \rceil + 1} \frac{(\mbe^{T} \hat{\mbv}_{Z})^{2}}{\lceil N/2 \rceil + 1} \right). 
\end{equation}
The maximum in \eqref{eqn:feasTestLinRelaxBestCase} is achieved by choosing $Z$ to include all $\lceil N/2 \rceil$ positive components of $\mbv$ and only one negative component, resulting in a maximum value of $(\lceil N/2 \rceil - 1)^{2} / (\lceil N/2 \rceil + 1)^{2}$.  The quantity $E_{0}(K)$ can then be bounded from below by removing the fraction in front of the maximization.  This yields 
\[
(\lceil N/2 \rceil + 1)\lambda_{2} \left( 1 - \frac{(\lceil N/2 \rceil - 1)^{2}}{(\lceil N/2 \rceil + 1)^{2}} \right),
\]
which can be seen to be strictly greater than $\gamma = 1$ as required.

We now prove that the lower bound provided by the continuous relaxation is equal to the optimal cost of $\lfloor N/2 \rfloor$ for the unrelaxed problem.  Toward this end, we make use of the Lagrangian dual of the continuous relaxation, given by 
\begin{equation}\label{eqn:linRelaxDual}
\max_{\mbmu} \quad \mbc^{T} \mbmu - \sqrt{\gamma \mbmu^{T} \mbQ^{-1} \mbmu} 
\qquad \text{s.t.} \qquad -\mbg^{-} \leq \mbmu \leq \mbg^{+},  
\end{equation}
where $g_{n}^{\pm} = 1/B_{n}^{\pm}$ for $n = 1,\ldots,N$.  A derivation of the dual problem can be found in \cite{wei2011}.  It is shown that the optimal cost of the dual is strictly bounded from below by $\lfloor N/2 \rfloor - 1$, implying through duality that the optimal cost of the primal \eqref{eqn:linRelaxPrimal} is between $\lfloor N/2 \rfloor - 1$ and $\lfloor N/2 \rfloor$ and is equal to $\lfloor N/2 \rfloor$ after rounding up to the next integer.  From \eqref{eqn:Bn+-} and \eqref{eqn:QinvEx}, we find that $B_{n}^{+} = 1 + \sqrt{1 + (N-1)/N^{2}} = B^{+}$ for all $n$.  Substituting the dual feasible solution $\mbmu = \mbg^{+} = (1 / B^{+}) \mbe$ into the dual objective function and simplifying, we obtain 
\begin{equation}\label{eqn:linRelaxBestCaseLB}
\begin{cases}
\frac{1}{B^{+}} (N-1), & N \text{ even},\\
\frac{1}{B^{+}} \left(N - \sqrt{2 - \frac{1}{N^{2}}} \right), & N \text{ odd},
\end{cases}
\end{equation}
as a lower bound on the dual optimal cost.  Straightforward algebraic manipulations show that the quantities in \eqref{eqn:linRelaxBestCaseLB} are strictly greater than $\lfloor N/2 \rfloor - 1$ in the two cases of $N$ even and $N$ odd.  This completes the demonstration of the potential tightness of the continuous relaxation lower bound.

Next we construct instances for which the lower bound resulting from the continuous relaxation is as loose as possible.  The worst-case scenario corresponds to the optimal cost in \eqref{eqn:sparseProb} being equal to $N-1$ and the optimal cost of the relaxation being less than $1$.  The former cannot equal $N$ given Assumption \ref{ass:feasTestSingleZero} while the latter cannot equal zero exactly since that would require $\mbx = \0$ to be a feasible solution, in which case the optimal cost in \eqref{eqn:sparseProb} is also zero.  Referring again to Fig.~\ref{fig:weightedl1} and the form of $\mbQ$ in \eqref{eqn:Qex}, the optimal cost of the continuous relaxation can be minimized by orienting the major axis of the ellipsoid $\mcE_{\mbQ}$ so that it points toward the origin and obstructs the growth of the $\ell_{1}$ ball.  Algebraically, we set $\mbv = (1/\sqrt{N}) \mbe$, $\lambda_{1} = 1/(N-1)$, and $\lambda_{2} = (N-1)/2$.  We verify that the unrelaxed optimal cost is equal to $N-1$.  From \eqref{eqn:QinvEx}, we have $(\mbQ^{-1})_{nn} = (N+1)/N$, which ensures that \eqref{eqn:feasTestSingleZero} is satisfied for all $n$.  Using \eqref{eqn:QschurEx}, $E_{0}(K)$ in \eqref{eqn:feasTestK} for $K = 2$ evaluates to $N(N-1) / (N(N-1) - 1)$.  Since this quantity is greater than $\gamma = 1$, condition \eqref{eqn:feasTestK} is violated for $K = 2$ and the optimal cost in \eqref{eqn:sparseProb} must be equal to $N-1$.

To show that the optimal cost of the continuous relaxation is less than $1$, we consider the feasible and strictly positive solution $\mbx^{+} = \mbc - (1/\sqrt{\lambda_{1}}) \mbv = (1 - \sqrt{(N-1)/N}) \mbe$, $\mbx^{-} = \0$.  From \eqref{eqn:linRelaxPrimal}, the corresponding cost is 
\begin{equation}\label{eqn:linRelaxWorstCaseUB}
\frac{N - \sqrt{N(N-1)}}{B^{+}},
\end{equation}
where $B^{+} = 1 + \sqrt{(N+1)/N}$ is the common value for the constants $B_{n}^{+}$ given by \eqref{eqn:Bn+-}.  Since $B^{+} > 2$ while the numerator in \eqref{eqn:linRelaxWorstCaseUB} can be seen to be less than $1$, we conclude that the optimal cost in \eqref{eqn:linRelaxPrimal} is less than $1$ as claimed.  The approximation ratio in these instances is thus equal to $1/(N-1)$, which approaches zero as $N$ increases.

\subsection{An absolute upper bound}
\label{subsec:linRelaxBounds}

\begin{comment}
In this section, we derive bounds on the optimal value of the linear
relaxation in \eqref{eqn:linRelaxPrimal}.
First we make use of the dual problem \eqref{eqn:linRelaxDual} to
obtain a lower bound on the optimal cost.
%thus giving an approximate analytical guarantee on the quality of the
%linear relaxation. 
This can be done by evaluating the cost of any feasible solution to
the dual. 
For example, setting $\mbmu$ to be proportional to $\mbf = \mbQ \mbc$
results in a value of 
%
\begin{equation}\label{eqn:linRelaxLB}
\alpha \mbc^T \mbQ \mbc - \alpha\sqrt{\gamma \mbc^T \mbQ \mbc} 
= \alpha \sqrt{\mbc^T \mbQ \mbc} \left( \sqrt{\mbc^T \mbQ \mbc} -
\sqrt{\gamma} \right),
\end{equation}
%
where $\alpha$ is the constant of proportionality.
As long as $\mbb^+ - \mbb^- = \0$ is not a feasible solution to
\eqref{eqn:MIP+-}, $\mbc^T \mbQ \mbc > \gamma$ and hence the quantity
in parentheses in \eqref{eqn:linRelaxLB} is positive. 
The constant $\alpha$ may be set to the largest value that keeps
$\mbmu = \alpha\mbf$ feasible for problem \eqref{eqn:linRelaxDual}.
Specifically, the constraints $\alpha f_n \leq g_n^+$ for $f_n
\geq 0$ and $\alpha \abs{f_n} \leq g_n^-$ for $f_n < 0$ imply that
$\alpha$ should be chosen as 
%
\[
\alpha = \min_{n:f_n \neq 0} \left[ \abs{f_n} \left(
\sqrt{\gamma \bigl(\mbQ^{-1}\bigr)_{nn}} + \sgn(f_n) c_n \right)
\right]^{-1},
\]
%
where we have used \eqref{eqn:Bn+-2}.

Conversely, any feasible solution to the primal problem
\eqref{eqn:linRelaxPrimal} provides an upper bound on the optimal cost
of the linear relaxation.
\end{comment}

The constructions in Sect.~\ref{subsec:linRelaxEx} imply that the approximation ratio for the continuous relaxation can range anywhere between $0$ and $1$, and thus it is not possible to place a non-trivial bound on the ratio that holds for all instances of \eqref{eqn:sparseProb}.  It is possible however to obtain an absolute upper bound on the optimal cost of the continuous relaxation in terms of the problem dimension $N$.  
\begin{proposition}\label{prop:linRelaxUB}
Under Assumption \ref{ass:feasTestSingleZero}, the optimal cost of the continuous relaxation \eqref{eqn:linRelaxPrimal} is bounded from above by $\theta N/2$, where $\theta = 1 - \sqrt{\gamma / \mbc^{T} \mbQ \mbc}$.
\end{proposition}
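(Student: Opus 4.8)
The plan is to exhibit a single feasible point of the continuous relaxation \eqref{eqn:linRelaxPrimal} whose cost is already at most $\theta N/2$; since any feasible primal point upper-bounds the optimal cost, this suffices. The natural candidate is the scaled center $\mbx = \theta \mbc$. I would first dispose of the degenerate case $\mbc^T \mbQ \mbc \le \gamma$, in which $\mbx = \0$ is feasible, the optimal relaxation cost is $0$, and $\theta \le 0$, so there is nothing to prove; thus assume $\mbc^T \mbQ \mbc > \gamma$, which gives $\theta \in (0,1)$.

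Next I would check feasibility of $\mbx = \theta \mbc$. Substituting into the quadratic constraint gives $(\theta\mbc - \mbc)^T \mbQ (\theta\mbc - \mbc) = (1-\theta)^2 \, \mbc^T \mbQ \mbc$, and since $1 - \theta = \sqrt{\gamma/\mbc^T\mbQ\mbc}$ this equals exactly $\gamma$, so the point lies on the boundary of $\mcE_\mbQ$ and is feasible. Writing $\mbx = \mbx^+ - \mbx^-$ with $x_n^+ = \theta c_n$, $x_n^- = 0$ when $c_n > 0$ and $x_n^+ = 0$, $x_n^- = \theta\abs{c_n}$ when $c_n < 0$ (using $\theta > 0$), the nonnegativity constraints $\mbx^{\pm} \ge \0$ hold as well.

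Then I would evaluate the objective. Using $B_n^{\pm} = \sqrt{\gamma \bigl(\mbQ^{-1}\bigr)_{nn}} \pm c_n$ from \eqref{eqn:Bn+-}, the contribution of index $n$ is $\theta c_n / B_n^+$ when $c_n > 0$ and $\theta\abs{c_n}/B_n^-$ when $c_n < 0$; in both cases this equals
\[
\frac{\theta \abs{c_n}}{\sqrt{\gamma \bigl(\mbQ^{-1}\bigr)_{nn}} + \abs{c_n}}.
\]
The crucial step is to bound each such term by $\theta/2$, and this is exactly where Assumption \ref{ass:feasTestSingleZero} enters: condition \eqref{eqn:feasTestSingleZero} reads $c_n^2 \le \gamma \bigl(\mbQ^{-1}\bigr)_{nn}$, i.e.\ $\abs{c_n} \le \sqrt{\gamma\bigl(\mbQ^{-1}\bigr)_{nn}}$, so the denominator is at least $2\abs{c_n}$ and the fraction is at most $1/2$. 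Summing over $n = 1,\ldots,N$ then yields a total cost of at most $\theta N/2$, establishing the bound.

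The main obstacle is not any analytic difficulty but the insight that the scaled center $\theta\mbc$ is the right feasible point and that its per-coordinate cost fuses so cleanly with the single-zero feasibility assumption; once that choice is made, the rest is routine algebra. I would also remark that the estimate is tight, up to rounding, precisely when every $\abs{c_n} = \sqrt{\gamma\bigl(\mbQ^{-1}\bigr)_{nn}}$, i.e.\ when each coordinate sits exactly at the feasibility threshold of \eqref{eqn:feasTestSingleZero}, which pinpoints the regime in which the continuous relaxation is weakest.
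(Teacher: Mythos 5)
Your proposal is correct and follows essentially the same route as the paper: exhibit the feasible boundary point $\theta\mbc$ (split into positive and negative parts), evaluate its cost as $\theta\sum_n \abs{c_n}/(\sqrt{\gamma(\mbQ^{-1})_{nn}} + \abs{c_n})$, and bound each fraction by $1/2$ via Assumption \ref{ass:feasTestSingleZero}. Your explicit handling of the degenerate case $\mbc^T\mbQ\mbc \le \gamma$ is a small addition the paper leaves implicit, but the argument is otherwise identical.
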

\begin{proof}
Consider the solution $\mbb^+ - \mbb^- = \theta\mbc$, i.e., $b_n^+ = \theta c_n$, $b_n^- = 0$ for $c_n \geq 0$ and $b_n^+ = 0$, $b_n^- = \theta\abs{c_n}$ for $c_n < 0$.  It can be verified that this is a feasible solution for the continuous relaxation (the solution lies on the boundary of the ellipsoid $\mcE_{\mbQ}$), and hence the optimal cost of the relaxation is bounded from above by 
\begin{equation}\label{eqn:linRelaxUB1}
\theta \sum_{n:c_n>0} \frac{c_n}{B_n^{+}} + \theta \sum_{n:c_n<0} \frac{\abs{c_n}}{B_n^{-}} 
= \theta \sum_{n=1}^{N} \frac{\abs{c_n}}{\sqrt{\gamma \bigl(\mbQ^{-1}\bigr)_{nn}} + \abs{c_n}},
\end{equation}
using \eqref{eqn:Bn+-}.  Assumption \ref{ass:feasTestSingleZero} then implies that each of the fractions on the right-hand side of \eqref{eqn:linRelaxUB1} is no greater than $1/2$, completing the proof. 
\qed\end{proof}
%($\abs{\mcF}/2$ in the case of problem \eqref{eqn:MIsubprobF1})

Proposition \ref{prop:linRelaxUB} indicates that the continuous relaxation cannot be a tight approximation if the optimal cost in \eqref{eqn:sparseProb} is greater than $\lceil \theta N/2 \rceil$. This suggests that it is unlikely for the continuous relaxation to yield a strong bound on \eqref{eqn:sparseProb} in most instances, since if it did, this would imply that the optimal cost in \eqref{eqn:sparseProb} is not much greater than $\theta N/2$ in most cases, a fact considered unlikely. 
%The upper bound in \eqref{eqn:linRelaxUB1} shows that the linear
%relaxation in \eqref{eqn:linRelaxPrimal} is unlikely to be a good
%approximation to \eqref{eqn:MIP+-} when $N$ is large and the true
%optimal cost is greater than $N/2$.
The situation is exacerbated if the factor $\theta$ is small.  This negative result motivates the consideration of an alternative relaxation as described in Section \ref{sec:diagRelax}.

We note in closing that Lemar\'{e}chal and Oustry \cite{lemarechaloustry1999} have shown that a common semidefinite relaxation technique is equivalent to continuous relaxation when applied to cardinality minimization problems such as \eqref{eqn:sparseProb}.  As a consequence, the properties of the continuous relaxation \eqref{eqn:linRelaxPrimal} noted in this section also apply to this type of semidefinite relaxation.

\section{Diagonal relaxation}
\label{sec:diagRelax}

As an alternative to continuous relaxations, in this section we discuss relaxations of problem \eqref{eqn:sparseProb} in which the matrix $\mbQ$ is replaced by a diagonal matrix, an approach referred to as diagonal relaxation.  As will be seen in Sect.~\ref{subsec:diagRelaxDeriv}, problem \eqref{eqn:sparseProb} is easily solved in the diagonal case, thus making it attractive as a relaxation of the problem when $\mbQ$ is non-diagonal.  %In Section \ref{subsec:diagRelaxDeriv}, we show how diagonal relaxations of \eqref{eqn:sparseProb} may be obtained, in particular the tightest possible diagonal relaxation.
It is shown in Sect.~\ref{subsec:diagRelaxWorstCase} that diagonal relaxations can yield exact as well as arbitrarily poor approximations to \eqref{eqn:sparseProb}, as was the case for the continuous relaxation in Sect.~\ref{sec:linRelax}.  However, numerical evidence in Sect.~\ref{sec:numEx} and elsewhere \cite{wei2012} indicates that the lower bounds provided by diagonal relaxations are often significantly stronger than those from continuous relaxations.  This computational experience motivates a better theoretical understanding of situations to which diagonal relaxations are particularly well-suited.  Within this context, approximation guarantees are derived in Sect.~\ref{subsec:diagRelaxEig}--\ref{subsec:diagRelaxNAA} for the three specific cases of well-conditioned $\mbQ$ matrices, diagonally dominant $\mbQ$, and nearly coordinate-aligned ellipsoids $\mcE_{\mbQ}$.

\subsection{Derivation}
\label{subsec:diagRelaxDeriv}

To obtain a diagonal relaxation of problem \eqref{eqn:sparseProb}, the matrix $\mbQ$ is replaced with a positive definite diagonal matrix $\mbD$ to yield a similar constraint:
\begin{equation}\label{eqn:quadConsDiag}
(\mbx - \mbc)^T \mbD (\mbx - \mbc) = \sum_{n=1}^{N} D_{nn} (x_n - c_n)^2 \leq \gamma.
\end{equation}
Geometrically, \eqref{eqn:quadConsDiag} specifies an ellipsoid, denoted as $\mcE_\mbD$, with axes that are aligned with the coordinate axes.  Since the relaxation is intended to provide a lower bound for the original problem, we require that the coordinate-aligned ellipsoid $\mcE_\mbD$ enclose the original ellipsoid $\mcE_\mbQ$ so that minimizing over $\mcE_\mbD$ yields a lower bound on the minimum over $\mcE_\mbQ$.  For simplicity, the two ellipsoids are taken to be concentric, in which case the nesting of the ellipsoids is equivalent to the condition $\mbD \preceq \mbQ$.  Sufficiency follows from the inequality
\begin{equation}\label{eqn:QDpsd}
(\mbx - \mbc)^T \mbD (\mbx - \mbc) \leq (\mbx - \mbc)^T \mbQ (\mbx - \mbc) \quad \forall \; \mbx,
\end{equation}
so if $\mbx \in \mcE_\mbQ$, then both sides of \eqref{eqn:QDpsd} are bounded by $\gamma$ and $\mbx \in \mcE_\mbD$.  Conversely, if $\mbD \not\preceq \mbQ$, then there exists a vector $\mbx$ that violates \eqref{eqn:QDpsd}, and by scaling $\mbx - \mbc$ so that the right-hand side of \eqref{eqn:QDpsd} is equal to $\gamma$, we have $\mbx \in \mcE_{\mbQ}$ but $\mbx \notin \mcE_{\mbD}$ since $\mbx$ does not satisfy \eqref{eqn:quadConsDiag}.

Problem \eqref{eqn:sparseProb} is greatly simplified in the diagonal case.  Replacing $\mbQ$ by $\mbD$, condition \eqref{eqn:feasTestK} simplifies to 
\[
\min_{\abs{Z} = K} \sum_{n\in Z} D_{nn} c_{n}^{2} \leq \gamma
\]
since $\mbD / \mbD_{YY} = \mbD_{ZZ}$.  The minimum is attained by choosing $Z$ to correspond to the $K$ smallest elements of the sequence $D_{nn} c_{n}^{2}, n = 1,\ldots,N$.  It follows that \eqref{eqn:quadConsDiag} admits a solution with $K$ zero-valued components if and only if 
\begin{equation}\label{eqn:feasTestKDiag}
S_K\bigl(\{ D_{nn} c_n^2 \}\bigr) \leq \gamma,
\end{equation}
where $S_{K}$ denotes the sum of the $K$ smallest elements of a sequence.  The minimum cardinality corresponds to the largest value of $K$ such that \eqref{eqn:feasTestKDiag} holds.

\begin{figure}[ht]
\centering
\psfrag{EQ}[][]{$\mcE_\mbQ$}
\psfrag{ED1}[][]{$\mcE_{\mbD_1}$}
\psfrag{ED2}[][]{$\mcE_{\mbD_2}$}
\includegraphics[width=0.4\columnwidth]{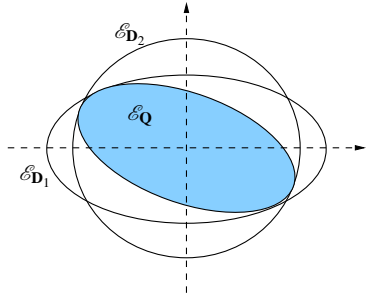}
\caption{Two different diagonal relaxations.}
\label{fig:diagRelax}
\end{figure}

For every $\mbD$ satisfying $\0 \preceq \mbD \preceq \mbQ$, minimizing $C(\mbx)$ subject to \eqref{eqn:quadConsDiag} results in a lower bound on the optimal cost in \eqref{eqn:sparseProb}.  Thus the set of diagonal relaxations is parameterized by $\mbD$ as illustrated in Fig.~\ref{fig:diagRelax}.  We are naturally interested in obtaining a diagonal relaxation that is as tight as possible, i.e., a matrix $\mbD_d$ such that the minimum cardinality associated with $\mbD_d$ is maximal among all valid choices of $\mbD$.  Such a relaxation can be determined based on condition \eqref{eqn:feasTestKDiag}, specifically by solving the following optimization problem: 
\begin{equation}\label{eqn:maxSumSmallest}
%\begin{split}
E_{d}(K) = \max_{\mbD} \quad S_K \bigl( \{D_{nn} c_n^2\} \bigr) \qquad 
\text{s.t.} \qquad \0 \preceq \mbD \preceq \mbQ, \quad \mbD \text{ diagonal},
%\end{split}
\end{equation}
for selected values of $K$.  If $E_d(K)$ in \eqref{eqn:maxSumSmallest} is less than or equal to $\gamma$, then \eqref{eqn:feasTestKDiag} holds for every $\mbD$ satisfying the constraints in \eqref{eqn:maxSumSmallest}, and consequently a feasible solution $\mbx$ with $K$ zero-valued components exists for every such $\mbD$.  We conclude that the optimal cost of any diagonal relaxation is at most $N - K$.  On the other hand, if $E_d(K) > \gamma$, then according to \eqref{eqn:feasTestKDiag} there exists a $\mbD$ for which a vector $\mbx$ with $K$ zero-valued components is not feasible, and for this $\mbD$ the optimal cost of the corresponding diagonal relaxation is at least $N - K + 1$.  By selecting values of $K$ to perform a bisection search over $1,\ldots,N$ and solving \eqref{eqn:maxSumSmallest} each time, we eventually arrive at the highest possible optimal cost under any diagonal relaxation, i.e., the tightest lower bound on \eqref{eqn:sparseProb} achievable with a diagonal relaxation.  Henceforth the term diagonal relaxation will be understood to refer to the tightest such relaxation.  

The above procedure determines both the tightest possible diagonal relaxation and its optimal cost at the same time, and amounts to solving \eqref{eqn:maxSumSmallest} for a maximum of $\lfloor \log_{2} N \rfloor + 1$ values of $K$.  Since the function $S_{K}$ is concave in $\mbD$ \cite{bv2004} and the constraints in \eqref{eqn:maxSumSmallest} are convex, the maximization in \eqref{eqn:maxSumSmallest} is a convex problem.  Furthermore, \eqref{eqn:maxSumSmallest} can be recast as a standard semidefinite program following \cite{bv2004} by expressing the function $S_{K}$ as the optimal cost of a linear program and then substituting the Lagrangian dual of the linear program.  Thus \eqref{eqn:maxSumSmallest} can be solved efficiently using standard interior-point algorithms.  Further efficiency enhancements can be made as detailed in \cite[Sec.~3.5]{wei2011}.
%the following semidefinite optimization problem in a scalar variable $y_{0}$ and vector variables $\mbv$ and $\mbw$:
%
%\begin{equation}\label{eqn:maxSumSmallestSDPprimal}
%\begin{split}
%\max_{y_0,\mbv,\mbw} \quad &K y_0 + \sum_{n=1}^{N} v_{n}\\
%\text{s.t.} \quad &\0 \preceq y_0\mbI + \Diag(\mbw) \preceq \Diag(\mbc) \mbQ \Diag(\mbc),\\ 
%&\mbw - \mbv \geq \0,\\
%&\mbv \leq \0,
%\end{split}
%\end{equation}
%
%where $\Diag(\mbw)$ denotes a diagonal matrix with the entries of $\mbw$ along the diagonal, and similarly for $\Diag(\mbc)$.  

\begin{comment}
\section{Analysis of the diagonal relaxation}
\label{sec:analysisDiagRelax}

To complement the numerical results in Section \ref{sec:compare}, in this section we analyze how the quality of approximation depends on properties of the matrix $\mbQ$, or equivalently of the ellipsoid $\mcE_\mbQ$ corresponding to $\mbQ$. In Section \ref{subsec:diagRelaxEig}, the approximation quality is characterized based on the condition number of $\mbQ$, while in Section \ref{subsec:diagRelaxDiagDom}, the case of diagonally dominant $\mbQ$ is considered.  
%The methods of Section \ref{subsec:diagRelaxDiagDom} are specialized to the case of $\mbQ$ corresponding to an exponentially decaying autocorrelation function in Section \ref{subsec:diagRelaxExpDecay}.  
In Section \ref{subsec:diagRelaxNAA}, we analyze the case in which the ellipsoid $\mcE_\mbQ$ is nearly axis-aligned, which can be viewed as the geometric counterpart to the diagonally dominant case.  To strengthen some of our results, we exploit the invariance of both problem \eqref{eqn:sparseProb} and its diagonal relaxation to diagonal scaling transformations, properties that are derived in Section \ref{subsec:diagRelaxInvar}.
\end{comment}

\subsection{Worst-case instances}
\label{subsec:diagRelaxWorstCase}

As with the continuous relaxation in Sect.~\ref{subsec:linRelaxEx}, we consider extreme instances in which the diagonal relaxation is either a tight approximation to the original problem or an arbitrarily poor one.  It is clear that if $\mbQ$ is already diagonal, the diagonal relaxation and the original problem coincide and the approximation ratio defined in Sect.~\ref{subsec:linRelaxEx} is equal to $1$.  It is shown that the approximation ratio can also equal zero, i.e., the optimal cost of the diagonal relaxation can be zero while the original problem has a non-zero optimal cost.  Based on Fig.~\ref{fig:diagRelax}, the diagonal relaxation is expected to result in a poor approximation when the original ellipsoid $\mcE_{\mbQ}$ is far from coordinate-aligned, thus forcing the coordinate-aligned enclosing ellipsoid $\mcE_{\mbD}$ to be much larger than $\mcE_{\mbQ}$.  This situation is exemplified by the first class of instances in Sect.~\ref{subsec:linRelaxEx} in which $\mcE_{\mbQ}$ is dominated by a single long axis with equal components in all coordinate directions.  To show that the diagonal relaxation has an optimal cost of zero in these instances, we make use of the following lemma.

\begin{lemma}\label{lem:maxSumSmallestLB}
Assume that the vector $\mbc$ has unit-magnitude components.  Then the optimal cost $E_d(K)$ in \eqref{eqn:maxSumSmallest} is bounded from below by $K \lambda_{\min}(\mbQ)$.  This lower bound is tight if the eigenvector $\mbv$ corresponding to $\lambda_{\min}(\mbQ)$ has components of equal magnitude.
\end{lemma}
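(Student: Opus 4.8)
The plan is to prove the two assertions separately: the lower bound $E_{d}(K) \geq K\lambda_{\min}(\mbQ)$, which holds for any $\mbc$ with unit-magnitude components, and then the matching upper bound under the stated hypothesis on the eigenvector.

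For the lower bound, I would exhibit a single feasible point of \eqref{eqn:maxSumSmallest} attaining the value $K\lambda_{\min}(\mbQ)$, namely the isotropic matrix $\mbD = \lambda_{\min}(\mbQ)\mbI$. It is diagonal, and since $\mbQ \succ \0$ it is positive definite; moreover the smallest eigenvalue of $\mbQ - \lambda_{\min}(\mbQ)\mbI$ is $0$, so $\0 \preceq \mbD \preceq \mbQ$ and $\mbD$ is feasible. Because $c_n^2 = 1$ for every $n$, each product $D_{nn}c_n^2$ equals $\lambda_{\min}(\mbQ)$, whence the sum of the $K$ smallest of them is exactly $K\lambda_{\min}(\mbQ)$. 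As $E_{d}(K)$ is a maximum over feasible $\mbD$, the lower bound follows immediately.

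For tightness I would establish the reverse inequality $E_{d}(K) \leq K\lambda_{\min}(\mbQ)$ for every feasible $\mbD$. Let $\mbv$ be an eigenvector of unit $2$-norm associated with $\lambda_{\min}(\mbQ)$. Testing the semidefinite constraint $\mbD \preceq \mbQ$ against $\mbv$ gives $\sum_n D_{nn}v_n^2 = \mbv^T\mbD\mbv \leq \mbv^T\mbQ\mbv = \lambda_{\min}(\mbQ)$. When the components of $\mbv$ have equal magnitude we have $v_n^2 = 1/N$ for all $n$, so this reduces to $\sum_n D_{nn} \leq N\lambda_{\min}(\mbQ)$.

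The remaining ingredient is the elementary averaging fact that the sum of the $K$ smallest entries of a sequence is at most $K/N$ times its total sum, since the average of the $K$ smallest values cannot exceed the overall average. Applied with $c_n^2 = 1$, this yields $S_{K}(\{D_{nn}\}) \leq (K/N)\sum_n D_{nn} \leq K\lambda_{\min}(\mbQ)$ for every feasible $\mbD$, hence $E_{d}(K) \leq K\lambda_{\min}(\mbQ)$, matching the lower bound. Neither direction poses a serious obstacle; the only mildly delicate point is combining the eigenvector constraint $\mbv^T\mbD\mbv \leq \lambda_{\min}(\mbQ)$ with the averaging inequality, and it is precisely here that the equal-magnitude hypothesis is needed, so that the weighted sum $\sum_n D_{nn}v_n^2$ reduces to a scalar multiple of the plain sum $\sum_n D_{nn}$.
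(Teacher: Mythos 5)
Your proposal is correct and follows essentially the same route as the paper: the same feasible point $\mbD = \lambda_{\min}(\mbQ)\mbI$ for the lower bound, the same test of $\mbD \preceq \mbQ$ against the unit eigenvector to get $\sum_n D_{nn} \leq N\lambda_{\min}(\mbQ)$, and the same averaging fact that the $K$ smallest entries sum to at most $K/N$ of the total. No gaps.
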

\begin{proof}
The diagonal matrix $\mbD = \lambda_{\min}(\mbQ) \mbI$ satisfies $\mbD \preceq \mbQ$ and is therefore a feasible solution to \eqref{eqn:maxSumSmallest}.  Hence the corresponding objective value $K \lambda_{\min}(\mbQ)$ (with $c_{n}^{2} = 1$ for all $n$) is a lower bound on $E_d(K)$.  If the eigenvector $\mbv$ has equal-magnitude components and is normalized to have unit 2-norm, then the inequality $\mbD \preceq \mbQ$ implies that 
\begin{equation}\label{eqn:traceUB}
\mbv^T \mbD \mbv = \frac{1}{N} \sum_{n=1}^{N} D_{nn} \leq \mbv^T \mbQ \mbv = \lambda_{\min}(\mbQ)
\end{equation}
for any feasible $\mbD$ in \eqref{eqn:maxSumSmallest}.  The solution $\mbD = \lambda_{\min}(\mbQ) \mbI$ satisfies \eqref{eqn:traceUB} with equality and is therefore an optimal solution to \eqref{eqn:maxSumSmallest} for $K = N$ under the assumptions of the lemma, yielding $E_{d}(N) = N \lambda_{\min}(\mbQ)$.  Using the fact that the mean of the $K$ smallest $D_{nn}$ for $K < N$ is no greater than the mean of all $N$ diagonal entries, it follows from \eqref{eqn:traceUB} that 
\begin{equation}\label{eqn:sumSmallestUB2}
S_{K} \bigl( \{D_{nn}\} \bigr) \leq K \lambda_{\min}(\mbQ), \quad K = 1, 2, \ldots, N-1,
\end{equation}
again for any feasible $\mbD$ in \eqref{eqn:maxSumSmallest}.  Since the solution $\mbD = \lambda_{\min}(\mbQ) \mbI$ also satisfies \eqref{eqn:sumSmallestUB2} with equality, it is an optimal solution to \eqref{eqn:maxSumSmallest} for all $K$ under the assumptions of the lemma and we have $E_{d}(K) = K \lambda_{\min}(\mbQ)$.
\qed\end{proof}

In the first class of instances in Sect.~\ref{subsec:linRelaxEx}, $\mbc = \mbe$, $\lambda_{\min}(\mbQ) = \lambda_1 = 1/N$ and the corresponding eigenvector $\mbv$ has equal-magnitude components.  It follows from Lemma \ref{lem:maxSumSmallestLB} that $E_d(K) = K \lambda_1 = K / N$, which does not exceed $\gamma = 1$ for any $K$.  Hence the optimal cost of the diagonal relaxation is zero while the optimal cost in the unrelaxed problem \eqref{eqn:sparseProb} is $\lfloor N/2 \rfloor$.  This implies that it is not possible to bound the approximation ratio away from zero for all instances of \eqref{eqn:sparseProb}, as with the continuous relaxation.  Furthermore, since the continuous relaxation yields a tight approximation for the same class of instances, neither relaxation strictly dominates the other (diagonal relaxations are clearly dominant in the case of diagonal $\mbQ$).  These conclusions however are based on extreme instances.  It will be seen in Sect.~\ref{sec:numEx} that in more typical instances the diagonal relaxation can offer a significantly better quality of approximation than the continuous relaxation.  In addition, non-trivial lower bounds on the diagonal relaxation approximation ratio can be obtained as in Sect.~\ref{subsec:diagRelaxEig}--\ref{subsec:diagRelaxNAA} when the class of instances of \eqref{eqn:sparseProb} is restricted.

\subsection{Eigenvalue-based approximation guarantees}
\label{subsec:diagRelaxEig}

In this subsection, the quality of approximation of the diagonal relaxation is characterized in terms of the eigenvalues of the matrix $\mbQ$.  The resulting bounds on the approximation ratio are strongest in the case of well-conditioned $\mbQ$, i.e., when the eigenvalues of $\mbQ$ have a low spread.  Geometrically, the well-conditioned case corresponds to a nearly spherical ellipsoid $\mcE_{\mbQ}$, which can be enclosed by a coordinate-aligned ellipsoid $\mcE_{\mbD}$ of comparable size as illustrated in Fig.~\ref{fig:diagRelaxCond}.  Given the close approximation of $\mcE_\mbQ$ by $\mcE_\mbD$ in terms of volume, one would expect a close approximation in terms of the cardinality cost as well.  This geometric intuition is confirmed by the analysis.

\begin{figure}[ht]
\centering
\psfrag{EQ}[][]{$\mcE_\mbQ$}
\psfrag{ED}[][]{$\mcE_\mbD$}
\includegraphics[width=0.6\columnwidth]{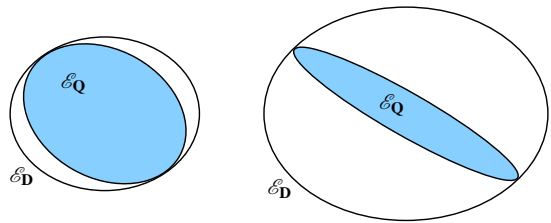}
\caption{Diagonal relaxations for two ellipsoids $\mcE_{\mbQ}$ with contrasting condition numbers.}
\label{fig:diagRelaxCond}
\end{figure}

The results presented in the remainder of the section are more conveniently stated in terms of the number of zero-valued components rather than the number of non-zero components.  Define $K^{\ast}$ to be the maximum number of zero-valued components in \eqref{eqn:sparseProb} and $K_{d}$ to be the maximum number of zero-valued components in the diagonal relaxation of \eqref{eqn:sparseProb}.  The enclosing condition $\mcE_{\mbQ} \subseteq \mcE_{\mbD}$ ensures that $K^{\ast} \leq K_{d}$, and a good approximation corresponds to the ratio $K_{d} / K^{\ast}$ being not much larger than $1$.  It is shown that $K^{\ast}$ and $K_{d}$ can be bounded by the following quantities related to the eigenvalues of $\mbQ$ and its Schur complements:
\begin{subequations}\label{eqn:Kbar}
\begin{align}
\underline{K} &= \max \left\{ K : \lambda_{\max}(\mbQ / \mbQ_{Y(K)Y(K)}) S_{K}(\{ c_{n}^{2} \}) \leq \gamma \right\},\label{eqn:Kunderbar}\\
\overline{K} &= \max \left\{ K : \lambda_{\min}(\mbQ) S_{K}(\{ c_{n}^{2} \}) \leq \gamma \right\},\label{eqn:Koverbar}
\end{align}
\end{subequations}
where $Y(K)$ denotes the index set corresponding to the $N-K$ largest-magnitude components of $\mbc$ (its complement $Z(K)$ corresponds to the $K$ smallest components).  The relationships among $K^{\ast}$, $K_{d}$, $\underline{K}$ and $\overline{K}$ are specified below.

\begin{theorem}\label{thm:diagRelaxEig}
The maximum numbers of zero-valued components in problem \eqref{eqn:sparseProb} and its diagonal relaxation, $K^{\ast}$ and $K_{d}$ respectively, satisfy the ordering $\underline{K} \leq K^{\ast} \leq K_{d} \leq \overline{K}$, where $\underline{K}$ and $\overline{K}$ are defined in \eqref{eqn:Kbar}.  Furthermore, the approximation ratio $K_{d} / K^{\ast}$ is bounded as follows:
\begin{equation}\label{eqn:approxRatioUBEig}
\frac{K_{d}}{K^{\ast}} \leq \frac{\overline{K}}{\underline{K}} \leq 
\frac{\left\lceil (\underline{K} + 1) \lambda_{\max}(\mbQ / \mbQ_{Y(\underline{K}+1)Y(\underline{K}+1)}) / \lambda_{\min}(\mbQ) \right\rceil - 1}{\underline{K}}.
%\approx \frac{\lambda_{\max}(\mbQ / \mbQ_{\mcY(\underline{K}+1)\mcY(\underline{K}+1)})}{\lambda_{\min}(\mbQ)}.
\end{equation}
\end{theorem}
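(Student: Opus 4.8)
The plan is to establish the chain $\underline{K} \leq K^{\ast} \leq K_{d} \leq \overline{K}$ first, and then derive the ratio bound \eqref{eqn:approxRatioUBEig} as an arithmetic consequence. The two inner inequalities of the chain are immediate: $K^{\ast} \leq K_{d}$ follows directly from the enclosing condition $\mcE_{\mbQ} \subseteq \mcE_{\mbD}$, which is already noted in the text (any zero-pattern feasible for the original problem is feasible for the relaxation, so the relaxation admits at least as many zeros). The real work is in the two outer inequalities, and both hinge on sandwiching the combinatorial feasibility quantity $E_0(K)$ from \eqref{eqn:feasTestK} between two tractable expressions built from eigenvalues and the sorted squared components $S_K(\{c_n^2\})$.

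For the upper bound $K_{d} \leq \overline{K}$, I would argue through the diagonal relaxation's own feasibility test \eqref{eqn:feasTestKDiag}. The key observation is that for any feasible $\mbD$ in \eqref{eqn:maxSumSmallest} we have $D_{nn} \geq \lambda_{\min}(\mbQ)$ is \emph{not} forced, so instead I work with the optimal relaxation: by Lemma \ref{lem:maxSumSmallestLB}'s style of trace argument, or more directly by noting $\mbD \preceq \mbQ$ forces $D_{nn} \leq Q_{nn}$ while the extremal choice $\mbD = \lambda_{\min}(\mbQ)\mbI$ is always feasible, the quantity $E_d(K)$ is at least $\lambda_{\min}(\mbQ) S_K(\{c_n^2\})$ (choosing $Z$ to pick out the $K$ smallest $c_n^2$). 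Hence if $\lambda_{\min}(\mbQ) S_K(\{c_n^2\}) > \gamma$ then $E_d(K) > \gamma$ and $K$ zeros are infeasible even in the relaxation; contrapositively $K_d$ cannot exceed the largest $K$ with $\lambda_{\min}(\mbQ) S_K(\{c_n^2\}) \leq \gamma$, which is exactly $\overline{K}$.

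For the lower bound $\underline{K} \leq K^{\ast}$, I would show that whenever $K \leq \underline{K}$ there genuinely exists a feasible solution to \eqref{eqn:sparseProb} with $K$ zeros, by exhibiting the index set $Z(K)$ of the $K$ smallest components and bounding $E_0(K)$ from above. The estimate I need is $\mbc_Z^T(\mbQ/\mbQ_{YY})\mbc_Z \leq \lambda_{\max}(\mbQ/\mbQ_{YY}) \|\mbc_Z\|_2^2 = \lambda_{\max}(\mbQ/\mbQ_{Y(K)Y(K)}) S_K(\{c_n^2\})$ for this particular $Z = Z(K)$; since $E_0(K)$ is the minimum over all $Z$, it is bounded above by this single choice. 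When $K \leq \underline{K}$ the right-hand side is $\leq \gamma$ by definition \eqref{eqn:Kunderbar}, so \eqref{eqn:feasTestK} holds and $K$ zeros are achievable, giving $\underline{K} \leq K^{\ast}$. The subtle point, and the step I expect to be the main obstacle, is justifying that $Y(K)$ should be the set of \emph{largest}-magnitude components: I must confirm that the relevant Schur-complement eigenvalue and the norm term interact monotonically enough that this choice controls $E_0(K)$, and that $\lambda_{\max}(\mbQ/\mbQ_{Y(K)Y(K)})$ is the correct constant to use rather than $\lambda_{\max}(\mbQ)$ (the Schur complement has smaller eigenvalues, tightening the bound).

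Finally, for the ratio \eqref{eqn:approxRatioUBEig}, the first inequality $K_d/K^{\ast} \leq \overline{K}/\underline{K}$ is immediate from the chain. For the second, I would use that $\overline{K}$ is the largest $K$ with $\lambda_{\min}(\mbQ) S_K \leq \gamma$, while $\underline{K}+1$ fails the test \eqref{eqn:Kunderbar}, meaning $\lambda_{\max}(\mbQ/\mbQ_{Y(\underline{K}+1)Y(\underline{K}+1)}) S_{\underline{K}+1} > \gamma$. Combining these two facts yields $\lambda_{\min}(\mbQ) S_{\overline{K}} \leq \gamma < \lambda_{\max}(\mbQ/\mbQ_{Y(\underline{K}+1)Y(\underline{K}+1)}) S_{\underline{K}+1}$, and using the superadditivity/monotonicity of $S_K$ (since each successive increment adds a nonnegative squared component, $S_K/K$ behaves controllably) one can bound $\overline{K}$ in terms of $\underline{K}+1$ scaled by the eigenvalue ratio, producing the ceiling expression after accounting for integrality. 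I would need to be careful with the direction of the $S_K$ comparison, but the ceiling and the ``$-1$'' in the numerator strongly suggest the bound comes from rounding $\overline{K} \leq (\underline{K}+1)\,\lambda_{\max}(\mbQ/\mbQ_{Y(\underline{K}+1)Y(\underline{K}+1)})/\lambda_{\min}(\mbQ)$ down to the largest admissible integer.
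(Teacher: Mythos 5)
Your proposal follows essentially the same route as the paper: $K_d \leq \overline{K}$ via feasibility of $\lambda_{\min}(\mbQ)\mbI$ in \eqref{eqn:maxSumSmallest}, $\underline{K} \leq K^{\ast}$ via the specific subset $Z(K)$ and the Rayleigh-quotient bound on the Schur complement (your worry about justifying the choice of $Y(K)$ is a non-issue, since any single subset upper-bounds the minimum defining $E_0(K)$ and the definition \eqref{eqn:Kunderbar} is tailored to exactly that subset), and the ratio bound by combining $\lambda_{\max}(\mbQ/\mbQ_{Y(\underline{K}+1)Y(\underline{K}+1)})S_{\underline{K}+1} > \gamma$ with the definition of $\overline{K}$. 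The one step you leave implicit is the precise monotonicity fact the paper uses to close the ratio argument, namely that $S_K(\{c_n^2\})/K$, the mean of the $K$ smallest elements, is non-decreasing in $K$, which gives $\lambda_{\min}(\mbQ)S_{\lceil k\rceil} \geq \lceil k\rceil\,\lambda_{\min}(\mbQ)\,S_{\underline{K}+1}/(\underline{K}+1) \geq \lambda_{\max}(\mbQ/\mbQ_{Y(\underline{K}+1)Y(\underline{K}+1)})S_{\underline{K}+1} > \gamma$ and hence $\overline{K}\leq\lceil k\rceil-1$.
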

\begin{proof}
The quantity $K^{\ast}$ is equivalently the largest value of $K$ such that condition \eqref{eqn:feasTestK} is satisfied, and hence $K^{\ast}$ can be bounded from below through an upper bound on $E_{0}(K)$ in \eqref{eqn:feasTestK}.  By choosing a specific subset $Z(K)$ corresponding to the $K$ smallest-magnitude components of $\mbc$, we obtain 
\begin{align}
E_{0}(K) = \min_{\abs{Z} = K} \left\{ \mbc_Z^T( \mbQ / \mbQ_{YY} ) \mbc_Z \right\} &\leq \mbc_{Z(K)}^T( \mbQ / \mbQ_{Y(K)Y(K)} ) \mbc_{Z(K)} \notag\\
&\leq \lambda_{\max}( \mbQ / \mbQ_{Y(K)Y(K)} ) S_{K}(\{c_{n}^{2}\}), \label{eqn:E0UBEig}
\end{align}
where the second inequality is due to a property of quadratic forms \cite{hornjohnson1994}.  It follows from \eqref{eqn:E0UBEig} and the definition of $\underline{K}$ in \eqref{eqn:Kunderbar} that $K^{\ast} \geq \underline{K}$.  Similarly, $K_{d}$ is the largest value of $K$ such that $E_{d}(K)$ in \eqref{eqn:maxSumSmallest} is no greater than $\gamma$ and can therefore be bounded from above through a lower bound on $E_{d}(K)$.  Since $\mbD = \lambda_{\min}(\mbQ) \mbI$ is a feasible solution to \eqref{eqn:maxSumSmallest}, we have $E_{d}(K) \geq \lambda_{\min}(\mbQ) S_{K}(\{c_{n}^{2}\})$ and $K_{d} \leq \overline{K}$ from the definition of $\overline{K}$ in \eqref{eqn:Koverbar}.

To obtain the bound on the ratio $\overline{K} / \underline{K}$, we infer from the definition of $\underline{K}$ in \eqref{eqn:Kunderbar} that $\lambda_{\max}(\mbQ / \mbQ_{Y(\underline{K}+1)Y(\underline{K}+1)}) S_{\underline{K}+1}(\{c_{n}^{2}\}) > \gamma$.  The left-hand side of this inequality can be bounded from above as follows:
\begin{equation}\label{eqn:KbarRatioUB}
\lambda_{\max}(\mbQ / \mbQ_{Y(\underline{K}+1)Y(\underline{K}+1)}) S_{\underline{K}+1}(\{c_{n}^{2}\}) 
\leq \lceil k \rceil \lambda_{\min}(\mbQ) \frac{S_{\underline{K}+1}(\{c_{n}^{2}\})}{\underline{K}+1} 
\leq \lambda_{\min}(\mbQ) S_{\lceil k \rceil}(\{c_{n}^{2}\}),
\end{equation}
where $k = (\underline{K} + 1) \lambda_{\max}(\mbQ / \mbQ_{Y(\underline{K}+1)Y(\underline{K}+1)}) / \lambda_{\min}(\mbQ) \geq \underline{K}+1$.  The last inequality in \eqref{eqn:KbarRatioUB} is due to the fact that the mean of the smallest elements in a sequence is non-decreasing when a larger number of elements is included.  From the inequality $\lambda_{\min}(\mbQ) S_{\lceil k \rceil}(\{c_{n}^{2}\}) > \gamma$ and the definition of $\overline{K}$ in \eqref{eqn:Koverbar}, we conclude that $\overline{K} \leq \lceil k \rceil - 1$.  %The approximation to the ratio $(\lceil k \rceil - 1) / \underline{K} \approx \lambda_{\max}(\mbQ / \mbQ_{\mcY_{\underline{K}+1}\mcY_{\underline{K}+1}}) / \lambda_{\min}(\mbQ)$ is justified when $\underline{K}$ is large.
\qed\end{proof}

In the limit of large $\underline{K}$, the bound on the approximation ratio $K_{d} / K^{\ast}$ in Theorem \ref{thm:diagRelaxEig} is approximately equal to the eigenvalue ratio $\lambda_{\max}(\mbQ / \mbQ_{Y(\underline{K}+1)Y(\underline{K}+1)}) / \lambda_{\min}(\mbQ)$, which can be regarded as a type of condition number.  This eigenvalue ratio is in turn bounded from above by the conventional condition number $\kappa(\mbQ) = \lambda_{\max}(\mbQ) / \lambda_{\min}(\mbQ)$ \cite{hornjohnson1994}, thus linking approximation quality in terms of cardinality to the geometric approximation quality illustrated in Fig.~\ref{fig:diagRelaxCond}.

Theorem \ref{thm:diagRelaxEig} can be strengthened somewhat by exploiting an invariance property of problem \eqref{eqn:sparseProb} and its diagonal relaxation.  It is straightforward to see that the optimal cost in \eqref{eqn:sparseProb} (and hence $K^{\ast}$) is invariant to diagonal scaling transformations of the feasible set, i.e., transformations parameterized by an invertible diagonal matrix $\mbS$ mapping $\mbc$ to $\mbS \mbc$ and $\mbQ$ to $\mbS^{-1} \mbQ \mbS^{-1}$.  Likewise, the optimal cost $E_{d}(K)$ in \eqref{eqn:maxSumSmallest} can be shown to be invariant to the same transformations, and thus $K_{d}$ is invariant \cite{wei2011}.  By generalizing the definitions of $\underline{K}$ and $\overline{K}$, Theorem \ref{thm:diagRelaxEig} can be generalized as follows:
\begin{corollary}\label{cor:diagRelaxEig}
For any invertible diagonal matrix $\mbS$, define $Y_{\mbS}(K)$ to be the index set corresponding to the $N-K$ largest $S_{nn} c_{n}^{2}$ and 
%
%\begin{subequations}\label{eqn:Kbar}
\begin{align*}
\underline{K}_{\mbS} &= \max \left\{ K : \lambda_{\max}((\mbS^{-1} \mbQ \mbS^{-1}) / (\mbS^{-1} \mbQ \mbS^{-1})_{Y_{\mbS}(K)Y_{\mbS}(K)}) S_{K}(\{ S_{nn} c_{n}^{2} \}) \leq \gamma \right\},\\
\overline{K}_{\mbS} &= \max \left\{ K : \lambda_{\min}(\mbS^{-1} \mbQ \mbS^{-1}) S_{K}(\{ S_{nn} c_{n}^{2} \}) \leq \gamma \right\}.
\end{align*}
%\end{subequations}
%
Then Theorem \ref{thm:diagRelaxEig} holds with $\mbQ$, $\underline{K}$, $\overline{K}$, and $Y(K)$ replaced by $\mbS^{-1} \mbQ \mbS^{-1}$, $\underline{K}_{\mbS}$, $\overline{K}_{\mbS}$, and $Y_{\mbS}(K)$ respectively.
\end{corollary}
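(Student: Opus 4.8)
The plan is to deduce Corollary \ref{cor:diagRelaxEig} from Theorem \ref{thm:diagRelaxEig} by a change-of-variables argument, using the fact that both $K^{\ast}$ and $K_{d}$ are invariant under the diagonal scaling $\mbx \mapsto \mbS\mbx$, $\mbc \mapsto \mbS\mbc$, $\mbQ \mapsto \mbS^{-1}\mbQ\mbS^{-1}$ recalled just before the statement. The key observation is that the generalized quantities $\underline{K}_{\mbS}$ and $\overline{K}_{\mbS}$ are exactly the quantities $\underline{K}$ and $\overline{K}$ of \eqref{eqn:Kbar} evaluated for the transformed instance $(\mbS^{-1}\mbQ\mbS^{-1}, \mbS\mbc, \gamma)$. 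Since $\mbS^{-1}\mbQ\mbS^{-1} \succ \0$ whenever $\mbQ \succ \0$, Theorem \ref{thm:diagRelaxEig} applies verbatim to this transformed instance; combining its conclusion with the two invariances then yields the corollary.

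First I would record the invariance of $K^{\ast}$. The map $\mbx \mapsto \mbS\mbx$ is a cardinality-preserving bijection of $\mathbb{R}^{N}$, and a one-line substitution gives $(\mbS\mbx - \mbS\mbc)^{T}(\mbS^{-1}\mbQ\mbS^{-1})(\mbS\mbx - \mbS\mbc) = (\mbx - \mbc)^{T}\mbQ(\mbx - \mbc)$, since $\mbS$ is diagonal and hence $\mbS^{T}\mbS^{-1} = \mbI$. Thus this map carries the feasible set of \eqref{eqn:sparseProb} onto that of the transformed problem while preserving the objective $C(\cdot)$, so the two problems share the same optimal cost and therefore the same maximum number of zero-valued components.

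Next I would verify the invariance of $E_{d}(K)$, which is the crux. Consider the bijection $\mbD \mapsto \mbS^{-1}\mbD\mbS^{-1}$ on diagonal matrices. Congruence by the invertible matrix $\mbS^{-1}$ shows, exactly as in the discussion of \eqref{eqn:QDpsd}, that $\0 \preceq \mbD \preceq \mbQ$ holds if and only if $\0 \preceq \mbS^{-1}\mbD\mbS^{-1} \preceq \mbS^{-1}\mbQ\mbS^{-1}$, so the feasible set of \eqref{eqn:maxSumSmallest} for the original instance maps exactly onto that of the transformed instance. Writing $\widetilde{D}_{nn} = D_{nn}/S_{nn}^{2}$ and $\widetilde{c}_{n} = S_{nn} c_{n}$, one has $\widetilde{D}_{nn}\widetilde{c}_{n}^{2} = D_{nn} c_{n}^{2}$, so the objective $S_{K}(\{D_{nn} c_{n}^{2}\})$ is literally unchanged along the bijection. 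Consequently $E_{d}(K)$ agrees on the two instances, and so does $K_{d}$, which is the largest $K$ with $E_{d}(K) \leq \gamma$.

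With both invariances in hand, I would finish by applying Theorem \ref{thm:diagRelaxEig} to the transformed instance: it provides the ordering and the ratio bound \eqref{eqn:approxRatioUBEig} with $K^{\ast}$ and $K_{d}$ replaced by their transformed counterparts, which the invariances identify with the original $K^{\ast}$ and $K_{d}$. The main obstacle is entirely bookkeeping, namely confirming that the theorem's defining data for the transformed instance coincide with the stated generalized definitions: that the index set of the $N-K$ largest transformed center components is $Y_{\mbS}(K)$, and that the eigenvalue factors produced by \eqref{eqn:Kbar} are precisely $\lambda_{\max}((\mbS^{-1}\mbQ\mbS^{-1})/(\mbS^{-1}\mbQ\mbS^{-1})_{Y_{\mbS}(K)Y_{\mbS}(K)})$ and $\lambda_{\min}(\mbS^{-1}\mbQ\mbS^{-1})$. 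This reduces to tracking the diagonal scaling through the definitions in \eqref{eqn:Kbar}, after which the chain $\underline{K}_{\mbS} \leq K^{\ast} \leq K_{d} \leq \overline{K}_{\mbS}$ and the ratio bound follow at once.
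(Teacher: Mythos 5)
Your argument is correct and is exactly the route the paper takes (the paper only sketches it): invariance of $K^{\ast}$ under the cardinality-preserving map $\mbx \mapsto \mbS\mbx$ and invariance of $E_{d}(K)$ under $\mbD \mapsto \mbS^{-1}\mbD\mbS^{-1}$, followed by applying Theorem \ref{thm:diagRelaxEig} to the transformed instance $(\mbS^{-1}\mbQ\mbS^{-1}, \mbS\mbc, \gamma)$. One bookkeeping note: carried out literally, your substitution yields weights $S_{nn}^{2}c_{n}^{2}$ rather than the $S_{nn}c_{n}^{2}$ printed in the corollary, an inconsequential reparameterization (replace $\mbS$ by $\mbS^{1/2}$) that points to a small typo in the statement rather than a gap in your proof.
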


\noindent The scaling matrix $\mbS$ can be chosen to minimize the eigenvalue ratio in Theorem \ref{thm:diagRelaxEig}, i.e., as a type of optimal diagonal preconditioner for $\mbQ$, thus minimizing the bound on the approximation ratio.

The bounds in Theorem \ref{thm:diagRelaxEig} are essentially tight.  Specifically, it is shown that for $N \geq 5$, the inequalities $\underline{K} \leq K^{\ast}$ and $K_{d} \leq \overline{K}$ can be simultaneously tight so that the left-hand inequality in \eqref{eqn:approxRatioUBEig} is met with equality, while the right-hand inequality reduces to $\overline{K} / \underline{K} \leq (\overline{K}+1) / \underline{K}$ and is asymptotically tight as $\underline{K} \to \infty$.  We consider again the first class of instances constructed in Sect.~\ref{subsec:linRelaxEx} in which $\mbc = \mbe$, $\gamma = 1$, and the eigenvector $\mbv$ corresponding to the smallest eigenvalue of $\mbQ$ has $\lceil N/2 \rceil$ components equal to $+1/\sqrt{N}$ and $\lfloor N/2 \rfloor$ components equal to $-1/\sqrt{N}$.  We keep $\lambda_{1} = 1/N$ and change $\lambda_{2}$ to $\lambda_{2} = 1/(2 \lceil N/2 \rceil - \lfloor \sqrt{N} \rfloor - 1)$.  Given these choices, \eqref{eqn:Koverbar} yields $\overline{K} = N = 1/\lambda_{1}$, while from \eqref{eqn:QschurEx} we have $\lambda_{\max}(\mbQ / \mbQ_{Y(K)Y(K)}) = \lambda_{2}$ and hence $\underline{K} = 1/\lambda_{2} = 2 \lceil N/2 \rceil - \lfloor \sqrt{N} \rfloor - 1$ from \eqref{eqn:Kunderbar}.  It can then be verified through substitution that the rightmost quantity in \eqref{eqn:approxRatioUBEig} is equal to $(\overline{K} + 1) / \underline{K}$ for $N \geq 5$ as claimed. 
%
%\[
%\frac{\left\lceil (\underline{K} + 1) \lambda_{\max}(\mbQ / \mbQ_{\mcY(\underline{K}+1)\mcY(\underline{K}+1)}) / \lambda_{\min}(\mbQ) \right\rceil - 1}{\underline{K}} = \frac{\overline{K}+1}{\underline{K}}
%\]
%
Furthermore, the construction satisfies the assumptions of Lemma \ref{lem:maxSumSmallestLB} and thus $E_{d}(K) = K \lambda_{\min}(\mbQ) = K/N$, from which it follows that $K_{d} = N = \overline{K}$.

It remains to show that $\underline{K} = K^{\ast}$ for this class of instances.  This is equivalent to showing that condition \eqref{eqn:feasTestK} is violated for $K = \underline{K} + 1$.  Substituting \eqref{eqn:QschurEx} and the chosen parameter values into \eqref{eqn:feasTestK} and performing some simplifications, the required condition $E_{0}(\underline{K} + 1) > \gamma$ is equivalent to
\begin{equation}\label{eqn:feasTestDiagRelaxEig}
(\underline{K} + 1)(\kappa - 1) \max_{\abs{Z} = \underline{K} + 1} (\mbe^{T} \hat{\mbv}_{Z})^{2} < (\underline{K} + 1)(\kappa - 1) + N,
\end{equation}
where $\kappa = \lambda_{2} / \lambda_{1} = \overline{K} / \underline{K}$.  As was the case in \eqref{eqn:feasTestLinRelaxBestCase}, the maximum in \eqref{eqn:feasTestDiagRelaxEig} is achieved by including in $Z$ all $\lceil N/2 \rceil$ positive components of $\mbv$, with the remaining components being negative.  Noting that $\underline{K} + 1 = 2 \lceil N/2 \rceil - \lfloor \sqrt{N} \rfloor \geq \lceil N/2 \rceil$ for $N \geq 5$, the maximum value can be seen to be $\lfloor \sqrt{N} \rfloor^{2} / (\underline{K} + 1)$.  Condition \eqref{eqn:feasTestDiagRelaxEig} then becomes 
\[
\left(2 \lceil N/2 \rceil - \lfloor \sqrt{N} \rfloor^{2}\right)(\kappa - 1) + N - \lfloor \sqrt{N} \rfloor (\kappa - 1) > 0,
\]
which is true given that $1 < \kappa \leq 2$ for $N \geq 5$.

Theorem \ref{thm:diagRelaxEig} and Corollary \ref{cor:diagRelaxEig} characterize the approximation quality of the diagonal relaxation in terms of extreme eigenvalues, specifically the smallest eigenvalue of $\mbQ$ and the largest eigenvalue of a Schur complement of $\mbQ$.  A second characterization involving intermediate eigenvalues can be obtained under the stochastic assumption that the eigenvectors of $\mbQ$ are chosen as an orthonormal set uniformly at random from the unit sphere.  This assumption allows the bound on $E_{0}(K)$ in \eqref{eqn:E0UBEig} to be improved, essentially replacing the largest eigenvalue of $\mbQ / \mbQ_{Y(\underline{K}+1)Y(\underline{K}+1)}$ with the mean eigenvalue of $\mbQ$, $\lambdab(\mbQ) = \frac{1}{N} \sum_{n=1}^{N} \lambda_{n}(\mbQ)$.  By retaining the other elements in the proof of Theorem \ref{thm:diagRelaxEig}, we obtain the following bound on the approximation ratio, which holds with high probability as $N$ becomes large.

\begin{theorem}\label{thm:diagRelaxEigRand}
Let the matrix $\mbV$ of eigenvectors of $\mbQ$ be drawn uniformly at random from the set of $N\times N$ orthogonal matrices.  Then the approximation ratio $K_{d} / K^{\ast}$ is bounded from above by 
\[
\frac{\lceil (\underline{K}+1) (1+\epsilon) \lambdab(\mbQ) / \lambda_{\min}(\mbQ) \rceil - 1}{\underline{K}}
\]
with probability at least 
\begin{equation}\label{eqn:approxRatioUBProb}
\begin{cases}
1 - \exp\left( -\frac{N}{8} \frac{\epsilon^{2} \lambdab(\mbQ)^{2}}{\epsilon^{2} \lambdab(\mbQ)^{2} + \var(\lambda(\mbQ))} \right), & \epsilon \in (0, \epsilon_{\max}) \backslash \mathcal{I},\\
1 - \exp\left( -\frac{N}{8} \frac{\epsilon \lambdab(\mbQ)}{\epsilon \lambdab(\mbQ) + (\lambda_{\max}(\mbQ) - \lambdab(\mbQ))} \right), & \epsilon \in \mathcal{I},\\
1, & \epsilon \geq \epsilon_{\max},
\end{cases}
\end{equation}
where $\var(\lambda(\mbQ)) = \frac{1}{N} \sum_{n=1}^{N} (\lambda_{n}(\mbQ) - \lambdab(\mbQ))^{2}$ is the variance of the eigenvalues of $\mbQ$, $\epsilon_{\max} = \lambda_{\max}(\mbQ) / \lambdab(\mbQ) - 1$,
\begin{equation}\label{eqn:epsInterval}
\mathcal{I} = \begin{cases}
(\epsilon_{-}, \epsilon_{+}), &(\lambda_{\max}(\mbQ) - \lambdab(\mbQ))^{2} > 8\var(\lambda(\mbQ)),\\
\emptyset, &(\lambda_{\max}(\mbQ) - \lambdab(\mbQ))^{2} \leq 8\var(\lambda(\mbQ)),
\end{cases}
\end{equation}
and
\[
\epsilon_{\pm} = \frac{1}{4} \left( \epsilon_{\max} \pm \sqrt{\epsilon_{\max}^{2} - 8 \frac{\var(\lambda(\mbQ))}{\lambdab(\mbQ)^{2}} } \right).
\]
\end{theorem}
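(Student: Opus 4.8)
The plan is to re-run the proof of Theorem~\ref{thm:diagRelaxEig} essentially verbatim, changing only the upper bound \eqref{eqn:E0UBEig} on $E_0(K)$; the stochastic hypothesis will let me replace $\lambda_{\max}(\mbQ/\mbQ_{Y(K)Y(K)})$ there by $(1+\epsilon)\lambdab(\mbQ)$, so that throughout the argument (including the definition of $\underline{K}$ and the ratio bound \eqref{eqn:KbarRatioUB}) the Schur-complement eigenvalue is superseded by the mean eigenvalue. The first step is to dispose of the Schur complement: since $\mbQ/\mbQ_{Y(K)Y(K)} = \mbQ_{Z(K)Z(K)} - \mbQ_{Z(K)Y(K)}(\mbQ_{Y(K)Y(K)})^{-1}\mbQ_{Y(K)Z(K)} \preceq \mbQ_{Z(K)Z(K)}$, the quadratic form bounding $E_0(K)$ satisfies $\mbc_{Z(K)}^T(\mbQ/\mbQ_{Y(K)Y(K)})\mbc_{Z(K)} \le \widetilde{\mbc}^T\mbQ\widetilde{\mbc}$, where $\widetilde{\mbc}$ agrees with $\mbc$ on $Z(K)$ and vanishes elsewhere, so $\norm[2]{\widetilde{\mbc}}^2 = S_K(\{c_n^2\})$. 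Writing $\mbQ = \mbV\mbLambda\mbV^T$ and setting $\mbp = \mbV^T\widetilde{\mbc}/\norm[2]{\widetilde{\mbc}}$, the Haar invariance of $\mbV$ makes $\mbp$ uniform on the unit sphere for the fixed direction $\widetilde{\mbc}$, whence $\widetilde{\mbc}^T\mbQ\widetilde{\mbc} = S_K(\{c_n^2\})\,\mbp^T\mbLambda\mbp$ with $\E[\mbp^T\mbLambda\mbp] = \lambdab(\mbQ)$. Because this direction does not affect the law of $\mbp$, I only need the concentration estimate once, at the single index $K = \underline{K}$ (now the largest $K$ with $(1+\epsilon)\lambdab(\mbQ)S_K(\{c_n^2\}) \le \gamma$); no union bound is required, and all other steps of Theorem~\ref{thm:diagRelaxEig} — in particular $K_d \le \overline{K}$ and $\overline{K} \le \lceil k\rceil - 1$ — carry over deterministically once $E_0(\underline{K}) \le \gamma$ is secured.

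The core is thus the tail bound for $\mbp^T\mbLambda\mbp$. I would represent $\mbp = \mbg/\norm[2]{\mbg}$ with $\mbg \sim \mathcal{N}(\0,\mbI)$, turning $\{\mbp^T\mbLambda\mbp > (1+\epsilon)\lambdab(\mbQ)\}$ into $\{\sum_n a_n g_n^2 > 0\}$ with $a_n = \lambda_n(\mbQ) - (1+\epsilon)\lambdab(\mbQ)$, for which $\sum_n a_n = -N\epsilon\lambdab(\mbQ)$ and $\sum_n a_n^2 = N(\var(\lambda(\mbQ)) + \epsilon^2\lambdab(\mbQ)^2)$. When $\epsilon \ge \epsilon_{\max}$ every $a_n \le 0$, the event is impossible, and the probability of the complement is $1$ — the third case of \eqref{eqn:approxRatioUBProb}. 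Otherwise $a_{\max} := \lambda_{\max}(\mbQ) - (1+\epsilon)\lambdab(\mbQ) > 0$, and the Gaussian moment generating function gives the Chernoff bound $\prod_n(1-2ta_n)^{-1/2}$, valid for $0 < t < 1/(2a_{\max})$; equivalently I must minimize $-\tfrac12\sum_n\log(1-2ta_n)$ over this interval.

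To evaluate the minimum I would use $-\log(1-x) \le x + x^2$ (valid for $x \le 3/5$, hence certainly for $2ta_n \le 2ta_{\max} < 1/2$), which bounds the exponent by $-tN\epsilon\lambdab(\mbQ) + 2t^2 N(\var(\lambda(\mbQ)) + \epsilon^2\lambdab(\mbQ)^2)$. Its unconstrained minimizer $t^{\ast} = \epsilon\lambdab(\mbQ)/\bigl(4(\var(\lambda(\mbQ))+\epsilon^2\lambdab(\mbQ)^2)\bigr)$ yields exactly the sub-Gaussian exponent $\tfrac{N}{8}\,\epsilon^2\lambdab(\mbQ)^2/(\epsilon^2\lambdab(\mbQ)^2 + \var(\lambda(\mbQ)))$ of the first case. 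This $t^{\ast}$ is admissible precisely when $2t^{\ast}a_{\max} \le 1/2$, which after clearing denominators is the quadratic inequality $2\lambdab(\mbQ)^2\epsilon^2 - \lambdab(\mbQ)(\lambda_{\max}(\mbQ)-\lambdab(\mbQ))\epsilon + \var(\lambda(\mbQ)) \ge 0$; its roots are $\epsilon_{\pm}$ and its discriminant is proportional to $(\lambda_{\max}(\mbQ)-\lambdab(\mbQ))^2 - 8\var(\lambda(\mbQ))$, reproducing the dichotomy for $\mathcal{I}$ in \eqref{eqn:epsInterval}. For $\epsilon \in \mathcal{I}$ the interior optimum is inadmissible, so I would instead evaluate the bound at the boundary value $t \sim 1/(4a_{\max})$; the defining inequality of $\mathcal{I}$ (which says the variance term is dominated by $\epsilon\lambdab(\mbQ)\,a_{\max}$) then collapses the exponent to the sub-exponential form $\tfrac{N}{8}\,\epsilon\lambdab(\mbQ)/(\epsilon\lambdab(\mbQ) + (\lambda_{\max}(\mbQ)-\lambdab(\mbQ)))$ of the second case.

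Substituting $E_0(\underline{K}) \le (1+\epsilon)\lambdab(\mbQ)S_{\underline{K}}(\{c_n^2\}) \le \gamma$ then gives $K^{\ast} \ge \underline{K}$ on the concentration event, while \eqref{eqn:KbarRatioUB} with $k = (\underline{K}+1)(1+\epsilon)\lambdab(\mbQ)/\lambda_{\min}(\mbQ)$ gives $\overline{K} \le \lceil k\rceil - 1$, delivering the asserted bound on $K_d/K^{\ast}$. I expect the main obstacle to lie in the third paragraph: matching the stated constants and denominators exactly — in particular producing the boundary-case form with $\epsilon\lambdab(\mbQ) + (\lambda_{\max}(\mbQ)-\lambdab(\mbQ))$ (rather than the sharper but endpoint-singular $a_{\max}$) and verifying that the admissibility quadratic aligns precisely with $\mathcal{I}$, $\epsilon_{\pm}$, and $\epsilon_{\max}$. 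The reduction to a sphere-uniform quadratic form and the sub-Gaussian branch are routine; the delicate bookkeeping is confined to the regime split in the Chernoff optimization.
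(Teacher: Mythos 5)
Your proposal is correct and follows essentially the same route as the paper's proof: the same reduction of $E_0(K)$ via $\mbQ/\mbQ_{YY} \preceq \mbQ_{ZZ}$ to a quadratic form in a uniformly distributed unit vector, the same Gaussian representation and Chernoff bound with the quadratic majorant of $-\tfrac12\log(1-2\delta_n t)$ on $[0,1/(4\delta_{\max})]$, the same unconstrained minimizer $t^{\ast}$, and the same admissibility quadratic producing $\mathcal{I}$, $\epsilon_{\pm}$, and the boundary-case exponent. Your explicit remarks that only a single concentration event at $K=\underline{K}$ is needed and that the second-case denominator is a deliberate relaxation of the sharper $\delta_{\max}$ are both consistent with (and slightly more careful than) the paper's exposition.
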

\begin{proof}
As noted above, it suffices to replace the bound in \eqref{eqn:E0UBEig} with 
\begin{equation}\label{eqn:E0UBEigRand}
E_{0}(K) \leq (1+\epsilon) \lambdab(\mbQ) S_{K}(\{c_{n}^{2}\})
\end{equation}
and show that \eqref{eqn:E0UBEigRand} holds with the probabilities indicated in the theorem statement.  The remainder of the proof proceeds as in Theorem \ref{thm:diagRelaxEig}.  First note that for $\epsilon \geq \epsilon_{\max}$, \eqref{eqn:E0UBEigRand} is implied by \eqref{eqn:E0UBEig} and is therefore true with probability $1$.  For $\epsilon \in (0,\epsilon_{\max})$, we use an upper bound on $E_{0}(K)$ to bound the probability that \eqref{eqn:E0UBEigRand} is violated.  Choosing the same subset $Z(K)$ as in Theorem \ref{thm:diagRelaxEig} and using the definition of the Schur complement, we have  
\begin{equation}\label{eqn:E0UBEigRand2}
E_{0}(K) \leq \mbc_{Z(K)}^{T} \mbQ_{Z(K)Z(K)} \mbc_{Z(K)}
= \mbct^{T} \mbLambda \mbct, \qquad \mbct = \mbV^{T} \begin{bmatrix} \mbc_{Z(K)} \\ \0 \end{bmatrix},
\end{equation}
where $\mbLambda$ is the diagonal matrix of eigenvalues of $\mbQ$.  The assumption on $\mbV$ implies that $\mbct$ is distributed uniformly over the sphere of radius $\sqrt{S_{K}(\{ c_{n}^{2} \})}$ centered at the origin.  Hence the quantity $\mbct^{T} \mbLambda \mbct$ can be equivalently expressed as $S_{K}(\{ c_{n}^{2} \}) (\mbz^{T} \mbLambda \mbz / \mbz^{T} \mbz)$, where the components of $\mbz$ are independent standard normal random variables.  

We now bound the probability that $\mbct^{T} \mbLambda \mbct > (1+\epsilon) \lambdab(\mbQ) S_{K}(\{ c_{n}^{2} \})$, which in turn bounds the probability that \eqref{eqn:E0UBEigRand} is not satisfied.  The event in question can be rewritten as 
\[
S = \sum_{n=1}^{N} \left[\lambda_{n}(\mbQ) - (1+\epsilon) \lambdab(\mbQ) \right] z_{n}^{2} \equiv \sum_{n=1}^{N} \delta_{n} z_{n}^{2} > 0.
\]
It can be seen that the expected value of $S$ is equal to $-\epsilon N \lambdab(\mbQ)$, and hence we are bounding the probability that a linear combination of independent chi-squared random variables exceeds its mean by $\epsilon N \lambdab(\mbQ)$.  A straightforward application of the Chernoff bound \cite{chernoff1952} yields 
\[
\log \Pr(S > 0) \leq \min_{0 \leq t < 1/(2\delta_{\max})} -\frac{1}{2} \sum_{n=1}^{N} \log(1 - 2\delta_{n} t),
\]
where $\delta_{\max} = \lambda_{\max}(\mbQ) - (1+\epsilon) \lambdab(\mbQ)$.  To derive a closed-form expression for the Chernoff exponent, the function $-(1/2) \log(1 - 2\delta_{n}t)$ is bounded from above by the quadratic function $2\delta_{n}^{2} t^{2} + \delta_{n} t$ over the interval $[0, 1/(4\delta_{\max})]$ (this upper bound can be verified by comparing derivatives over $[0, 1/(4\delta_{\max})]$).  It follows that 
\begin{equation}\label{eqn:chernoff}
\log \Pr(S > 0) \leq N \min_{0 \leq t \leq 1/(4\delta_{\max})} 2 \left( \var(\lambda(\mbQ)) + \epsilon^{2} \lambdab(\mbQ)^{2} \right) t^{2} - \epsilon \lambdab(\mbQ) t,
\end{equation}
using the definition of $\var(\lambda(\mbQ))$.  We consider the two cases in which the unconstrained minimizer $t^{\ast} = (1/4) \epsilon \lambdab(\mbQ) / (\var(\lambda(\mbQ)) + \epsilon^{2} \lambdab(\mbQ)^{2})$ is either less than or greater than $1/(4\delta_{\max})$.  These correspond to the first two cases in \eqref{eqn:approxRatioUBProb}.  In the first case, substituting $t = t^{\ast}$ into \eqref{eqn:chernoff} yields the exponent in \eqref{eqn:approxRatioUBProb} directly, while in the second case, the exponent in \eqref{eqn:approxRatioUBProb} results from substituting $t = 1/(4\delta_{\max})$ in \eqref{eqn:chernoff} and then using the assumed inequality $t^{\ast} > 1/(4\delta_{\max})$.  Solving the boundary condition $t^{\ast} = 1/(4\delta_{\max})$ for $\epsilon$ yields the expression in \eqref{eqn:epsInterval} for the interval $\mathcal{I}$.
\qed\end{proof}

Theorem \ref{thm:diagRelaxEigRand} can be significantly less conservative than Theorem \ref{thm:diagRelaxEig}, in particular when most of the eigenvalues are small and comparable so that the mean eigenvalue of $\mbQ$ is much closer to the minimum eigenvalue than to the maximum eigenvalue.  This preference for eigenvalue distributions weighted toward small values is seen in the numerical results in Sect.~\ref{sec:numEx}.  Furthermore, it agrees with the following geometric intuition: Assuming that the ellipsoid $\mcE_{\mbQ}$ is not close to spherical ($\kappa(\mbQ)$ is large), it is preferable for most of the ellipsoid axes to be comparatively long (corresponding to small eigenvalues) and of the same order.  Such an ellipsoid tends to require a smaller coordinate-aligned enclosing ellipsoid, and consequently the diagonal relaxation tends to be a better approximation.  For example, in three dimensions, a severely oblate spheroid can be enclosed on average in a smaller coordinate-aligned ellipsoid than an equally severely prolate spheroid.  Note also that the exponents in \eqref{eqn:approxRatioUBProb} depend on the eigenvalue distribution and are larger (i.e., the decay is sharper) when the spread of the eigenvalues is small as measured by $\var(\lambda(\mbQ))$ or $\lambda_{\max}(\mbQ) - \lambdab(\mbQ)$.

\subsection{The diagonally dominant case}
\label{subsec:diagRelaxDiagDom}

We now consider the case in which the matrix $\mbQ$ is diagonally dominant, specifically in the sense that
\begin{equation}\label{eqn:QdiagDom}
\max_{m} \sum_{n\neq m} \frac{\abs{Q_{mn}}}{\sqrt{Q_{mm} Q_{nn}}} < 1,
\end{equation}
i.e., the absolute sum of the normalized off-diagonal entries in any row or column is small.  It is expected in this case that the original problem \eqref{eqn:sparseProb} can be well-approximated by its diagonal relaxation, and that the quality of approximation depends on the degree of diagonal dominance.  Indeed, it can be shown that the maximum numbers of zero-valued components in \eqref{eqn:sparseProb} and its diagonal relaxation, $K^{\ast}$ and $K_{d}$ respectively, are bounded by the following quantities related to diagonal dominance:
\begin{subequations}\label{eqn:KbarDiagDom}
\begin{align}
\underline{K}_{\dd} &= \max \left\{ K : \left( 1 + \max_{m\in Z_{\dd}(K)} \sum_{\substack{n\in Z_{\dd}(K) \\ n\neq m}} \frac{\abs{Q_{mn}}}{\sqrt{Q_{mm} Q_{nn}}} \right) S_{K}(\{Q_{nn} c_{n}^{2}\}) \leq \gamma \right\}, \label{eqn:KunderbarDiagDom}\\
\overline{K}_{\dd} &= \max \left\{ K : \left( 1 - \max_{m} \sum_{n\neq m} \frac{\abs{Q_{mn}}}{\sqrt{Q_{mm} Q_{nn}}} \right) S_{K}(\{Q_{nn} c_{n}^{2}\}) \leq \gamma \right\}, \label{eqn:KoverbarDiagDom}
\end{align}
\end{subequations}
where $Z_{\dd}(K)$ in \eqref{eqn:KunderbarDiagDom} denotes the index set corresponding to the $K$ smallest $Q_{nn} c_{n}^{2}$.  A bound on the approximation ratio $K_{d} / K^{\ast}$ follows.

\begin{theorem}\label{thm:diagRelaxDiagDom}
Assume that the matrix $\mbQ$ is diagonally dominant in the sense of \eqref{eqn:QdiagDom}.  Then the maximum numbers of zero-valued components in problem \eqref{eqn:sparseProb} and its diagonal relaxation, $K^{\ast}$ and $K_{d}$ respectively, satisfy the ordering $\underline{K}_{\dd} \leq K^{\ast} \leq K_{d} \leq \overline{K}_{\dd}$, where $\underline{K}_{\dd}$ and $\overline{K}_{\dd}$ are defined in \eqref{eqn:KbarDiagDom}.  The approximation ratio $K_{d} / K^{\ast}$ is bounded as follows:
\begin{equation}\label{eqn:approxRatioUBDiagDom}
\frac{K_{d}}{K^{\ast}} \leq \frac{\overline{K}_{\dd}}{\underline{K}_{\dd}} \leq 
\frac{\left\lceil (\underline{K}_{\dd} + 1) r_{\dd} \right\rceil - 1}{\underline{K}_{\dd}},
\end{equation}
where
\[
r_{\dd} = \left( 1 + \max_{m\in Z_{\dd}(\underline{K}_{\dd}+1)} \sum_{\substack{n\in Z_{\dd}(\underline{K}_{\dd}+1) \\ n \neq m}} \frac{\abs{Q_{mn}}}{\sqrt{Q_{mm} Q_{nn}}} \right) \left/ 
\left( 1 - \max_m \sum_{n \neq m} \frac{\abs{Q_{mn}}}{\sqrt{Q_{mm} Q_{nn}}} \right) \right..
\]
\end{theorem}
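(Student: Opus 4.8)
The plan is to mirror the proof of Theorem \ref{thm:diagRelaxEig}, replacing its two eigenvalue-based estimates with estimates tailored to the diagonal-dominance structure \eqref{eqn:QdiagDom}. As in that proof, $K^{\ast}$ is the largest $K$ for which $E_{0}(K) \leq \gamma$ in \eqref{eqn:feasTestK} and $K_{d}$ is the largest $K$ for which $E_{d}(K) \leq \gamma$ in \eqref{eqn:maxSumSmallest}, so the ordering $\underline{K}_{\dd} \leq K^{\ast} \leq K_{d} \leq \overline{K}_{\dd}$ will follow from an upper bound on $E_{0}(K)$ (yielding $K^{\ast} \geq \underline{K}_{\dd}$) together with a lower bound on $E_{d}(K)$ (yielding $K_{d} \leq \overline{K}_{\dd}$). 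Throughout I write $\rho_{mn} = \abs{Q_{mn}}/\sqrt{Q_{mm}Q_{nn}}$ and $d_{n} = Q_{nn}c_{n}^{2}$.

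For the upper bound on $E_{0}(K)$, I would select the specific subset $Z = Z_{\dd}(K)$ of the $K$ smallest $d_{n}$, giving $E_{0}(K) \leq \mbc_{Z}^{T}(\mbQ/\mbQ_{YY})\mbc_{Z}$. Since the subtracted Schur term is positive semidefinite, $\mbQ/\mbQ_{YY} \preceq \mbQ_{ZZ}$, so it suffices to bound $\mbc_{Z}^{T}\mbQ_{ZZ}\mbc_{Z} = \sum_{m\in Z} d_{m} + \sum_{m,n\in Z,\,m\neq n} Q_{mn}c_{m}c_{n}$. Applying $\abs{Q_{mn}}\abs{c_{m}}\abs{c_{n}} = \rho_{mn}\sqrt{d_{m}d_{n}} \leq \tfrac{1}{2}\rho_{mn}(d_{m}+d_{n})$ to the off-diagonal terms and using the symmetry $\rho_{mn}=\rho_{nm}$ collapses the double sum to $\sum_{m\in Z} d_{m}\sum_{n\in Z,\,n\neq m}\rho_{mn}$. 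Bounding the inner sum by its maximum over $m \in Z_{\dd}(K)$ produces exactly the coefficient in \eqref{eqn:KunderbarDiagDom} times $S_{K}(\{Q_{nn}c_{n}^{2}\})$, from which $K^{\ast} \geq \underline{K}_{\dd}$ follows by the definition of $\underline{K}_{\dd}$.

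For the lower bound on $E_{d}(K)$, I would exhibit the feasible diagonal matrix $\mbD = \beta\,\diag(Q_{11},\ldots,Q_{NN})$ with $\beta = 1 - \max_{m}\sum_{n\neq m}\rho_{mn}$, which is strictly positive precisely by the diagonal-dominance hypothesis \eqref{eqn:QdiagDom}. Feasibility, namely $\0 \preceq \mbD \preceq \mbQ$, follows by a congruence with $\diag(Q_{11},\ldots,Q_{NN})^{-1/2}$ that reduces $\mbD \preceq \mbQ$ to $\beta\mbI \preceq \mbR$, where $\mbR$ has unit diagonal and off-diagonal entries $\rho_{mn}$; Gershgorin's theorem then gives $\lambda_{\min}(\mbR) \geq \beta$. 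Because $\mbD$ rescales every $d_{n}$ by the common factor $\beta$, the index set of the $K$ smallest entries is unchanged and $E_{d}(K) \geq S_{K}(\{D_{nn}c_{n}^{2}\}) = \beta\,S_{K}(\{Q_{nn}c_{n}^{2}\})$, whence $K_{d} \leq \overline{K}_{\dd}$ from the definition of $\overline{K}_{\dd}$.

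The ratio bound \eqref{eqn:approxRatioUBDiagDom} then follows from the orderings together with an argument identical in form to \eqref{eqn:KbarRatioUB}. From the definition of $\underline{K}_{\dd}$, the coefficient in \eqref{eqn:KunderbarDiagDom} evaluated at $K = \underline{K}_{\dd}+1$ times $S_{\underline{K}_{\dd}+1}(\{Q_{nn}c_{n}^{2}\})$ exceeds $\gamma$; this coefficient is exactly $\beta\,r_{\dd}$, so setting $k = (\underline{K}_{\dd}+1)r_{\dd} \geq \underline{K}_{\dd}+1$ (note $r_{\dd}\geq 1$ since the numerator factor is $\geq 1$ and $\beta \leq 1$) and using that the mean of the smallest elements is non-decreasing in the number included, one gets $\beta\,S_{\lceil k\rceil}(\{Q_{nn}c_{n}^{2}\}) > \gamma$, hence $\overline{K}_{\dd} \leq \lceil k\rceil - 1 = \lceil (\underline{K}_{\dd}+1)r_{\dd}\rceil - 1$. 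I expect the AM--GM/diagonal-dominance estimate on $\mbc_{Z}^{T}\mbQ_{ZZ}\mbc_{Z}$ to be the only genuinely new ingredient relative to Theorem \ref{thm:diagRelaxEig}; the Gershgorin feasibility step and the $\lceil k\rceil$ ratio manipulation are short once that estimate is in hand.
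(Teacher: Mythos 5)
Your proposal is correct and follows essentially the same route as the paper: choose $Z = Z_{\dd}(K)$ and drop the Schur correction to upper-bound $E_{0}(K)$, use the feasible point $\beta\,\Diag(\mbQ)$ (the paper packages this as Lemma~\ref{lem:maxSumSmallestLBD0} with $\mbD_{0}=\Diag(\mbQ)$ plus Gershgorin) to lower-bound $E_{d}(K)$, and repeat the ceiling manipulation of \eqref{eqn:KbarRatioUB}. The only cosmetic difference is that you bound $\mbc_{Z}^{T}\mbQ_{ZZ}\mbc_{Z}$ directly by AM--GM on the off-diagonal terms, where the paper rescales to $\mbQt_{ZZ}$ and applies $\lambda_{\max}$ together with the Gershgorin circle theorem; the two estimates coincide.
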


The ratio $r_{\dd}$ in Theorem \ref{thm:diagRelaxDiagDom} plays the same role as the eigenvalue ratio 
%$\lambda_{\max}(\mbQ / \mbQ_{\mcY(\underline{K}+1)\mcY(\underline{K}+1)}) / \lambda_{\min}(\mbQ)$
in Theorem \ref{thm:diagRelaxEig}.  As $\mbQ$ becomes more diagonally dominant, $r_{\dd}$ approaches $1$ from above.  Unlike with Theorem \ref{thm:diagRelaxEig}, there is no benefit to allowing diagonal scaling transformations because the measure of diagonal dominance used here remains unchanged when $\mbQ$ is replaced by $\mbS^{-1} \mbQ \mbS^{-1}$.

To prove the inequality $K_{d} \leq \overline{K}_{\dd}$, we use the following lemma, which specifies the optimal cost of \eqref{eqn:maxSumSmallest} under the additional constraint that $\mbD$ is a multiple of a fixed diagonal matrix.
\begin{lemma}\label{lem:maxSumSmallestLBD0}
For any positive definite diagonal matrix $\mbD_{0}$, the optimal cost $E_{d}(K)$ in \eqref{eqn:maxSumSmallest} is bounded from below by $\lambda_{\min}(\mbD_{0}^{-1/2} \mbQ \mbD_{0}^{-1/2}) S_{K}(\{ (\mbD_{0})_{nn} c_{n}^{2} \})$.
\end{lemma}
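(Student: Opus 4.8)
The plan is to exhibit an explicit feasible matrix in \eqref{eqn:maxSumSmallest} that is a scalar multiple of $\mbD_{0}$ and whose objective value equals the claimed bound, thereby establishing the inequality by weak duality of the maximization. This directly generalizes the lower-bound construction in Lemma \ref{lem:maxSumSmallestLB}, which is recovered as the special case $\mbD_{0} = \mbI$. Concretely, I would set $\mbD = t\,\mbD_{0}$ with $t = \lambda_{\min}(\mbD_{0}^{-1/2} \mbQ \mbD_{0}^{-1/2})$, i.e., the largest scalar for which $t\,\mbD_{0}$ still fits inside $\mbQ$ in the semidefinite ordering.

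First I would verify that this $\mbD$ is feasible for \eqref{eqn:maxSumSmallest}. It is diagonal by construction. Positivity holds because $\mbQ \succ \0$ and $\mbD_{0} \succ \0$ force $\mbD_{0}^{-1/2} \mbQ \mbD_{0}^{-1/2} \succ \0$, so $t > 0$, whence $t\,\mbD_{0} \succ \0$. For the upper constraint $t\,\mbD_{0} \preceq \mbQ$, the key observation is that congruence by the invertible matrix $\mbD_{0}^{-1/2}$ preserves the positive semidefinite ordering: $t\,\mbD_{0} \preceq \mbQ$ is equivalent to $\mbD_{0}^{-1/2}(\mbQ - t\,\mbD_{0})\mbD_{0}^{-1/2} \succeq \0$, that is, to $t\,\mbI \preceq \mbD_{0}^{-1/2}\mbQ\mbD_{0}^{-1/2}$. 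This last inequality holds precisely because $t$ was chosen to be the smallest eigenvalue of $\mbD_{0}^{-1/2}\mbQ\mbD_{0}^{-1/2}$. I would state this congruence-invariance of the ordering explicitly, since it is the crux of the argument and explains why the correct scale factor is exactly this preconditioned minimum eigenvalue.

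Finally I would evaluate the objective at this $\mbD$. Since $D_{nn} = t\,(\mbD_{0})_{nn}$, the sequence $\{D_{nn} c_{n}^{2}\}$ is the sequence $\{(\mbD_{0})_{nn} c_{n}^{2}\}$ scaled uniformly by $t > 0$; a positive common scaling preserves the relative ordering of the entries, so $S_{K}$ is positively homogeneous and $S_{K}(\{t\,(\mbD_{0})_{nn} c_{n}^{2}\}) = t\,S_{K}(\{(\mbD_{0})_{nn} c_{n}^{2}\})$. Because $\mbD = t\,\mbD_{0}$ is feasible, $E_{d}(K)$ is at least this value, giving $E_{d}(K) \geq \lambda_{\min}(\mbD_{0}^{-1/2}\mbQ\mbD_{0}^{-1/2})\,S_{K}(\{(\mbD_{0})_{nn} c_{n}^{2}\})$, as asserted. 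There is no substantive obstacle: the result is a one-line feasibility construction, and the only step deserving care is the congruence equivalence that pins down $t$ as the minimum eigenvalue of the preconditioned matrix.
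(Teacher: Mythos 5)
Your proposal is correct and follows essentially the same route as the paper: the paper also restricts $\mbD$ to scalar multiples $\alpha\mbD_{0}$, rewrites the constraint $\0 \preceq \alpha\mbD_{0} \preceq \mbQ$ via congruence by $\mbD_{0}^{-1/2}$ as $\0 \preceq \alpha\mbI \preceq \mbD_{0}^{-1/2}\mbQ\mbD_{0}^{-1/2}$, and takes $\alpha$ to be the smallest eigenvalue of the preconditioned matrix. Your additional remarks on positive homogeneity of $S_{K}$ and the positivity of the scale factor make explicit what the paper leaves implicit, but the argument is the same.
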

\begin{proof}
We restrict $\mbD$ in \eqref{eqn:maxSumSmallest} to be a multiple of $\mbD_{0}$, thus obtaining a lower bound on $E_{d}(K)$.  With $\mbD = \alpha \mbD_{0}$, \eqref{eqn:maxSumSmallest} reduces to
\[
\max_{\alpha} \quad \alpha S_{K}(\{ (\mbD_{0})_{nn} c_{n}^{2} \}) \qquad 
\text{s.t.} \qquad \0 \preceq \alpha\mbD_{0} \preceq \mbQ.
\]
Since $\mbD_{0}$ is invertible, the constraint can be rewritten as $\0 \preceq \alpha\mbI \preceq \mbD_{0}^{-1/2} \mbQ \mbD_{0}^{-1/2}$, from which it follows that $\alpha$ should be chosen as the smallest eigenvalue of $\mbD_{0}^{-1/2} \mbQ \mbD_{0}^{-1/2}$.
\qed\end{proof}

We now proceed with the proof of Theorem \ref{thm:diagRelaxDiagDom}.

\begin{proof}[Theorem \ref{thm:diagRelaxDiagDom}]
%{\em Proof of Theorem \ref{thm:diagRelaxDiagDom}}.
To prove that $K_{d} \leq \overline{K}_{\dd}$, we let $\mbD_{0} = \Diag(\mbQ)$ in Lemma \ref{lem:maxSumSmallestLBD0}, where $\Diag(\mbQ)$ denotes a diagonal matrix with the same diagonal entries as $\mbQ$.  Using the Gershgorin circle theorem \cite{hornjohnson1994} to bound the smallest eigenvalue of $\mbQt = \Diag(\mbQ)^{-1/2} \mbQ \Diag(\mbQ)^{-1/2}$, we then obtain 
\[
E_{d}(K) \geq \left( 1 - \max_{m} \sum_{n\neq m} \frac{\abs{Q_{mn}}}{\sqrt{Q_{mm} Q_{nn}}} \right) S_{K}(\{Q_{nn} c_{n}^{2}\}),
\]
from which we infer that $K_{d} \leq \overline{K}_{\dd}$ based on \eqref{eqn:KoverbarDiagDom}.

To prove that $K^{\ast} \geq \underline{K}_{\dd} $, the quantity $E_{0}(K)$ in \eqref{eqn:feasTestK} is bounded from above as follows, starting with the specific choice of subset $Z = Z_{\dd}(K)$:
\begin{align*}
E_{0}(K) &\leq \mbc_{Z_{\dd}(K)}^T( \mbQ / \mbQ_{Y_{\dd}(K)Y_{\dd}(K)} ) \mbc_{Z_{\dd}(K)}\\
&\leq \mbc_{Z_{\dd}(K)}^T \mbQ_{Z_{\dd}(K)Z_{\dd}(K)} \mbc_{Z_{\dd}(K)}\\
&= (\Diag(\mbQ)^{1/2} \mbc)_{Z_{\dd}(K)}^{T} 
\mbQt_{Z_{\dd}(K)Z_{\dd}(K)} 
(\Diag(\mbQ)^{1/2} \mbc)_{Z_{\dd}(K)}\\
&\leq \lambda_{\max}( \mbQt_{Z_{\dd}(K)Z_{\dd}(K)} ) S_{K}(\{Q_{nn} c_{n}^{2}\})\\
&\leq \left( 1 + \max_{m\in Z_{\dd}(K)} \sum_{\substack{n\in Z_{\dd}(K) \\ n\neq m}} \frac{\abs{Q_{mn}}}{\sqrt{Q_{mm} Q_{nn}}} \right) S_{K}(\{Q_{nn} c_{n}^{2}\}).
\end{align*}
The second line follows from the definition of the Schur complement, the third from a rescaling, the fourth from eigenvalue properties and the definition of $Z_{\dd}(K)$, and the last from the Gershgorin circle theorem.  Comparing with \eqref{eqn:KunderbarDiagDom}, we conclude that $K^{\ast} \geq \underline{K}_{\dd}$.  The proof of the bound on $\overline{K}_{\dd} / \underline{K}_{\dd}$ is similar to that in Theorem \ref{thm:diagRelaxEig}.
%\end{proof}
\qed\end{proof}

As with Theorem \ref{thm:diagRelaxEig}, there exist instances for which the left-hand bound in \eqref{eqn:approxRatioUBDiagDom} is tight and the right-hand bound is asymptotically tight.  We consider the same class of instances as in Sect.~\ref{subsec:linRelaxEx} with $\mbc = \mbe$, $\gamma = 1$, and $\mbv$ having $\lceil N/2 \rceil$ components equal to $+1/\sqrt{N}$ and $\lfloor N/2 \rfloor$ components equal to $-1/\sqrt{N}$.  From \eqref{eqn:Qex} we obtain $Q_{nn} = (N-1)\lambda_{2}/N + \lambda_{1}/N$ for all $n$ and $\abs{Q_{mn}} = (\lambda_{2} - \lambda_{1})/N$ for all $m \neq n$, from which it follows that 
\begin{align*}
\left( 1 - \max_{m} \sum_{n\neq m} \frac{\abs{Q_{mn}}}{\sqrt{Q_{mm} Q_{nn}}} \right) S_{K}(\{Q_{nn} c_{n}^{2}\}) &= K \lambda_{1},\\
\left( 1 + \max_{m\in Z} \sum_{\substack{n\in Z \\ n\neq m}} \frac{\abs{Q_{mn}}}{\sqrt{Q_{mm} Q_{nn}}} \right) S_{K}(\{Q_{nn} c_{n}^{2}\}) 
&= K \left( \lambda_{2} + \frac{K-2}{N} (\lambda_{2} - \lambda_{1}) \right)
\end{align*}
for any $Z$ of cardinality $K$.  Choosing $\lambda_{1} = 1/N$ and $\lambda_{2} = 1/N + 1/((N-1)(2N-3))$, some straightforward calculations yield $\underline{K}_{\dd} = N-1$ and $\overline{K}_{\dd} = N$ from \eqref{eqn:KbarDiagDom}, and $r_{\dd} = 1 + 2/(2N-3)$ for the ratio defined in Theorem \ref{thm:diagRelaxDiagDom}.  It can then be seen that the right-hand inequality in \eqref{eqn:approxRatioUBDiagDom} reads $N/(N-1) \leq (N+1)/(N-1)$ for $N \geq 3$, which is asymptotically tight as $N \to \infty$.

To show that the left-hand inequality in \eqref{eqn:approxRatioUBDiagDom} is tight, we note that the construction satisfies the assumptions of Lemma \ref{lem:maxSumSmallestLB} so we again have $E_{d}(K) = K\lambda_{\min}(\mbQ) = K/N$ and $K_{d} = N = \overline{K}_{\dd}$.  The remaining required equality $K^{\ast} = \underline{K}_{\dd} = N-1$ is equivalent to the all-zero solution being infeasible for \eqref{eqn:sparseProb}, i.e., $\mbc^{T} \mbQ \mbc > \gamma = 1$.  Using \eqref{eqn:Qex} and substituting the selected parameter values, we find $\mbc^{T} \mbQ \mbc = 1 + N/((N-1)(2N-3)) > 1$ for $N$ even and $\mbc^{T} \mbQ \mbc = 1 + (N+1)/(N(2N-3)) > 1$ for $N$ odd, completing the demonstration.

\subsection{The nearly coordinate-aligned case}
\label{subsec:diagRelaxNAA}

A geometric analogue to diagonal dominance is the case in which the axes of the ellipsoid $\mcE_{\mbQ}$ are nearly aligned with the coordinate axes.  Algebraically, this corresponds to the eigenvectors of $\mbQ$ being close to the standard basis vectors.  We assume that $\mbQ$ is diagonalized as $\mbQ = \mbV \mbLambda \mbV^{T}$, where the eigenvalues $\lambda_{n}(\mbQ)$ and the eigenvector matrix $\mbV$ are ordered in such a way that $\mbDelta = \mbV - \mbI$ is small, specifically in the sense that its spectral radius $\rho(\mbDelta)$ satisfies $\kappa(\mbQ) \rho(\mbDelta) < 1$.  It is expected in this case that the diagonal relaxation would give a better approximation for smaller $\mbDelta$, i.e., for closer alignments.  Following the approach in Sect.~\ref{subsec:diagRelaxEig}--\ref{subsec:diagRelaxDiagDom}, it is shown that $K^{\ast}$ and $K_{d}$ may be bounded by 
\begin{subequations}\label{eqn:KbarNAA}
\begin{align}
\underline{K}_{\na} &= \max \left\{ K : ( 1 + \kappa(\mbQ) (\rho(\mbDelta) + \rho(\mbDelta)^{2}) ) S_{K}(\{\lambda_{n}(\mbQ) c_{n}^{2}\}) \leq \gamma \right\}, \label{eqn:KunderbarNAA}\\
\overline{K}_{\na} &= \max \left\{ K : ( 1 - \kappa(\mbQ) \rho(\mbDelta) ) S_{K}(\{\lambda_{n}(\mbQ) c_{n}^{2}\}) \leq \gamma \right\}. \label{eqn:KoverbarNAA}
\end{align}
\end{subequations}
%
%which depend on $\mbDelta$.  
The approximation ratio $K_{d} / K^{\ast}$ may be bounded accordingly.

\begin{theorem}\label{thm:diagRelaxNAA}
Assume that the matrix $\mbQ$ can be diagonalized as $\mbQ = (\mbI + \mbDelta) \mbLambda (\mbI + \mbDelta)^{T}$, where $\mbDelta$ is such that $\kappa(\mbQ) \rho(\mbDelta) < 1$.  Then the maximum numbers of zero-valued components in \eqref{eqn:sparseProb} and its diagonal relaxation, $K^{\ast}$ and $K_{d}$ respectively, satisfy the ordering $\underline{K}_{\na} \leq K^{\ast} \leq K_{d} \leq \overline{K}_{\na}$, where $\underline{K}_{\na}$ and $\overline{K}_{\na}$ are defined in \eqref{eqn:KbarNAA}.  The approximation ratio $K_{d} / K^{\ast}$ is bounded as follows:
\begin{equation}\label{eqn:approxRatioUBNAA}
\frac{K_{d}}{K^{\ast}} \leq \frac{\overline{K}_{\na}}{\underline{K}_{\na}} \leq 
\frac{\left\lceil (\underline{K}_{\na} + 1) r_{\na} \right\rceil - 1}{\underline{K}_{\na}},
\end{equation}
where
\[
r_{\na} = \frac{1 + \kappa(\mbQ) (\rho(\mbDelta) + \rho(\mbDelta)^{2})}{1 - \kappa(\mbQ) \rho(\mbDelta)}.
\]
\end{theorem}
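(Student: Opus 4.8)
The plan is to follow exactly the template established in Theorems \ref{thm:diagRelaxEig} and \ref{thm:diagRelaxDiagDom}: prove the two outer inequalities $\underline{K}_{\na} \leq K^{\ast}$ and $K_{d} \leq \overline{K}_{\na}$ separately, deduce the middle inequality $K^{\ast} \leq K_{d}$ from the enclosing property $\mcE_{\mbQ} \subseteq \mcE_{\mbD}$, and then obtain the approximation-ratio bound \eqref{eqn:approxRatioUBNAA} from the ordering by the same ceiling-plus-monotone-mean argument used at the end of Theorem \ref{thm:diagRelaxEig}. The new ingredient that adapts that template to the present case is to rescale by the \emph{eigenvalue} matrix $\mbLambda$ rather than by $\Diag(\mbQ)$: since $\mbLambda$ is itself a positive definite diagonal matrix, rescaling $\mbc$ by $\mbLambda^{1/2}$ produces precisely the weights $\lambda_{n}(\mbQ)c_{n}^{2}$ appearing in $S_{K}(\{\lambda_{n}(\mbQ)c_{n}^{2}\})$, and it converts $\mbQ$ into a perturbation of the identity. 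Writing $\mbQ = (\mbI + \mbDelta)\mbLambda(\mbI+\mbDelta)^{T}$, one checks that $\mbLambda^{-1/2}\mbQ\mbLambda^{-1/2} = (\mbI+\mbDeltat)(\mbI+\mbDeltat)^{T}$ with $\mbDeltat = \mbLambda^{-1/2}\mbDelta\mbLambda^{1/2}$, so both directions will reduce to controlling the extreme eigenvalues of this single rescaled matrix.

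For the lower bound $K^{\ast} \geq \underline{K}_{\na}$, I would upper-bound $E_{0}(K)$ in \eqref{eqn:feasTestK} by choosing $Z$ to index the $K$ smallest $\lambda_{n}(\mbQ)c_{n}^{2}$, using the Schur-complement inequality $\mbQ/\mbQ_{YY} \preceq \mbQ_{ZZ}$ to pass to the principal submatrix, and then rescaling:
\[
E_{0}(K) \leq \mbc_{Z}^{T}\mbQ_{ZZ}\mbc_{Z} \leq \lambda_{\max}\!\left(\mbLambda^{-1/2}\mbQ\mbLambda^{-1/2}\right) S_{K}\!\left(\{\lambda_{n}(\mbQ)c_{n}^{2}\}\right),
\]
exactly paralleling \eqref{eqn:E0UBEig} and the chain in the proof of Theorem \ref{thm:diagRelaxDiagDom}. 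For the upper bound $K_{d} \leq \overline{K}_{\na}$, I would invoke Lemma \ref{lem:maxSumSmallestLBD0} with $\mbD_{0} = \mbLambda$, which immediately gives $E_{d}(K) \geq \lambda_{\min}(\mbLambda^{-1/2}\mbQ\mbLambda^{-1/2})\,S_{K}(\{\lambda_{n}(\mbQ)c_{n}^{2}\})$. Comparing these two estimates with the definitions \eqref{eqn:KunderbarNAA} and \eqref{eqn:KoverbarNAA} then yields the outer inequalities, provided the two eigenvalue factors can be bounded by $1 + \kappa(\mbQ)(\rho(\mbDelta)+\rho(\mbDelta)^{2})$ and $1 - \kappa(\mbQ)\rho(\mbDelta)$ respectively.

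The crux, and the step I expect to be the main obstacle, is therefore the perturbation estimate on the extreme eigenvalues of $(\mbI+\mbDeltat)(\mbI+\mbDeltat)^{T} = \mbI + (\mbDeltat + \mbDeltat^{T}) + \mbDeltat\mbDeltat^{T}$. The essential observation is that $\mbV = \mbI + \mbDelta$ is orthogonal, so $\mbDelta$ is normal and hence $\rho(\mbDelta) = \norm[2]{\mbDelta}$; this is what lets the spectral-radius hypothesis $\kappa(\mbQ)\rho(\mbDelta) < 1$ be turned into an operator-norm bound, giving $\norm[2]{\mbDeltat} \leq \sqrt{\lambda_{\max}(\mbQ)/\lambda_{\min}(\mbQ)}\,\rho(\mbDelta)$ by submultiplicativity. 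Applying Weyl's inequalities, the positive semidefinite second-order term $\mbDeltat\mbDeltat^{T}$ contributes the $\kappa(\mbQ)\rho(\mbDelta)^{2}$ factor (and only raises $\lambda_{\min}$, so it may be dropped in the lower bound), while the symmetric first-order term $\mbDeltat + \mbDeltat^{T}$ contributes the $\kappa(\mbQ)\rho(\mbDelta)$ factor. The delicate point is the precise accounting of this first-order term: a naive triangle-inequality split risks losing a factor, so one must group $\mbDeltat + \mbDeltat^{T} = \mbLambda^{-1/2}(\mbDelta\mbLambda + \mbLambda\mbDelta^{T})\mbLambda^{-1/2}$ and bound it so as to land exactly on the stated constants. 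Once the bounds $\lambda_{\max}(\mbLambda^{-1/2}\mbQ\mbLambda^{-1/2}) \leq 1 + \kappa(\mbQ)(\rho(\mbDelta)+\rho(\mbDelta)^{2})$ and $\lambda_{\min}(\mbLambda^{-1/2}\mbQ\mbLambda^{-1/2}) \geq 1 - \kappa(\mbQ)\rho(\mbDelta)$ are in hand, the ratio estimate \eqref{eqn:approxRatioUBNAA} follows verbatim from the argument in Theorem \ref{thm:diagRelaxEig}: from the definition of $\underline{K}_{\na}$ one has $(1+\kappa(\mbQ)(\rho(\mbDelta)+\rho(\mbDelta)^{2}))S_{\underline{K}_{\na}+1} > \gamma$, and writing the left side as $(1-\kappa(\mbQ)\rho(\mbDelta))\,r_{\na}(\underline{K}_{\na}+1)\,S_{\underline{K}_{\na}+1}/(\underline{K}_{\na}+1)$ and using that the mean of the smallest elements is non-decreasing in their number gives $\overline{K}_{\na} \leq \lceil (\underline{K}_{\na}+1)r_{\na}\rceil - 1$.
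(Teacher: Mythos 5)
Your overall architecture matches the paper's exactly: the paper also proves $K_{d} \leq \overline{K}_{\na}$ by invoking Lemma \ref{lem:maxSumSmallestLBD0} with $\mbD_{0} = \mbLambda$, proves $K^{\ast} \geq \underline{K}_{\na}$ by fixing the subset of the $K$ smallest $\lambda_{n}(\mbQ)c_{n}^{2}$, passing to $\mbc_{Z}^{T}\mbQ_{ZZ}\mbc_{Z}$, and rescaling by $\mbLambda^{1/2}$, and then obtains \eqref{eqn:approxRatioUBNAA} by the same ceiling argument as in Theorem \ref{thm:diagRelaxEig}. Both reductions hinge, as you say, on the two eigenvalue estimates $\lambda_{\min}(\mbLambda^{-1/2}\mbQ\mbLambda^{-1/2}) \geq 1 - \kappa(\mbQ)\rho(\mbDelta)$ and $\lambda_{\max}(\mbLambda^{-1/2}\mbQ\mbLambda^{-1/2}) \leq 1 + \kappa(\mbQ)(\rho(\mbDelta)+\rho(\mbDelta)^{2})$, which the paper isolates as Lemma \ref{lem:diagRelaxNAA}.

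The gap is that you never actually establish those two estimates, and the route you gesture at does not reach the stated constants. A norm bound on $\mbDeltat+\mbDeltat^{T}$ via $\norm[2]{\mbDeltat} \leq \sqrt{\kappa(\mbQ)}\,\rho(\mbDelta)$ gives $2\sqrt{\kappa(\mbQ)}\,\rho(\mbDelta)$, and your proposed regrouping $\mbDeltat+\mbDeltat^{T} = \mbLambda^{-1/2}(\mbDelta\mbLambda+\mbLambda\mbDelta^{T})\mbLambda^{-1/2}$ followed by submultiplicativity gives $\lambda_{\min}(\mbQ)^{-1}\cdot 2\rho(\mbDelta)\lambda_{\max}(\mbQ) = 2\kappa(\mbQ)\rho(\mbDelta)$; either way you are off by a factor that cannot be absorbed, since the theorem's definitions of $\underline{K}_{\na}$, $\overline{K}_{\na}$, and $r_{\na}$ hard-code the constant $\kappa(\mbQ)\rho(\mbDelta)$ on the linear term. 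The extra factor of $2$ is removed in the paper by an argument that no symmetric-norm estimate can reproduce: since $\mbV = \mbI + \mbDelta$ is orthogonal, $\mbDelta$ is normal and its eigenvalues lie on the arc $\{e^{i\theta}-1\}$ near the origin, so they have \emph{non-positive} real parts (of order $\rho(\mbDelta)^{2}$). The paper diagonalizes $\mbDelta = \mbU\mbPsi\mbU^{H}$, notes $\mbDeltat = \mbUt\mbPsi\mbUt^{-1}$ with $\mbUt = \mbLambda^{-1/2}\mbU$, and applies a Bauer--Fike-type theorem: every (real) eigenvalue of $\mbDeltat+\mbDeltat^{T}$ lies within $\norm[2]{\mbUt^{-1}\mbDeltat^{T}\mbUt} \leq \kappa(\mbQ)\rho(\mbDelta)$ of some eigenvalue of $\mbDeltat$. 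The arc geometry then yields $\lambda_{\max}(\mbDeltat+\mbDeltat^{T}) \leq \kappa(\mbQ)\rho(\mbDelta)$ (the centers contribute nothing positive) and $\lambda_{\min}(\mbDeltat+\mbDeltat^{T}) \geq -\kappa(\mbQ)\rho(\mbDelta)$ (using $\kappa(\mbQ)\rho(\mbDelta)<1$). Without this, or an equivalent device exploiting the location of the spectrum of $\mbDelta$, your proof establishes the theorem only with $\kappa(\mbQ)\rho(\mbDelta)$ replaced by $2\kappa(\mbQ)\rho(\mbDelta)$ in \eqref{eqn:KbarNAA} and $r_{\na}$, which is a weaker statement than the one claimed.
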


Theorem \ref{thm:diagRelaxNAA} characterizes the quality of approximation in terms of the ratio $r_{\na}$.  As $\mbDelta$ approaches $\0$, $r_{\na}$ approaches $1$ as expected.  Similar to Theorem \ref{thm:diagRelaxEig}, Theorem \ref{thm:diagRelaxNAA} may be strengthened using diagonal scaling transformations since both $\rho(\mbDelta)$ and the condition number $\kappa(\mbQ)$ may decrease as $\mbQ$ is transformed into $\mbS^{-1} \mbQ \mbS^{-1}$ for different choices of $\mbS$.  The dependence on the condition number can be explained geometrically as illustrated in Fig.~\ref{fig:NAAkappa}.  On the left, the original ellipsoid $\mcE_{\mbQ}$ is both nearly coordinate-aligned and nearly spherical (i.e., $\kappa(\mbQ)$ is close to $1$), and can therefore be enclosed by a coordinate-aligned ellipsoid that is only slightly larger.  Indeed in the limit $\kappa(\mbQ) = 1$, $\mcE_{\mbQ}$ is spherical and thus already coordinate-aligned, and the eigenvector matrix $\mbV$ can be chosen equal to $\mbI$ resulting in $\mbDelta = \0$.  On the other hand, if $\kappa(\mbQ)$ is large, even a small misalignment between the ellipsoid and coordinate axes results in a much larger enclosing ellipsoid, as seen on the right in Fig.~\ref{fig:NAAkappa}.

\begin{figure}[ht]
\centering
\psfrag{t}[][]{$\theta$}
\includegraphics[width=0.7\columnwidth]{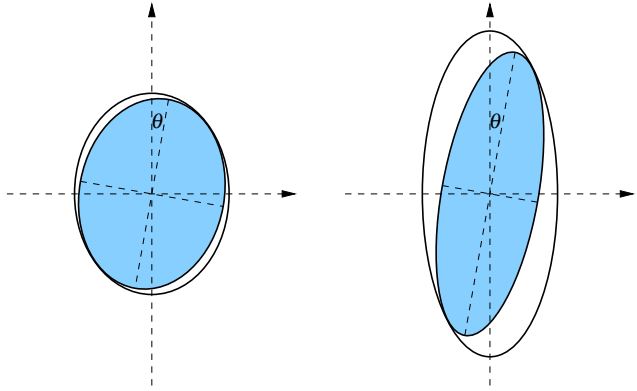}
\caption{The effect of the condition number $\kappa(\mbQ)$ on the approximation quality in the nearly coordinate-aligned case.  For the same angular offset $\theta$ between the axes of the original ellipsoid and the coordinate axes, the coordinate-aligned enclosing ellipsoid on the right is comparatively larger.}
\label{fig:NAAkappa}
\end{figure}

In the proof of Theorem \ref{thm:diagRelaxNAA} below, we make reference to the scaled matrix $\mbLambda^{-1/2} \mbQ \mbLambda^{-1/2}$.  When $\mbDelta$ is small, $\mbLambda^{-1/2} \mbQ \mbLambda^{-1/2}$ is close to the identity matrix and the deviation of its eigenvalues from $1$ is specified by the following lemma.
\begin{lemma}\label{lem:diagRelaxNAA}
Assume that the matrix $\mbQ$ can be diagonalized as $\mbQ = (\mbI + \mbDelta) \mbLambda (\mbI + \mbDelta)^{T}$, where $\mbDelta$ is such that $\kappa(\mbQ) \rho(\mbDelta) < 1$.  Then 
\begin{align*}
\lambda_{\min}(\mbLambda^{-1/2} \mbQ \mbLambda^{-1/2}) &\geq 1 - \kappa(\mbQ) \rho(\mbDelta),\\
\lambda_{\max}(\mbLambda^{-1/2} \mbQ \mbLambda^{-1/2}) &\leq 1 + \kappa(\mbQ) (\rho(\mbDelta) + \rho(\mbDelta)^{2}).
\end{align*}
\end{lemma}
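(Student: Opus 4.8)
The plan is to reduce the lemma to eigenvalue bounds for a single symmetric matrix and then to a norm estimate that exploits the orthogonality of the eigenvector matrix $\mbV = \mbI + \mbDelta$. First I would write the scaled matrix as a Gram matrix: since $\mbQ = (\mbI + \mbDelta)\mbLambda(\mbI + \mbDelta)^T$, one has $\mbLambda^{-1/2}\mbQ\mbLambda^{-1/2} = \mbM\mbM^T$ with $\mbM = \mbLambda^{-1/2}(\mbI + \mbDelta)\mbLambda^{1/2} = \mbI + \mbA$ and $\mbA = \mbLambda^{-1/2}\mbDelta\mbLambda^{1/2}$. Expanding gives
\[
\mbLambda^{-1/2}\mbQ\mbLambda^{-1/2} = \mbI + (\mbA + \mbA^T) + \mbA\mbA^T ,
\]
in which the last term is positive semidefinite. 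Applying Weyl's inequalities to this decomposition yields $\lambda_{\min}(\mbLambda^{-1/2}\mbQ\mbLambda^{-1/2}) \ge 1 + \lambda_{\min}(\mbA + \mbA^T)$ (the semidefinite term only helps and may be dropped) and $\lambda_{\max}(\mbLambda^{-1/2}\mbQ\mbLambda^{-1/2}) \le 1 + \lambda_{\max}(\mbA + \mbA^T) + \norm[2]{\mbA}^2$. This already reproduces the asymmetry in the statement: the lower bound carries no quadratic term, whereas the upper bound retains one.

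Next I would dispose of the quadratic term. Because $\mbV$ is orthogonal, $\mbDelta = \mbV - \mbI$ is normal, so $\norm[2]{\mbDelta} = \rho(\mbDelta)$; submultiplicativity then gives $\norm[2]{\mbA} \le \norm[2]{\mbLambda^{-1/2}}\,\norm[2]{\mbDelta}\,\norm[2]{\mbLambda^{1/2}} = \sqrt{\kappa(\mbQ)}\,\rho(\mbDelta)$ and hence $\norm[2]{\mbA}^2 \le \kappa(\mbQ)\rho(\mbDelta)^2$, matching the $\kappa\rho^2$ term. With this in hand, both stated bounds follow the instant one establishes the single key estimate
\[
\norm[2]{\mbA + \mbA^T} \le \kappa(\mbQ)\,\rho(\mbDelta) .
\]

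The main obstacle is precisely this estimate for the symmetric cross-term. The naive route $\norm[2]{\mbA + \mbA^T} \le 2\norm[2]{\mbA} \le 2\sqrt{\kappa(\mbQ)}\,\rho(\mbDelta)$ carries a spurious factor of two (and the wrong power of $\kappa$), and suffices only when $\kappa(\mbQ) \ge 4$; to reach the clean coefficient $\kappa$ one must use the orthogonality of $\mbV$ to extract a cancellation. Concretely, I would split $\mbDelta = \mbS + \mbK$ into its symmetric and skew parts and use the relation $\mbDelta + \mbDelta^T = -\mbDelta\mbDelta^T$ forced by $\mbV^T\mbV = \mbI$, which shows that $\mbS$ is second order, $\norm[2]{\mbS} \le \tfrac12\rho(\mbDelta)^2$. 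Writing $\mbA + \mbA^T$ entrywise as a Schur (Hadamard) product, the symmetric part contributes the multiplier $\sqrt{\lambda_j/\lambda_i} + \sqrt{\lambda_i/\lambda_j}$ acting on the small $\mbS$, while the skew part contributes $\sqrt{\lambda_j/\lambda_i} - \sqrt{\lambda_i/\lambda_j}$ acting on $\mbK$; the latter multiplier is bounded in magnitude by $\sqrt{\kappa(\mbQ)} - 1/\sqrt{\kappa(\mbQ)} \le \kappa(\mbQ)$, which is the source of the improved constant. Controlling the spectral norm of these Schur products -- equivalently, bounding $\norm[2]{\mbDelta\mbLambda + \mbLambda\mbDelta^T}$ without the factor of two -- is the delicate step. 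Once it is done, substituting the two norm bounds into the Weyl inequalities above produces $\lambda_{\min}(\mbLambda^{-1/2}\mbQ\mbLambda^{-1/2}) \ge 1 - \kappa(\mbQ)\rho(\mbDelta)$ and $\lambda_{\max}(\mbLambda^{-1/2}\mbQ\mbLambda^{-1/2}) \le 1 + \kappa(\mbQ)(\rho(\mbDelta) + \rho(\mbDelta)^2)$ as claimed.
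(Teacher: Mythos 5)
Your reduction coincides exactly with the paper's: the expansion $\mbLambda^{-1/2}\mbQ\mbLambda^{-1/2} = \mbI + (\mbA + \mbA^{T}) + \mbA\mbA^{T}$ with $\mbA = \mbLambda^{-1/2}\mbDelta\mbLambda^{1/2}$ (the paper's $\mbDeltat$), the Weyl-type bounds that drop the positive semidefinite term on the lower side and retain it on the upper side, and the estimate $\norm[2]{\mbA}^{2} \leq \kappa(\mbQ)\rho(\mbDelta)^{2}$ via normality of $\mbDelta$ are all correct and are precisely the paper's first steps. But the proof is not complete: everything hinges on the single inequality $\norm[2]{\mbA + \mbA^{T}} \leq \kappa(\mbQ)\rho(\mbDelta)$, and you leave it open (``once it is done\ldots''). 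The route you sketch for it does not close as stated. The decomposition $\mbDelta = \mbS + \mathbf{K}$ with $\norm[2]{\mbS} \leq \tfrac{1}{2}\rho(\mbDelta)^{2}$ is fine, but the decisive claim --- that the Schur product of the skew part with the multiplier $\sqrt{\lambda_{j}/\lambda_{i}} - \sqrt{\lambda_{i}/\lambda_{j}}$ has spectral norm controlled by the largest \emph{entry} of that multiplier times $\norm[2]{\mathbf{K}}$ --- is not a theorem: the norm of a Schur multiplier is not bounded by its largest entry in general (triangular truncation is the standard counterexample), and for this particular rank-two multiplier the obvious factorization $\mbLambda^{-1/2}\mathbf{K}\mbLambda^{1/2} - \mbLambda^{1/2}\mathbf{K}\mbLambda^{-1/2}$ only returns the triangle-inequality bound $2\sqrt{\kappa(\mbQ)}\,\norm[2]{\mathbf{K}}$, i.e., exactly the factor of two you are trying to remove. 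As you observe, the crude bound $2\norm[2]{\mbA}$ does settle the case $\kappa(\mbQ) \geq 4$; the unresolved regime is $\kappa(\mbQ) < 4$, which is precisely the well-conditioned regime where the lemma matters most.

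For what it is worth, the paper closes this step by an entirely different mechanism. Since $\mbV = \mbI + \mbDelta$ is orthogonal, $\mbDelta$ is normal and unitarily diagonalizable, $\mbDelta = \mbU\mbPsi\mbU^{H}$, with eigenvalues lying on the arc $\{e^{i\theta} - 1\}$ within distance $\rho(\mbDelta)$ of the origin; consequently $\mbA = \mbUt\mbPsi\mbUt^{-1}$ with $\mbUt = \mbLambda^{-1/2}\mbU$. A Bauer--Fike-type perturbation theorem, applied to $\mbA + \mbA^{T}$ viewed as a perturbation of $\mbA$ by $\mbA^{T}$, places every (necessarily real) eigenvalue of $\mbA + \mbA^{T}$ within distance $\norm[2]{\mbUt^{-1}\mbA^{T}\mbUt} \leq \kappa(\mbQ)\rho(\mbDelta)$ of that arc; the geometry of the arc then gives $\lambda_{\max}(\mbA+\mbA^{T}) \leq \kappa(\mbQ)\rho(\mbDelta)$ immediately (the arc has nonpositive real part) and $\lambda_{\min}(\mbA+\mbA^{T}) \geq -\kappa(\mbQ)\rho(\mbDelta)$ using the hypothesis $\kappa(\mbQ)\rho(\mbDelta) < 1$. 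To salvage your framing you would need to actually prove a bound on the relevant Schur multiplier norm; as written, the key inequality is asserted rather than established.
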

\begin{proof}
Expanding $\mbLambda^{-1/2} \mbQ \mbLambda^{-1/2}$ yields $\mbI + \mbDeltat + \mbDeltat^{T} + \mbDeltat \mbDeltat^{T}$, where $\mbDeltat = \mbLambda^{-1/2} \mbDelta \mbLambda^{1/2}$.  The eigenvalues of $\mbLambda^{-1/2} \mbQ \mbLambda^{-1/2}$ can then be bounded by 
\begin{subequations}
\begin{align}
\lambda_{\min}(\mbLambda^{-1/2} \mbQ \mbLambda^{-1/2}) &\geq 1 + \lambda_{\min}(\mbDeltat + \mbDeltat^{T}),\label{eqn:QtildeEigMin}\\
\lambda_{\max}(\mbLambda^{-1/2} \mbQ \mbLambda^{-1/2}) &\leq 1 + \lambda_{\max}(\mbDeltat + \mbDeltat^{T}) + \lambda_{\max}(\mbDeltat \mbDeltat^{T}),\label{eqn:QtildeEigMax}
\end{align}
\end{subequations}
noting that $\mbDeltat \mbDeltat^{T}$ is positive semidefinite in \eqref{eqn:QtildeEigMin}.  The rightmost term in \eqref{eqn:QtildeEigMax} can be bounded using the sub-multiplicative property of the spectral norm \cite{hornjohnson1994}:
\[
\lambda_{\max}(\mbDeltat \mbDeltat^{T}) = \norm[2]{\mbDeltat^{T}}^{2}
\leq \norm[2]{\mbLambda^{1/2}}^{2} \norm[2]{\mbDelta}^{2} \norm[2]{\mbLambda^{-1/2}}^{2}
= \lambda_{\max}(\mbQ) \rho(\mbDelta)^{2} \lambda_{\min}^{-1}(\mbQ) = \kappa(\mbQ) \rho(\mbDelta)^{2}.
\]

To bound the eigenvalues of $\mbDeltat + \mbDeltat^{T}$, we make use of a diagonalization of $\mbDeltat$.  Given that $\mbV$ is orthogonal, it has unit-modulus eigenvalues and can be diagonalized by a unitary matrix $\mbU$.  From the relations $\mbDelta = \mbV - \mbI$ and $\rho(\mbDelta) < 1/\kappa(\mbQ)$, we see that $\mbDelta$ can be diagonalized as $\mbDelta = \mbU \mbPsi \mbU^{H}$, where the eigenvalues $\psi_{n}$ of $\mbDelta$ lie on the highlighted arc in Fig.~\ref{fig:eigDelta}.  It follows that $\mbDeltat = \mbUt \mbPsi \mbUt^{-1}$ with $\mbUt = \mbLambda^{-1/2} \mbU$.  

\begin{figure}[ht]
\centering
\psfrag{1}[][]{$1$}
\psfrag{-1}[][]{$-1$}
\psfrag{rho}[][]{$\rho(\mbDelta)$}
\includegraphics[width=0.55\columnwidth]{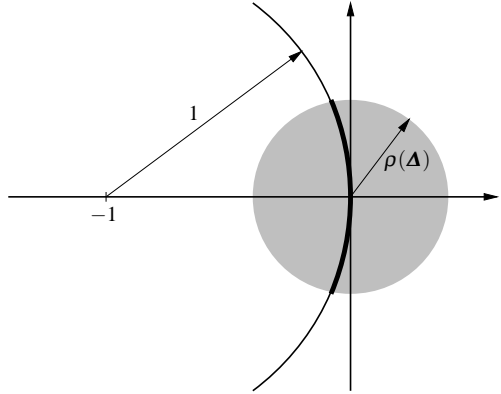}
\caption{The dark segment of the arc indicates the set of possible locations in the complex plane for the eigenvalues of $\mbDelta$ and $\mbDeltat$.}
\label{fig:eigDelta}
\end{figure}

We now invoke a theorem from \cite{hornjohnson1994}, which states that for any eigenvalue of $\mbDeltat + \mbDeltat^{T}$, there exists an eigenvalue of $\mbDeltat$ such that $\abs{\lambda(\mbDeltat + \mbDeltat^{T}) - \lambda(\mbDeltat)} \leq \norm[2]{\mbUt^{-1} \mbDeltat^{T} \mbUt}$.  Expanding the right-hand side of this inequality and using the sub-multiplicative property of spectral norms, we obtain 
\begin{align}
\abs{\lambda(\mbDeltat + \mbDeltat^{T}) - \lambda(\mbDeltat)} 
&\leq \norm[2]{\mbU^{H} \mbLambda^{1/2} \mbLambda^{1/2} \mbDelta^{T} \mbLambda^{-1/2} \mbLambda^{-1/2} \mbU}\notag\\
&\leq \norm[2]{\mbU^{H}} \norm[2]{\mbLambda} \norm[2]{\mbDelta^{T}} \norm[2]{\mbLambda^{-1}} \norm[2]{\mbU}\notag\\
&= \kappa(\mbQ) \rho(\mbDelta).\label{eqn:DeltaDeltaTEig}
\end{align}
The bound in \eqref{eqn:DeltaDeltaTEig} constrains the eigenvalues of $\mbDeltat + \mbDeltat^{T}$ to lie within a Euclidean distance of $\kappa(\mbQ) \rho(\mbDelta)$ from the arc in Fig.~\ref{fig:eigDelta}.  Furthermore, the symmetry of $\mbDeltat + \mbDeltat^{T}$ implies that its eigenvalues are real-valued.  It is clear then that $\lambda_{\max}(\mbDeltat + \mbDeltat^{T}) \leq \kappa(\mbQ) \rho(\mbDelta)$.  From Fig.~\ref{fig:eigDelta} and the assumption that $\kappa(\mbQ) \rho(\mbDelta) < 1$, it can also be seen that $\lambda_{\min}(\mbDeltat + \mbDeltat^{T})$ is minimized by setting $\lambda(\mbDeltat) = 0$ in \eqref{eqn:DeltaDeltaTEig} since all other choices for $\lambda(\mbDeltat)$ would yield more positive values for $\lambda_{\min}(\mbDeltat + \mbDeltat^{T})$.  Substituting the resulting bound $\lambda_{\min}(\mbDeltat + \mbDeltat^{T}) \geq -\kappa(\mbQ) \rho(\mbDelta)$ into \eqref{eqn:QtildeEigMin} completes the proof.
\qed\end{proof}

Theorem \ref{thm:diagRelaxNAA} can now be proved straightforwardly using previous results.

\begin{proof}[Theorem \ref{thm:diagRelaxNAA}]
%{\em Proof of Theorem \ref{thm:diagRelaxNAA}}.
As in the proof of Theorem \ref{thm:diagRelaxDiagDom}, we use Lemma \ref{lem:maxSumSmallestLBD0} to show that $K_{d} \leq \overline{K}_{\na}$, this time choosing $\mbD_{0} = \mbLambda$.  Combining Lemma \ref{lem:maxSumSmallestLBD0} with Lemma \ref{lem:diagRelaxNAA} then yields $E_{d}(K) \geq (1 - \kappa(\mbQ) \rho(\mbDelta)) S_{K} (\{ \lambda_{n}(\mbQ) c_{n}^{2} \})$, which implies that $K_{d} \leq \overline{K}_{\na}$ in light of \eqref{eqn:KoverbarNAA}.

To prove that $K^{\ast} \geq \underline{K}_{\na}$, we proceed as in the proof of Theorem \ref{thm:diagRelaxDiagDom} by fixing a specific subset $Z_{\na}(K)$ corresponding to the $K$ smallest $\lambda_{n}(\mbQ) c_{n}^{2}$.  This yields 
\begin{align*}
E_{0}(K) &\leq \mbc_{Z_{\na}(K)}^{T} \mbQ_{Z_{\na}(K)Z_{\na}(K)} \mbc_{Z_{\na}(K)}\\
&= \left( \mbLambda^{1/2} \begin{bmatrix} \mbc_{Z_{\na}(K)} \\ \0 \end{bmatrix} \right)^{T} \mbLambda^{-1/2} \mbQ \mbLambda^{-1/2} \left( \mbLambda^{1/2} \begin{bmatrix} \mbc_{Z_{\na}(K)} \\ \0 \end{bmatrix} \right)\\
&\leq \lambda_{\max}(\mbLambda^{-1/2} \mbQ \mbLambda^{-1/2}) S_{K}(\{\lambda_{n}(\mbQ) c_{n}^{2}\})\\
&\leq ( 1 + \kappa(\mbQ) (\rho(\mbDelta) + \rho(\mbDelta)^{2}) ) S_{K}(\{\lambda_{n}(\mbQ) c_{n}^{2}\}).
\end{align*}
In the second line above, the quadratic form has been rewritten in terms of the full matrix $\mbQ$ and then rescaled.  The last two lines result from the definition of $Z_{\na}(K)$ and Lemma \ref{lem:diagRelaxNAA}.  Combining the last inequality with \eqref{eqn:KunderbarNAA} yields $K^{\ast} \geq \underline{K}_{\na}$ as desired.
The proof of the bound on $\overline{K}_{\na} / \underline{K}_{\na}$ is similar to that in Theorem \ref{thm:diagRelaxEig}.
\qed\end{proof}

\section{Numerical evaluation}
\label{sec:numEx}

In this section, numerical results are presented to illustrate the performance of the two relaxations discussed in Sect.~\ref{sec:linRelax} and Sect.~\ref{sec:diagRelax}.  In Sect.~\ref{subsec:numExBounds}, the relaxations are compared on the basis of their approximation ratios under different conditions.  In Sect.~\ref{subsec:numExBB}, the relaxations are incorporated in a branch-and-bound algorithm to gauge their effectiveness in reducing the complexity of solving problem \eqref{eqn:sparseProb}.  

\subsection{Approximation ratios}
\label{subsec:numExBounds}

Randomly generated instances of problem \eqref{eqn:sparseProb} are used in this section to evaluate the approximation quality of the two relaxations.  While it was seen in Sect.~\ref{subsec:diagRelaxWorstCase} that neither relaxation dominates the other over all possible instances, the present comparison using random instances indicates that diagonal relaxations yield significantly stronger bounds in many situations, including but not limited to those analyzed in Sect.~\ref{subsec:diagRelaxEig}--\ref{subsec:diagRelaxNAA}.

In these experiments, the problem dimension $N$ is varied between $10$ and $150$ and the parameter $\gamma$ is normalized to $1$ throughout.  The continuous relaxation of each instance, and more specifically the dual problem \eqref{eqn:linRelaxDual}, is solved using the MATLAB function {\tt fmincon}.  A customized solver described in \cite[Sect.~3.5]{wei2011} is used for the diagonal relaxation; a general-purpose semidefinite optimization solver such as SDPT3 \cite{sdpt3} or SeDuMi \cite{sedumi1999} could also be used.  In addition, a feasible solution is obtained for each instance using the backward greedy selection method in \cite{wei2012a}.  The cost of this feasible solution is used as a substitute for the true optimal cost, which is difficult to compute given the large number of instances.  Numerical experience in \cite{wei2012a} however suggests that backward greedy selection is often optimal.  The approximation quality of each relaxation is measured by the ratio of the optimal cost of the relaxation to the cost of the feasible solution.  These ratios are denoted $R_{c}$ and $R_{d}$ for continuous and diagonal relaxations respectively; they are lower bounds on the true approximation ratios.  Note that we are returning to the original definition of approximation ratio in terms of the number of non-zero components and not the number of zero-valued components as in Sect.~\ref{subsec:diagRelaxEig}--\ref{subsec:diagRelaxNAA}.

In the first three experiments, the eigenvector matrix $\mbV$ of $\mbQ$ is chosen uniformly from the set of $N\times N$ orthogonal matrices (as assumed in Theorem \ref{thm:diagRelaxEigRand}).  The eigenvalues are drawn from different power-law distributions and then rescaled to match a specified condition number $\kappa(\mbQ)$ chosen from the values $\sqrt{N}$, $N$, $10N$, and $100N$.  Once $\mbQ$ is fixed, each component of the ellipsoid center $\mbc$ is drawn uniformly from the interval $[-\sqrt{(\mbQ^{-1})_{nn}}, \sqrt{(\mbQ^{-1})_{nn}}]$, in keeping with Assumption \ref{ass:feasTestSingleZero}.

Fig.~\ref{fig:approxRatios}(a) plots the approximation ratios $R_{c}$ and $R_{d}$ as functions of $N$ and $\kappa(\mbQ)$ for an eigenvalue distribution proportional to $1/\lambda$, which corresponds to a uniform distribution for $\log\lambda$.  Each point represents the average of $1000$ instances.  A $1/\lambda$ eigenvalue distribution is unbiased in the sense that it is invariant under matrix inversion (up to a possible overall scaling), an operation that maps the positive definite cone to itself.  The continuous relaxation approximation ratio $R_{c}$ does not vary much with $N$ or $\kappa(\mbQ)$.  In contrast, the diagonal relaxation approximation ratio $R_{d}$ is markedly higher for lower $\kappa(\mbQ)$, in agreement with Theorem \ref{thm:diagRelaxEig} and the geometric intuition in Fig.~\ref{fig:diagRelaxCond}.  Moreover, $R_{d}$ improves with increasing $N$ so that even for $\kappa(\mbQ) = 100N$ the diagonal relaxation outperforms the continuous relaxation for $N \geq 20$, with the difference being substantial at large $N$.  

\begin{figure}[ht]
\centering
\subfigure[]{
\includegraphics[width=0.49\columnwidth]{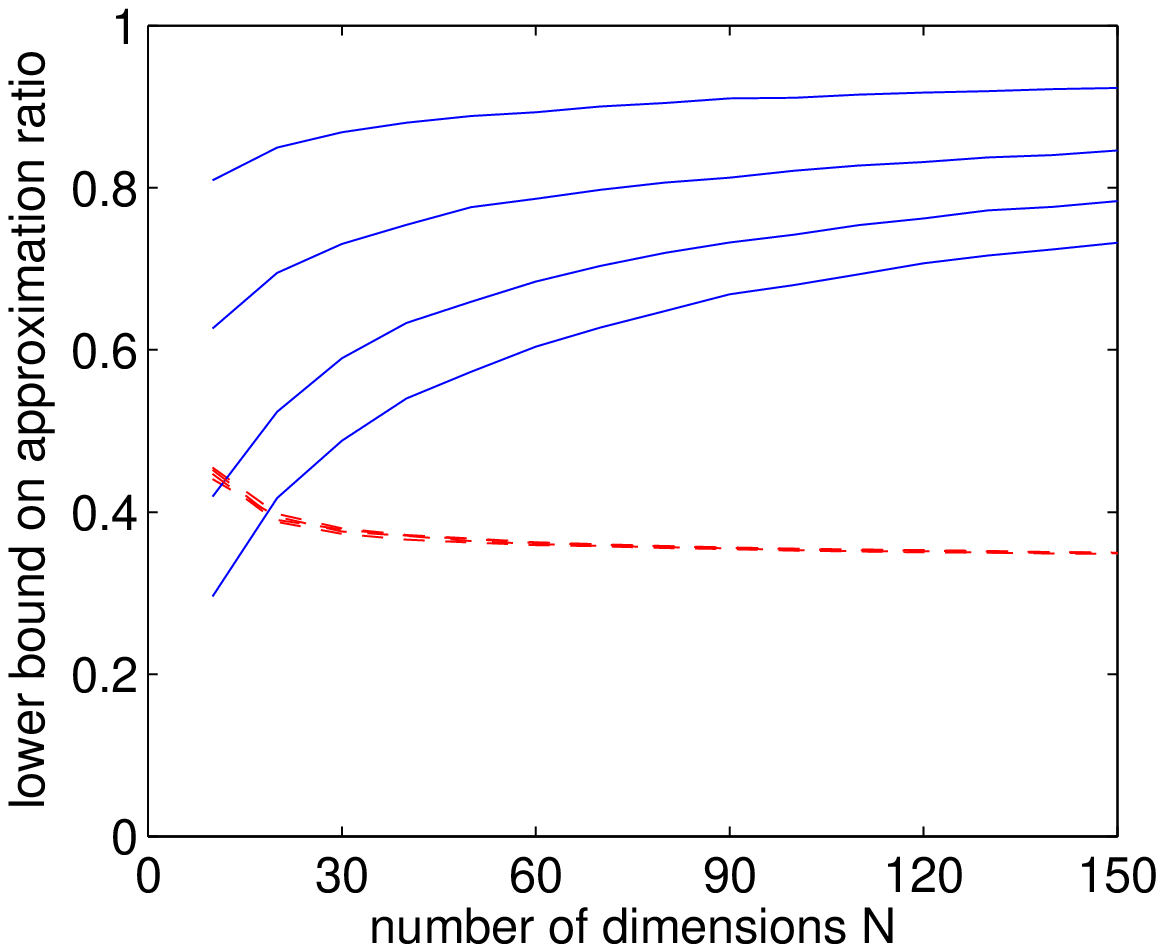}}
\subfigure[]{
\includegraphics[width=0.49\columnwidth]{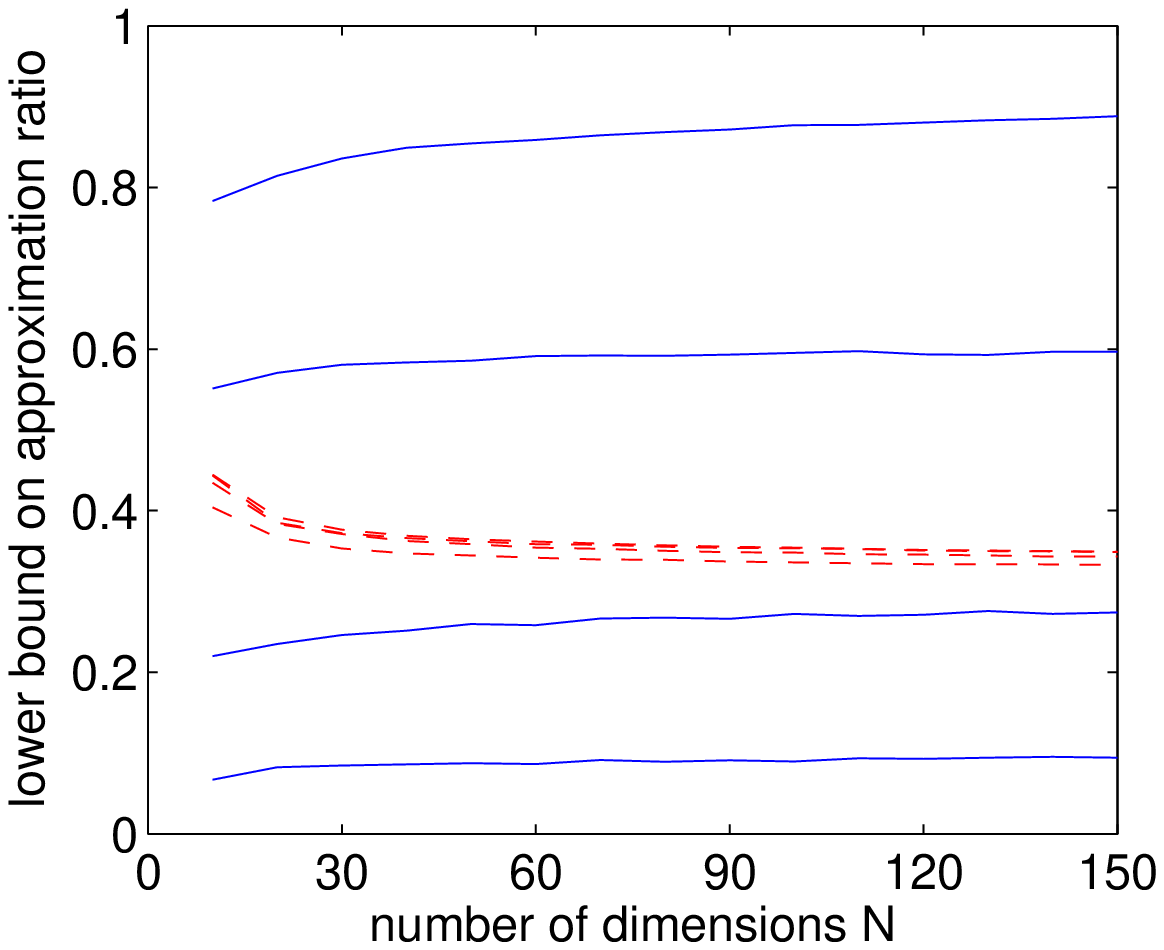}}
\subfigure[]{
\includegraphics[width=0.49\columnwidth]{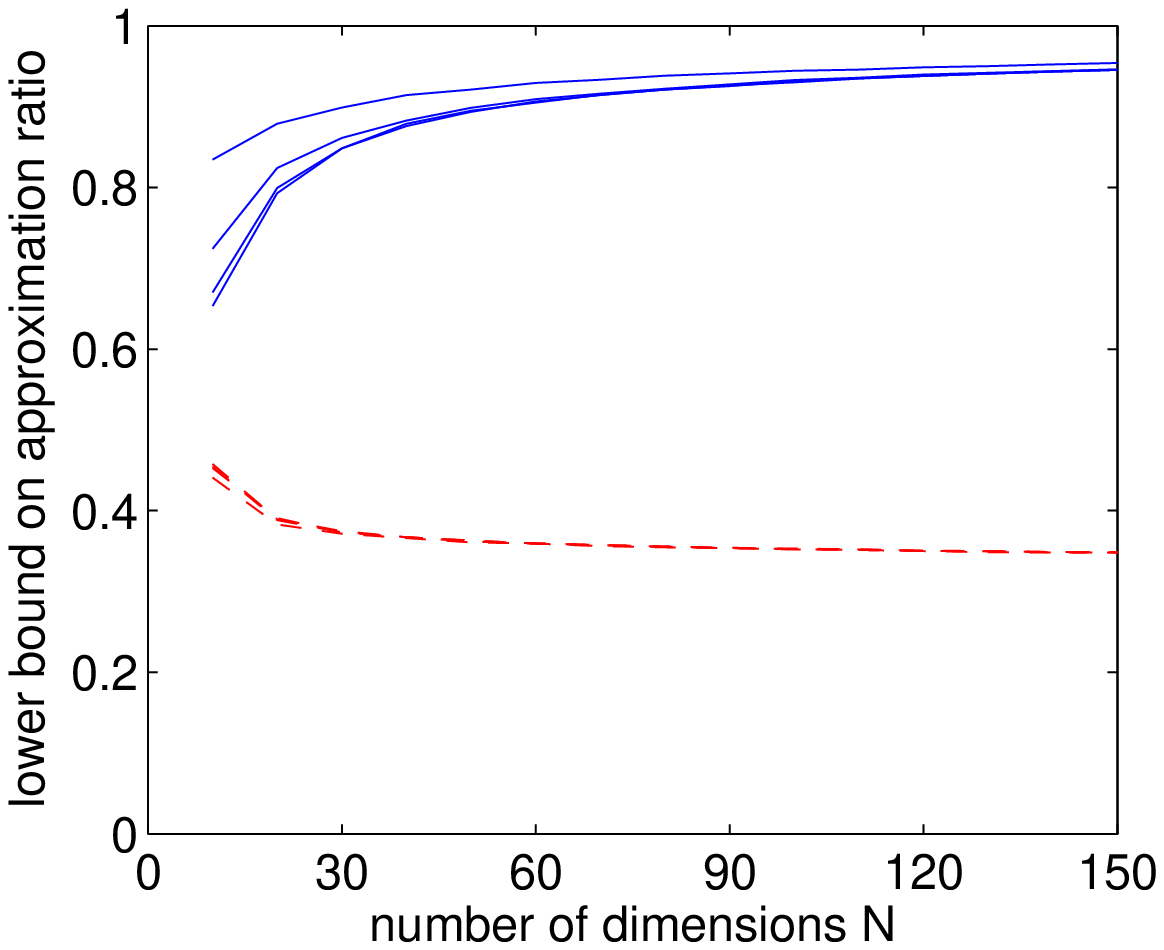}}
\subfigure[]{
\includegraphics[width=0.49\columnwidth]{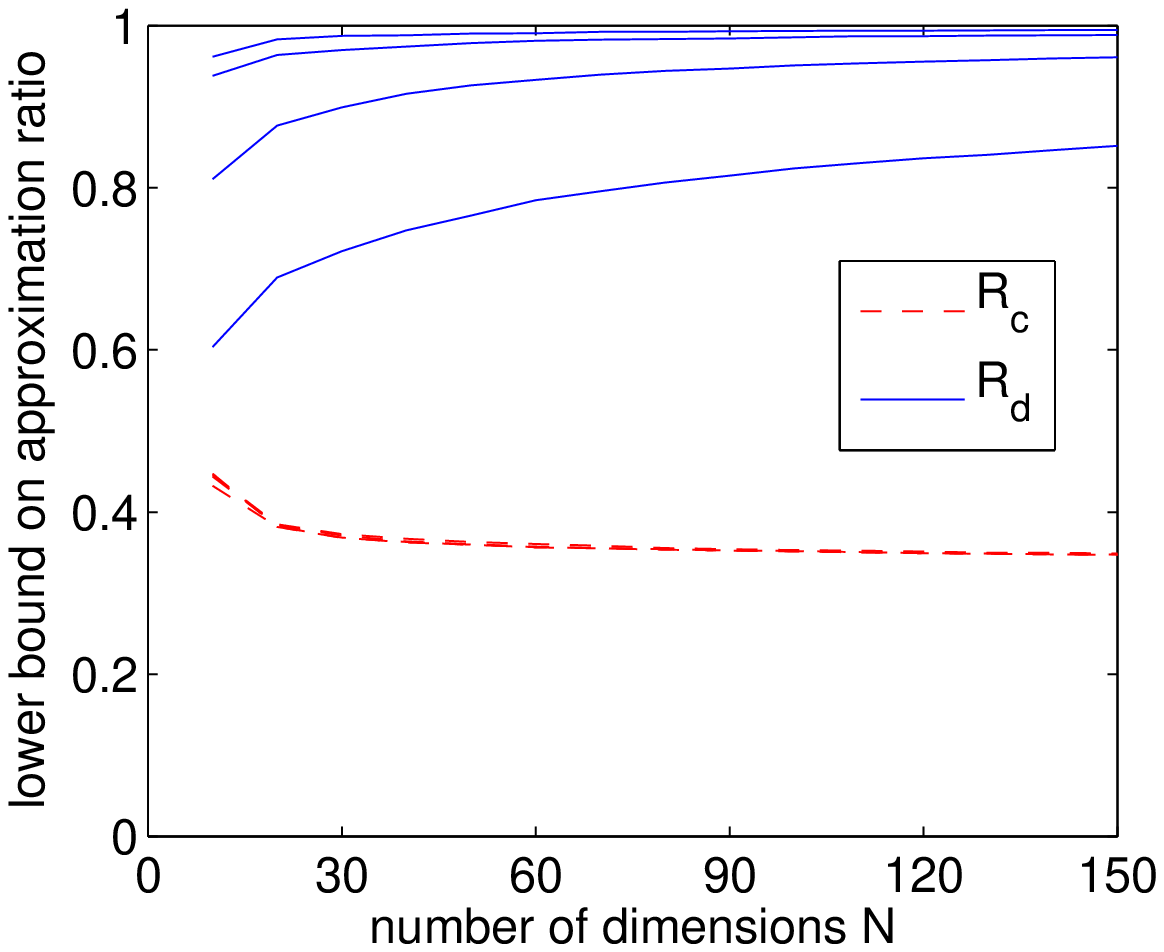}}
\caption{Average approximation ratios $R_c$ and $R_d$ for (a) a $1/\lambda$ eigenvalue distribution, (b) a uniform eigenvalue distribution, (c) a $1/\lambda^{2}$ eigenvalue distribution, and (d) unit diagonal entries and off-diagonal entries drawn uniformly from $[-a, a] / \sqrt{N}$.  In (a)--(c), $\kappa(\mbQ) = \sqrt{N}, N, 10N, 100N$ from top to bottom within each set of curves.  In (d), $a = 0.1, 0.2, 0.5, 0.8$ from top to bottom within each set of curves.} 
\label{fig:approxRatios}
\end{figure}

Figs.~\ref{fig:approxRatios}(b) and \ref{fig:approxRatios}(c) show average approximation ratios for a uniform eigenvalue distribution and a $1/\lambda^{2}$ distribution, the latter corresponding to a uniform distribution for the eigenvalues of $\mbQ^{-1}$.  Compared to a $1/\lambda$ distribution, a $1/\lambda^{2}$ distribution is more heavily weighted toward small values whereas a uniform distribution is less so.  Accordingly, each $R_{d}$ curve in Fig.~\ref{fig:approxRatios}(b) is lower than its counterpart in \ref{fig:approxRatios}(a) while the opposite is true in Fig.~\ref{fig:approxRatios}(c), in agreement with the dependence on the eigenvalue distribution in Theorem \ref{thm:diagRelaxEigRand}.  The effect of the condition number on $R_{d}$ is also more pronounced under a uniform eigenvalue distribution and less so under a $1/\lambda^{2}$ distribution.  The behavior of $R_{c}$ on the other hand is largely unchanged from Fig.~\ref{fig:approxRatios}(a).

In a fourth experiment, the diagonal entries of $\mbQ$ are normalized to $1$ while the off-diagonal entries are drawn uniformly from the interval $[-a, a] / \sqrt{N}$, where $a = 0.1, 0.2, 0.5, 0.8$.  With high probability, such matrices are diagonally dominant in the sense of \eqref{eqn:QdiagDom} for $a = 0.1, 0.2$, and are not positive definite for $a > 0.85$.  The vector $\mbc$ is generated as before based on the diagonal entries of $\mbQ^{-1}$.  %It suffices to consider positive values of $\rho$ since it can be shown that $\rho$ and $-\rho$ are equivalent in terms of the cardinality cost function.  Note also that $\mbQ$ is diagonally dominant in the sense of \eqref{eqn:QdiagDom} for $\rho \leq 1/3$.  
The average approximation ratios are shown in Fig.~\ref{fig:approxRatios}(d).  Similar to the condition number in Figs.~\ref{fig:approxRatios}(a)--(c), the parameter $a$ does not appear to have much effect on $R_{c}$.  For the diagonal relaxation, while Theorem \ref{thm:diagRelaxDiagDom} predicts a close approximation for $a = 0.1, 0.2$, the performance is still relatively good for $a = 0.8$.

The results in Fig.~\ref{fig:approxRatios} demonstrate that better bounds are achieved in many instances with diagonal relaxations than with continuous relaxations.  Furthermore, this can be true even when the condition number $\kappa(\mbQ)$ or the off-diagonal amplitude $a$ is high, whereas the analysis in Sect.~\ref{subsec:diagRelaxEig}--\ref{subsec:diagRelaxNAA} is more conservative.

\subsection{Branch-and-bound complexity}
\label{subsec:numExBB}

Next we consider the effect of the two relaxations on the complexity of a branch-and-bound solution to \eqref{eqn:sparseProb}.  For this purpose, the relaxations are incorporated into a basic MATLAB implementation of branch-and-bound, referred to as BB.  This algorithm is also compared to the mixed-integer programming solver CPLEX 12.4 \cite{cplex2012} as a point of reference.  %Nevertheless, as a preliminary evaluation, the algorithm variant employing diagonal relaxations is also compared to Bonmin \cite{}, a state-of-the-art open-source solver for mixed-integer nonlinear programs, and encouraging results are reported.  
The comparisons show that diagonal relaxations can significantly increase the efficiency of branch-and-bound.  It is also seen that a more specialized solver can outperform a sophisticated general-purpose solver in solving \eqref{eqn:sparseProb}.

Algorithm BB is based on the mixed integer formulation \eqref{eqn:MIP} and is summarized below.  Full details can be found in \cite{wei2012}.  The branching rule is to select the variable for which the margin in condition \eqref{eqn:feasTestSingleZero} is minimal.  This rule is similar to the maximum absolute value rule in \cite{bertsimas2009,bienstock1996} in that the $i_{n} = 0$ subproblem is more likely to be severely constrained.  The next node is chosen according to the ``best node'' rule, i.e., a node with a minimal lower bound.  Feasible solutions are generated by running the backward selection heuristic at every node.  To obtain lower bounds, condition \eqref{eqn:feasTestSingleZero} is checked at every node and bounds are updated as appropriate.  Variable elimination as described in Sect.~\ref{subsec:prelimElim} is employed to reduce subproblem dimensions.  For stronger lower bounds, either continuous or diagonal relaxations are solved, corresponding to two algorithm variants BB-C and BB-D.  Relaxations are solved only after constraining a variable to zero ($i_{n} = 0$ branch) and when the subproblem dimension is at least $20$.  In other cases, the increased computation does not seem to be justified by the improvement in bounds.

For CPLEX, the split-variable mixed integer formulation corresponding to \eqref{eqn:bn+-} and \eqref{eqn:linRelaxPrimal} is passed to the CPLEX MEX executable through the provided MATLAB interface.  Because of the relative inefficiency of CPLEX as seen below, BB is run first and the optimal solution is used to initialize CPLEX.  Given this initialization, CPLEX is instructed to emphasize optimality rather than feasibility, while all other options are set to their default values.  Preliminary experimentation with changing solver parameters did not yield any gains.  The experiments are run on a $2.4$ GHz quad-core Linux computer with $8$ GB of memory.  BB is generally not observed to use more than one core at a time; CPLEX however is able to continuously exploit all four cores.

Problem instances are generated randomly from the same four classes and in the same manner as in Sect.~\ref{subsec:numExBounds}, thus satisfying Assumption \ref{ass:feasTestSingleZero} in particular.  Table \ref{tbl:compareEig} shows the solution times and numbers of nodes for the first three classes in which the eigenvalues of $\mbQ$ are drawn from different distributions.  Each entry represents the average over $100$ instances.  For certain instance classes and solvers, the high computational complexity does not permit an accurate evaluation.  In these cases, the solution time is estimated by extrapolating from lower values of $N$; such estimates are marked by parentheses.

\begin{table}[ht]
\caption{Average computational complexity for different eigenvalue distributions.  Times in parentheses represent extrapolated values.} 
\label{tbl:compareEig}
\begin{center}
%\vskip -3mm
\small
\begin{tabular}{c|c|c||c|c|c||c|c|c}
%\hline
eig.~dist. & $\kappa(\mbQ)$ & $N$ & \multicolumn{3}{c||}{time [s]} & \multicolumn{3}{c}{number of nodes} \\
\hline
& & & BB-C & BB-D & CPLEX & BB-C & BB-D & CPLEX \\
\hline
$1/\lambda$ & $N$ & $40$ & $1.24$ & $0.70$ & $18.38$ & $810$ & $599$ & $5979$ \\
%\hline
& & $70$ & $662$ & $75$ & $2146$ & $2.60\times10^{4}$ & $0.69\times10^{4}$ & $2.72\times10^{5}$ \\
%\hline
& & $100$ & $(4\times10^{5})$ & $1.09\times10^{4}$ & $(2\times10^{5})$ & & $7.40\times10^{4}$ & \\
%\hline
& $100N$ & $40$ & $0.84$ & $0.67$ & $(2\times10^{5})$ & $628$ & $611$ & \\
%\hline
& & $70$ & $334$ & $213$ & & $1.85\times10^{4}$ & $1.46\times10^{4}$ & \\
%\hline
& & $100$ & $(1\times10^{5})$ & $(5\times10^{4})$ & & & & \\
%\hline
\hline
uniform & $N$ & $40$ & $1.09$ & $0.72$ & $15.28$ & $689$ & $616$ & $5500$ \\
%\hline
& & $70$ & $261$ & $98$ & $1159$ & $1.77\times10^{4}$ & $1.01\times10^{4}$ & $2.03\times10^{5}$ \\
%\hline
& & $100$ & $(7\times10^{4})$ & $1.43\times10^{4}$ & $(7\times10^{4})$ & & $1.36\times10^{5}$ & \\
%\hline
& $100N$ & $40$ & $0.18$ & $0.19$ & $(3\times10^{4})$ & $189$ & $189$ & \\
%\hline
& & $70$ & $3.64$ & $2.95$ & & $1.69\times10^{3}$ & $1.69\times10^{3}$ & \\
%\hline
& & $100$ & $98.6$ & $77.1$ & & $9.71\times10^{3}$ & $9.95\times10^{4}$ & \\
%\hline
\hline
$1/\lambda^{2}$ & $N$ & $40$ & $1.93$ & $0.51$ & $23.65$ & $1111$ & $438$ & $6929$ \\
%\hline
& & $70$ & $2949$ & $12$ & $3139$ & $4.72\times10^{4}$ & $0.19\times10^{4}$ & $3.44\times10^{5}$ \\
%\hline
& & $100$ & $(6\times10^{6})$ & $633$ & $(4\times10^{5})$ & & $1.40\times10^{4}$ & \\
%\hline
& $100N$ & $40$ & $1.12$ & $0.40$ & $(3\times10^{5})$ & $742$ & $328$ & \\
%\hline
& & $70$ & $1756$ & $14$ & & $4.19\times10^{4}$ & $0.23\times10^{4}$ & \\
%\hline
& & $100$ & $(1\times10^{6})$ & $848$ & & & $1.60\times10^{4}$ & \\
%\hline
\end{tabular}
\end{center}
%\vskip -7mm
\end{table}

Considering first the comparison between BB-C and BB-D, it is clear from Table \ref{tbl:compareEig} that diagonal relaxations can significantly decrease complexity.  The gains generally increase with the dimension $N$ and can reach several orders of magnitude for the $1/\lambda^{2}$ eigenvalue distribution, which as seen in Sect.~\ref{subsec:numExBounds} is most favorable toward diagonal relaxations.  Even for a uniform distribution and $\kappa(\mbQ) = 100N$, BB-D is slightly more efficient than BB-C, in apparent contradiction with the comparison in Fig.~\ref{fig:approxRatios}(b).  This can be explained by noting that Fig.~\ref{fig:approxRatios}(b) represents the average approximation ratios for the root node whereas subproblems may have more non-uniform eigenvalue distributions and lower condition numbers.  It is also interesting that instances in this class appear to be the easiest to solve.

The comparison with CPLEX in Table \ref{tbl:compareEig} shows the value of a more specialized algorithm for solving \eqref{eqn:sparseProb}, as has been observed by others \cite{bertsimas2009,gao2011}.  This is in spite of the fact that CPLEX is run as a compiled executable with full multicore capabilities.  Indeed, the advantage extends to the BB-C variant at low $N$, although the margin decreases at higher $N$.  Note also that CPLEX has difficulty with the more poorly-conditioned instances.  Given CPLEX's use of techniques beyond pure branch-and-bound, it is difficult to identify precisely the reasons for its relative inefficiency.  One factor is the poor performance of the heuristic used by CPLEX relative to the backward selection heuristic in BB.  For this reason, CPLEX is initialized with the BB solution in the experiments.  As for lower bounds, it is likely that checking condition \eqref{eqn:feasTestSingleZero} confers significant benefits because of the ability to eliminate many infeasible subproblems and improve bounds incrementally with minimal computation, and also because of the subsequent reduction in dimension.  Another difference is the frequency at which relaxations are solved since in BB, some effort is made to avoid solving unprofitable relaxations.

Table \ref{tbl:compareDiagDom} shows a complexity comparison for $\mbQ$ matrices with unit diagonal entries and uniformly distributed off-diagonal entries, corresponding to Fig.~\ref{fig:approxRatios}(d) in Sect.~\ref{subsec:numExBounds}.  The difference between BB-C and BB-D in this case is as dramatic as it is for the $1/\lambda^{2}$ eigenvalue distribution in Table \ref{tbl:compareEig}.  The performance of CPLEX is similar to its performance in Table \ref{tbl:compareEig} for $\kappa(\mbQ) = N$.  It is clear that BB-D remains the best option.

\begin{table}[ht]
\caption{Average computational complexity for different off-diagonal amplitudes $a$.  Times in parentheses represent extrapolated values.} 
\label{tbl:compareDiagDom}
\begin{center}
%\vskip -3mm
\small
\begin{tabular}{c|c||c|c|c||c|c|c}
%\hline
$a$ & $N$ & \multicolumn{3}{c||}{time [s]} & \multicolumn{3}{c}{number of nodes} \\
\hline
& & BB-C & BB-D & CPLEX & BB-C & BB-D & CPLEX \\
\hline
$0.2$ & $40$ & $1.66$ & $0.13$ & $26.87$ & $1128$ & $93$ & $8698$ \\
%\hline
& $70$ & $2941$ & $1.1$ & $4107$ & $6.76\times10^{4}$ & $151$ & $4.85\times10^{5}$ \\
%\hline
& $100$ & $(7\times10^{6})$ & $2.6$ & $(9\times10^{5})$ & & $187$ & \\
%\hline
$0.8$ & $40$ & $1.56$ & $0.76$ & $24.04$ & $849$ & $543$ & $7896$ \\
%\hline
& $70$ & $577$ & $50$ & $2853$ & $3.21\times10^{4}$ & $0.57\times10^{4}$ & $3.86\times10^{5}$ \\
%\hline
& $100$ & $(4\times10^{5})$ & $4.86\times10^{3}$ & $(4\times10^{5})$ & & $7.51\times10^{4}$ & \\
%\hline
\end{tabular}
\end{center}
%\vskip -7mm
\end{table}

\section{Conclusion and future work}
\label{sec:concl}

Two relaxations of a quadratically-constrained cardinality minimization problem \eqref{eqn:sparseProb} were investigated, the first being the continuous relaxation of a mixed integer formulation, the second an optimized diagonal relaxation based on a simple special case of the problem.  An absolute upper bound on the optimal cost of the continuous relaxation suggests that it yields relatively weak approximations.  In computational experiments, diagonal relaxations were seen to result in stronger bounds and significantly reduced complexity in solving \eqref{eqn:sparseProb} via branch-and-bound.  Substantial gains were also observed relative to the general-purpose solver CPLEX.  To support these numerical results, this paper analyzed the approximation properties of diagonal relaxations, providing general insight and establishing guarantees in terms of the eigenvalues of the matrix $\mbQ$ and in the diagonally dominant and nearly coordinate-aligned cases.

Given the interest in generalizations of \eqref{eqn:sparseProb} in portfolio optimization, it is hoped that the analysis in this paper could be extended to these more general formulations and to other relaxations such as the perspective relaxation \cite{frangioni2006,gunluk2010,zheng2012}.  In addition, the positive experience with diagonal relaxations motivates further exploration of relaxations based on other efficiently solvable special cases, for example those in \cite{das2008}.

\begin{comment}
\section{Introduction}
\label{intro}
Your text comes here. Separate text sections with
\section{Section title}
\label{sec:1}
Text with citations \cite{RefB} and \cite{RefJ}.
\subsection{Subsection title}
\label{sec:2}
as required. Don't forget to give each section
and subsection a unique label (see Sect.~\ref{sec:1}).
\paragraph{Paragraph headings} Use paragraph headings as needed.
\begin{equation}
a^2+b^2=c^2
\end{equation}

% For one-column wide figures use
\begin{figure}
% Use the relevant command to insert your figure file.
% For example, with the graphicx package use
  \includegraphics{example.eps}
% figure caption is below the figure
\caption{Please write your figure caption here}
\label{fig:1}       % Give a unique label
\end{figure}
%
% For two-column wide figures use
\begin{figure*}
% Use the relevant command to insert your figure file.
% For example, with the graphicx package use
  \includegraphics[width=0.75\textwidth]{example.eps}
% figure caption is below the figure
\caption{Please write your figure caption here}
\label{fig:2}       % Give a unique label
\end{figure*}
%
% For tables use
\begin{table}
% table caption is above the table
\caption{Please write your table caption here}
\label{tab:1}       % Give a unique label
% For LaTeX tables use
\begin{tabular}{lll}
\hline\noalign{\smallskip}
first & second & third  \\
\noalign{\smallskip}\hline\noalign{\smallskip}
number & number & number \\
number & number & number \\
\noalign{\smallskip}\hline
\end{tabular}
\end{table}
\end{comment}

\begin{acknowledgements}
The author thanks Pablo A.~Parrilo, Alan V.~Oppenheim and Vivek K.~Goyal for helpful discussions that shaped this work.
\end{acknowledgements}

% BibTeX users please use one of
%\bibliographystyle{spbasic}      % basic style, author-year citations
\bibliographystyle{spmpsci}      % mathematics and physical sciences
\bibliography{IEEEabrv,sparseQuad,optimization}   % name your BibTeX data base

% Non-BibTeX users please use
%\begin{thebibliography}{}
%
% and use \bibitem to create references. Consult the Instructions
% for authors for reference list style.
%
%\bibitem{RefJ}
% Format for Journal Reference
%Author, Article title, Journal, Volume, page numbers (year)
% Format for books
%\bibitem{RefB}
%Author, Book title, page numbers. Publisher, place (year)
% etc
%\end{thebibliography}

\end{document}